\newtheorem{thm}{Theorem}[section]
\newtheorem{cor}[thm]{Corollary}
\newtheorem{lem}[thm]{Lemma}
\newtheorem{prop}[thm]{Proposition}
\theoremstyle{definition}
\newtheorem{defin}[thm]{Definition}
\newtheorem{rem}[thm]{Remark}
\newtheorem{example}[thm]{Example}
\numberwithin{equation}{section}
\date{\today}
\newcommand{\lin}{\operatorname{span}}
\newcommand{\dif}{\,\mathrm{d}}
\DeclareMathOperator{\card}{card}
\newcommand{\charfun}{\ensuremath{\mathbbm 1}} 
\newcommand{\sm}[1]{\ensuremath{#1'}}  
\newcommand{\la}[1]{\ensuremath{#1''}} 
\begin{document}
\title[Properties of local orthonormal systems: Unconditionality in $L^p$]{Properties of local orthonormal systems \\
Part I:
Unconditionality in $L^p$, $1<p<\infty$}
\author[J. Gulgowski]{Jacek Gulgowski}
\address{Faculty of Mathematics, Physics and Informatics, University of Gda\'nsk,
ul. Wita Stwosza 57, 80-308 Gda\'nsk , Poland}
\email{ jacek.gulgowski@ug.edu.pl}

\author[A.Kamont]{Anna Kamont }
\address{Institute of Mathematics, Polish Academy of Sciences, Branch in Gda\'nsk, ul. Abrahama 18, 81-825 Sopot, Poland}
\email{Anna.Kamont@impan.pl}

\author[M. Passenbrunner]{Markus Passenbrunner}
\address{Institute of Analysis, Johannes Kepler University Linz, Austria, 4040 Linz, Altenberger Strasse 69}
\email{markus.passenbrunner@jku.at, markus.passenbrunner@gmail.com}

\keywords{Local orthonormal systems, Unconditionality, Greedy bases, General measure spaces}
\subjclass[2020]{42C05, 42C40, 46E30}

\begin{abstract}
	Assume that we are given a filtration $(\mathscr F_n)$ on a probability space
	$(\Omega,\mathscr F,\mathbb P)$ of the form that each $\mathscr F_n$ is generated
	by the partition of one atom of $\mathscr F_{n-1}$ into two atoms of $\mathscr F_n$
	having positive measure. Additionally, assume  that we are given a finite-dimensional linear space $S$ of
	$\mathscr F$-measurable, bounded functions on $\Omega$ so that on each 
	atom $A$ of any $\sigma$-algebra $\mathscr F_n$, all $L^p$-norms of functions
	in $S$ are comparable independently of $n$ or $A$.
	Denote by
	$S_n$ the space of functions that are given locally, on atoms of $\mathscr F_n$,
	by functions in $S$ and by $P_n$ the orthoprojector (with respect to the inner 
	product in $L^2(\Omega)$) onto $S_n$.

	Since $S = \lin\{\charfun_\Omega\}$ satisfies the above assumption and $P_n$ is 
	then the conditional expectation $\mathbb E_n$ with respect to $\mathscr F_n$, for such 
	filtrations, martingales $(\mathbb E_n f)$ are special cases of our setting.
	
	We show in this article that certain convergence results
	that are known for martingales (or rather martingale differences) are also 
	true in the general framework described above. More precisely, we show that   
	the differences $(P_n - P_{n-1})f$ form an unconditionally convergent series and are 
	democratic in $L^p$ for $1<p<\infty$. This implies that those differences 
	form a greedy basis in $L^p$-spaces for $1<p<\infty$.
\end{abstract}
\maketitle

\section{Introduction}\label{sec:intro}
Let $(\Omega,\mathscr F,\mathbb P) = (\Omega,\mathscr F,|\cdot|)$ be a probability space
and let $(\mathscr F_n)_{n\geq 0}$ 
be a filtration in $\mathscr F$. The space $L^p(\Omega,\mathscr F,\mathbb P)$ will further be denoted by $L^p$. For $1<p<\infty$, given a function $f\in L^p$ and denoting by $\mathbb E_n$
the conditional expectation of $f$ with respect to the $\sigma$-algebra $\mathscr F_n$, the 
martingale differences $(\mathbb E_n - \mathbb E_{n-1})f$ are orthogonal to each 
other and the series of these differences $\sum_{n\geq 1}(\mathbb E_n - \mathbb E_{n-1})f$  converge unconditionally in $L^p$ by D.~Burkholder's inequality. 
We use \cite{Pisier2016} as our reference for the latter inequality and for other basic facts about martingales.
Assume now that $(\mathscr F_n)_{n\geq 0}$ is a \emph{binary} filtration, meaning that
\begin{enumerate}
	\item $\mathscr F_0 = \{\emptyset, \Omega\}$,
\item for each $n\geq 1$,
$\mathscr F_{n}$ is generated by $\mathscr F_{n-1}$ and the
subdivision of exactly one atom $A_n$ of $\mathscr F_{n-1}$ into two atoms
$\sm{A_n},\la{A_n}$ of $\mathscr F_{n}$ satisfying $|\la{A_n}| \geq |\sm{A_n}|>0$. 
\end{enumerate}
For notational purpose, set $\mathscr F_{-1} := \mathscr F_0 = \{\emptyset, \Omega\}$ and $A_0 = \Omega$.
Moreover, denote the collection of all atoms in $\mathscr F_n$ by $\mathscr A_n$ and define $\mathscr A
 = \cup_n \mathscr A_n$.
 In this case, the range $S_n$ of the conditional expectation $\mathbb E_n$ consists of the 
 piecewise constant functions. This precisely means that  $f\in S_n$ if and only if for all $A\in\mathscr A_n$
 there exists $g\in \lin\{\charfun_\Omega\}$ with $\charfun_A f = \charfun_A g$.
 Here we denote by $\charfun_A$ the characteristic function of the 
set $A$  given by $\charfun_A(x)= 1$ if $x\in A$ and $\charfun_A(x)=0$ otherwise.
Observe that clearly, for each atom $A\in\mathscr A$, we have
\begin{equation}\label{eq:const}
	|\{ \omega\in A : |f(\omega)| = \|f\|_{A}
	\}| = |A|,\qquad f\in\lin\{ \charfun_\Omega\},
\end{equation}
where by $\|f\|_A$ we denote the $L^\infty$-norm of $f$ on the set $A$.

We now switch our viewpoint and assume that instead of $\lin\{\charfun_\Omega\}$, 
we are given an arbitrary vector space $S\subseteq L^\infty$ 
of finite dimension $k$.
Instead of \eqref{eq:const}, we assume  for each $A\in\mathscr A$ the inequality
	\begin{equation}\label{eq:L1Linfty}
			|\{ \omega\in A : |f(\omega)| \geq c_1\|f\|_{A}
	\}| \geq c_2|A|,\qquad f\in S
\end{equation}
for some constants $c_1,c_2\in (0,1]$ independent of $A$.
Inequality \eqref{eq:L1Linfty} for fixed $A$ is 
equivalent to the comparability of $\|f\|_A$ and the mean value $|A|^{-1}\int_A|f|\dif\mathbb P$
for all $f\in S$. 
Indeed, if we set $U :=\{ \omega\in A : |f(\omega)| \geq c_1\|f\|_{A} \}$,
this can be seen by splitting the integral $\int_A |f|\dif\mathbb P$ into 
the two integrals over $U$ and $A\setminus U$, using the available 
estimates, and choosing the respective constants accordingly.

For some explicit examples of spaces $S$ and collections of atoms $\mathscr A$
 satisfying this inequality, we refer to Section~\ref{sec:examples}. 

Given any subspace $V\subset L^1$ and any measurable set $A\in\mathscr F$, we let 
\[
	V(A) = \{ f\cdot \charfun_A : f\in V\}.	
\]
Let $S_{-1} = \{0\}$ and, for each $n\geq 0$,
\[
S(\mathscr F_n):=S_n := \bigoplus_{A\in\mathscr A_n} S(A).
\]
Thus, $S(\mathscr F_n)=S_n$ consists of those functions that are piecewise (on each atom of $\mathscr F_n$)
 contained in $S$.
Letting $P_n$ be the orthoprojector onto $S_n$ (with respect to the inner product 
on the space $L^2$), we show in this article 
that the series $\sum_{n\geq 1}(P_n - P_{n-1})f$ of corresponding differences  also converge unconditionally in $L^p$
for $f\in L^p$, $1<p<\infty$. 

We even show a more general result. To describe it, 
observe first  that the dimension of $S_n / S_{n-1}$ generally is bigger than one.
Therefore, we consider the following setting.
For each $n\geq 0$, we assume that we are given 
a strictly increasing sequence of spaces $S_{n-1} =
V_{n,0} \subset V_{n,1} \subset \cdots \subset V_{n,\ell_n} = S_{n}$,
such that $\dim V_{n,j} / V_{n,j-1} = 1$ for each $j=1,\ldots,\ell_n$ where $\ell_n = \dim S_n - \dim S_{n-1}$,
which clearly satisfies $0\leq \ell_n\leq k$.
Denote by $P_{n,j}$ the orthoprojector onto $V_{n,j}$.
We re-enumerate the spaces $(V_{n,j})$ for $n\geq 0$ and $0\leq j\leq \ell_n - 1$
 so that we have a strictly increasing sequence of spaces $(V_m)$. 
 Denote by $R_m$ the orthoprojector onto
$V_m$.
For an integrable function $f\in
L^1(\Omega)$ and for an index $m$, set
\[
	f_m := R_m f, \qquad df_m = f_m - f_{m-1}.
\]

Then we show that the series of these differences converge unconditionally in $L^p$ for $1<p<\infty$ 
independently of the choice of spaces $(V_{n,j})$.
\begin{thm}\label{thm:uncond_intro}
	Let $(\Omega,\mathscr F,\mathbb P)$ be a probability space, $(\mathscr F_n)$
	a binary filtration and $S$ a finite dimensional linear space of $\mathscr F$-measurable
	functions on $\Omega$ satisfying \eqref{eq:L1Linfty} for every atom $A\in\mathscr A$.

	Then, for every $1<p<\infty$ there exists a positive constant $C_p$ such that for every 
	$f\in L^p$ and all positive integers $n$,
	\[
		\big\| \sum_{m\leq n} \pm df_m \big\|_p \leq C_p \| f\|_p.		
	\]
\end{thm}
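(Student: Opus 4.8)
The natural strategy is to reduce the unconditionality of the series $\sum_m df_m$ to the well-understood unconditionality of martingale differences, via a comparison of the orthoprojectors $R_m$ with the conditional expectations $\mathbb E_n$. The key structural input is condition \eqref{eq:L1Linfty}, which forces the orthoprojector $P_n$ onto $S_n$ to be \emph{localized}: since on each atom $A\in\mathscr A_n$ the $L^1$-, $L^2$- and $L^\infty$-norms on $S(A)$ are all comparable with constants independent of $A$ and $n$, a Gram-matrix/duality argument shows that the $L^2$-normalized dual functionals of a basis of $S(A)$ are bounded in $L^\infty(A)$ uniformly. Consequently $P_n$ is bounded on $L^p$ uniformly in $n$ and, more importantly, $P_n$ has an integral kernel $K_n(x,y)=\sum_{A\in\mathscr A_n}\charfun_A(x)\charfun_A(y)k_A(x,y)$ with $\|k_A\|_\infty\lesssim 1/|A|$, so that $P_n$ enjoys the same kind of $L^1\!-\!L^\infty$ off-diagonal decay that $\mathbb E_n$ does. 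I would first establish this uniform boundedness and kernel bound as a lemma.

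Next I would exploit the nesting $S_{n-1}=V_{n,0}\subset\cdots\subset V_{n,\ell_n}=S_n$ to write each intermediate difference as a one-dimensional projection: $R_m-R_{m-1}$ is the orthoprojection onto the one-dimensional space $V_m\ominus V_{m-1}$, which is spanned by a single function $\psi_m$ supported (up to the coarser-level structure) on the atom $A_n$ being split at the relevant step. The crucial point is that although the individual $V_{n,j}$ are not canonical, the \emph{block} $\sum_{j}(P_{n,j}-P_{n,j-1})=P_n-P_{n-1}$ is, and the number of terms in each block is at most $k=\dim S$. So it suffices to prove the estimate for sign choices that are constant on each block, plus a bounded ($\le k$) number of "partial block" corrections; handling the partial blocks amounts to bounding $\|R_m f-P_{n-1}f\|_p$ for $m$ inside the $n$-th block, which follows from the uniform $L^p$-boundedness of all the orthoprojectors $R_m$ (again a consequence of the kernel bound, now applied to the spaces $V_m$, which still satisfy a localized structure).

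With the reduction to block-constant signs in hand, the problem becomes: show $\|\sum_n \varepsilon_n (P_n-P_{n-1})f\|_p\le C_p\|f\|_p$ for $\varepsilon_n=\pm1$. Here I would run a stopping-time / good-$\lambda$ (Burkholder-type) argument directly, or alternatively transfer to the martingale case by comparing $(P_n-P_{n-1})f$ with the martingale difference $(\mathbb E_n-\mathbb E_{n-1})g$ for a suitable $g$. The cleanest route is probably the former: define the square function $\mathcal S f=(\sum_n |(P_n-P_{n-1})f|^2)^{1/2}$, show $\|\mathcal S f\|_p\sim\|f\|_p$ using the kernel bounds (the upper bound via a vector-valued Calderón–Zygmund / Burkholder argument adapted to the fact that $P_n-P_{n-1}$ is not a martingale difference but is "almost orthogonal" across scales because of the off-diagonal decay), and then observe that $\|\sum_n\varepsilon_n(P_n-P_{n-1})f\|_p\le\|\mathcal S(\sum_n\varepsilon_n(P_n-P_{n-1})f)\|_p\cdot C_p=\|\mathcal S f\|_p\cdot C_p\le C_p'\|f\|_p$, since the square function is insensitive to the signs $\varepsilon_n$.

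The main obstacle I expect is precisely the step where martingale orthogonality is lost: $(P_n-P_{n-1})f$ is \emph{not} $\mathscr F_n$-measurable-minus-$\mathscr F_{n-1}$-measurable in the projective sense — $P_n$ is not a conditional expectation — so one cannot quote Burkholder directly. The technical heart of the paper should therefore be a quantitative "almost-martingale" estimate showing that $P_n-P_{n-1}$ is supported essentially on the atom $A_n$, that $(P_n-P_{n-1})f$ has mean zero on $A_n$ (true since $S_n\supset S_{n-1}\ni\charfun_{A_n}$... actually one must check $\charfun_\Omega\in S$ or argue via orthogonality to $S_{n-1}$), and that these differences satisfy exponential off-diagonal $L^2$-decay between scales, which is exactly what the kernel bound from the first lemma delivers. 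Once that localization-plus-decay package is in place, the good-$\lambda$ inequality and hence the theorem follow by a routine, if lengthy, adaptation of the classical martingale proof.
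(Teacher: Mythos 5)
Your overall architecture --- localized pointwise/kernel bounds for the projections, then a weak-type or good-$\lambda$ argument, then interpolation --- is the right one and matches the paper's, and your first lemma (the kernel bound $\lesssim 1/|A|$ on atoms, equivalently $|P_{n,j}g|\lesssim \mathbb E_{n-1}|g|+\mathbb E_n|g|$) is exactly Theorem~\ref{thm:Pnbound_2}. However, there are two genuine gaps. First, the reduction to block-constant signs does not work as stated. Abel summation inside the $n$-th block does express $\sum_{m\in I_n}\varepsilon_m\,df_m$ as a bounded combination of at most $k$ terms $(P_{n,j}-P_{n-1})f$, but there is one such ``partial block correction'' for \emph{every} $n$, so the corrections form an infinite series $\sum_n c_n(R_{m(n)}-P_{n-1})f$; uniform $L^p$-boundedness of the individual operators $R_m$ gives no control over this series, and bounding it is a problem of exactly the same type as the original one (the spaces $V_{n,j_n}$ and $S_{n-1}$ do not interlace to form consecutive differences of a single filtration). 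The paper never makes this reduction: its stopping time $r$ monitors $\max_{m\in I_n}\mathbb E_n|f_m|$ over all intermediate projections in a block simultaneously, and the entire argument is run for arbitrary signs on the individual $df_m$.

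Second, and more seriously, the central step --- the weak-type or good-$\lambda$ estimate in the absence of martingale orthogonality --- is asserted as a ``routine, if lengthy, adaptation,'' but this is where essentially all of the work lies, and on an abstract probability space there is no Calder\'on--Zygmund machinery to fall back on. The paper's substitute is a Gundy-type decomposition $f=a+b+c$ (Theorem~\ref{thm:gundy_like}) built from \emph{two} stopping times: $r$ as above, and $s$ controlling a predictable majorant of the quantities $v_n=\sum_{m\in I_n}\mathbb E_n(\charfun_{\{r=n\}}|df_m|)$. Verifying the three required bounds hinges on a case analysis over whether the $\mathscr F_n$-measurable stopping set $\{r=n\}\subseteq A_n$ equals $A_n$ or one of the two halves $\sm{A_n},\la{A_n}$; this is precisely where the binary structure of the filtration and the asymmetric estimate $\|\psi\|_{\la{A_n}}\lesssim |\sm{A_n}|^{1/2}/|A_n|$ of Lemma~\ref{lem:ess_estimate} enter, and it is also exactly what breaks for non-binary splittings (Example~\ref{ex:three} and Section~\ref{sec:more_than_two} show the analogous pointwise and unconditionality statements are false in general for $r\geq 3$ pieces). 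So the claim that the remaining work is a routine transfer from the martingale case cannot stand as is; the decomposition and its verification constitute the proof.
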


The result of Theorem~\ref{thm:uncond_intro} should be compared to corresponding 
results \cite{GevorkyanKamont2004,Passenbrunner2014} about the unconditionality (in $L^p[0,1]$ for $1<p<\infty$) of univariate orthogonal spline functions 
for arbitrary partitions.
(In the piecewise linear case, those orthogonal spline functions 
are called general Franklin functions.)
A necessary prerequisite for  deriving unconditionality of those functions are sharp 
pointwise estimates for those orthogonal spline functions. Due to the fact that 
the considered splines admit smoothness conditions at the breakpoints,
orthogonal spline functions are not localized, which makes them hard to estimate. 
On the other hand, note that discontinuous spline systems, i.e., such that each knot is of maximal admissible multiplicity, are examples of systems treated in this article. 

Our setting in the present article considers differences $df_m$ that have
the same localization property as  Haar systems. This allows us to prove 
general pointwise estimates for $df_m$, which is done in  Section \ref{sec:pw}.
As a consequence, we  
prove Theorem~\ref{thm:uncond_intro} under very mild 
assumptions on the domain of the involved functions.
Here, we work with arbitrary probability spaces compared to some bounded interval, equipped 
with Lebesgue measure, that 
is needed for univariate orthogonal spline functions.

Denote by $\psi_m^p$ a function with $\|\psi_m^p\|_p = 1$, $\psi_m^p\in V_m$ and $\psi_m^p\perp V_{m-1}$.
Then, we additionally show that the functions $(\psi_m^p)$ are 
democratic in $L^p$ for $1<p<\infty$. 
\begin{thm}\label{thm:democracy}For each $1<p<\infty$,
there exists a positive  constant $C_p$ depending on $p$, the constants $c_1,c_2\in(0,1]$ given in \eqref{eq:L1Linfty}, and the dimension of $S$ so that 
\[
C_p^{-1} (\card\Lambda)^{1/p}\leq \big\| \sum_{m\in\Lambda} \psi_m^p \big\|_p \leq C_p (\card\Lambda)^{1/p}
\]
for every finite subset $\Lambda$ of indices, where $\card \Lambda$ denotes the cardinality of $\Lambda$.
\end{thm}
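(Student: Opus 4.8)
The plan is to prove the two bounds separately, relying on the pointwise estimates for $df_m$ from Section~\ref{sec:pw} and the unconditionality from Theorem~\ref{thm:uncond_intro}. First I would record the basic geometry of the situation: each index $m$ in the re-enumeration corresponds to a pair $(n,j)$, hence to the atom $A_n\in\mathscr A_{n-1}$ that gets subdivided at level $n$; call this atom $B_m$. Since $\dim V_m/V_{m-1}=1$, the difference space is one-dimensional, and the normalized generator $\psi_m^p$ is supported on $B_m$ (the localization property stressed in the introduction). A key preliminary step is therefore to establish, using \eqref{eq:L1Linfty} applied on $B_m$ and its children together with the comparability of $L^q$-norms of functions in $S$ on atoms, that all $L^q$-norms of $\psi_m^p$ on $B_m$ are comparable up to constants depending only on $q$, $p$, $c_1,c_2$ and $\dim S$; in particular
\[
\|\psi_m^p\|_q \asymp |B_m|^{1/q-1/p},\qquad 1\le q\le\infty,
\]
and $\|\psi_m^p\|_\infty\asymp |B_m|^{-1/p}$, with $\psi_m^p$ comparable in absolute value to $|B_m|^{-1/p}$ on a fixed proportion of $B_m$.

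For the lower bound $\|\sum_{m\in\Lambda}\psi_m^p\|_p\gtrsim(\card\Lambda)^{1/p}$, the natural route is duality together with the square-function / unconditionality estimate. By Theorem~\ref{thm:uncond_intro} the differences $(df_m)$ are an unconditional basic sequence in $L^p$ and (by the standard dual formulation, or by running the same argument in $L^{p'}$) also in $L^{p'}$; consequently the projection $f\mapsto\sum_{m\in\Lambda}df_m$ is bounded on $L^{p'}$, and one gets
\[
\Big\|\sum_{m\in\Lambda}\psi_m^p\Big\|_p
=\sup_{\|g\|_{p'}\le1}\Big\langle\sum_{m\in\Lambda}\psi_m^p,g\Big\rangle
=\sup_{\|g\|_{p'}\le1}\sum_{m\in\Lambda}\langle\psi_m^p,R_m g-R_{m-1}g\rangle.
\]
Choosing $g$ adapted to make each inner product $\langle\psi_m^p,dg_m\rangle$ of size $\asymp|B_m|^{1/p'}/ (\text{something})$ — concretely, testing against a suitably normalized combination of the $\psi_m^{p'}$ — and invoking the unconditionality in $L^{p'}$ to control $\|g\|_{p'}$, reduces the lower bound to summing $\card\Lambda$ comparable contributions. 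Alternatively, and perhaps more cleanly, I would prove the pointwise lower estimate: on the "good" part of each $B_m$ the term $\psi_m^p$ dominates, and by a Khintchine-type / martingale-square-function argument (again Theorem~\ref{thm:uncond_intro} gives $\|\sum\psi_m^p\|_p\asymp\|(\sum|\psi_m^p|^2)^{1/2}\|_p$) one bounds the square function from below by $\big(\sum_{m\in\Lambda}|B_m|^{-2/p}\charfun_{B_m^{\mathrm{good}}}\big)^{1/2}$ and integrates.

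For the upper bound $\|\sum_{m\in\Lambda}\psi_m^p\|_p\lesssim(\card\Lambda)^{1/p}$, the same square-function equivalence reduces matters to showing $\big\|\big(\sum_{m\in\Lambda}|\psi_m^p|^2\big)^{1/2}\big\|_p\lesssim(\card\Lambda)^{1/p}$, i.e. $\int_\Omega\big(\sum_{m\in\Lambda}|\psi_m^p|^2\big)^{p/2}\lesssim\card\Lambda$. Using the pointwise bound $|\psi_m^p|\lesssim|B_m|^{-1/p}\charfun_{B_m}$ and the nesting structure of the atoms $B_m$ (for a fixed point $\omega$, the atoms $B_m$ containing $\omega$ are linearly ordered by inclusion, and at most $k=\dim S$ indices $m$ share the same atom), the sum $\sum_{m\ni\omega}|B_m|^{-2/p}$ is governed by a geometric-type series in the measures of a nested chain of atoms; after raising to the power $p/2$ and integrating, the total is controlled by $\sum_{m\in\Lambda}\int_{B_m}|B_m|^{-1}\,\dif\mathbb P=\card\Lambda$, up to a constant depending on $p$ and $k$. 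The main obstacle I anticipate is precisely this last summation: handling the case $p<2$, where $(\sum a_i)^{p/2}\le\sum a_i^{p/2}$ fails and one must instead exploit the geometric decay along nested chains of atoms (the measures need not decay at a fixed rate, but the chain structure still forces summability), and more generally making the constants uniform in the choice of the intermediate spaces $(V_{n,j})$ and independent of $n$ — this is where \eqref{eq:L1Linfty} and the $L^q$-comparability hypothesis on $S$ do the essential work.
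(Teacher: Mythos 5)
Your overall architecture (unconditionality $\Rightarrow$ square-function equivalence, then pointwise estimates plus the nesting of atoms) is the same as the paper's, but the central quantitative claim in your ``key preliminary step'' is wrong, and both halves of the argument inherit the error. You assert $\|\psi_m^p\|_q\simeq|B_m|^{1/q-1/p}$ and the pointwise bound $|\psi_m^p|\lesssim|B_m|^{-1/p}\charfun_{B_m}$, where $B_m=A_n$ is the atom being subdivided. The correct scale is the measure of the \emph{smaller child} $\sm{A_n}$, not of $A_n$: Lemma~\ref{lem:ess_estimate} gives $|\psi_{n,j}|\lesssim|\sm{A_n}|^{-1/2}$ on $\sm{A_n}$ but only $\lesssim|\sm{A_n}|^{1/2}/|A_n|$ on $\la{A_n}$, whence $\|\psi_{n,j}\|_p\simeq|\sm{A_n}|^{1/p-1/2}$ and $\|\psi_{n,j}^p\|_\infty\simeq|\sm{A_n}|^{-1/p}$, which is much larger than $|A_n|^{-1/p}$ for unbalanced splits (already for a Haar function on $[0,1]$ split at $\varepsilon$). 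In particular $\psi_m^p$ is \emph{not} of size $|B_m|^{-1/p}$ on a fixed proportion of $B_m$; it is large only on $\sm{A_n}$, which can be an arbitrarily small fraction of $B_m$, so your pointwise lower bound for the square function is unfounded --- and even on $\sm{A_n}$ the lower estimate $\|\psi_{n,j}\|_{\sm{A_n}}\gtrsim|\sm{A_n}|^{-1/2}$ is only guaranteed when $|\sm{A_n}|/|\la{A_n}|$ is small, which is why the paper's Lemma~\ref{lem:diff_square} needs a case distinction.

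The upper bound breaks for a related reason: the geometric decay along nested chains that you invoke holds for the smaller children (if $\sm{A_m}\subsetneq\sm{A_n}$ then $A_m\subseteq\sm{A_n}$, so $|\sm{A_m}|\le|A_m|/2\le|\sm{A_n}|/2$) but \emph{not} for the parent atoms $B_m$. Take a chain in which each step removes a tiny sliver, so that $N$ atoms $B_{m_1}\supset\cdots\supset B_{m_N}$ all have measure $\simeq 1$ and a common intersection of measure $\simeq 1$; then $\int\big(\sum_{m\in\Lambda}|B_m|^{-2/p}\charfun_{B_m}\big)^{p/2}\dif\mathbb P\simeq N^{p/2}$, which exceeds $N=\card\Lambda$ for $p>2$. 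So ``the chain structure still forces summability'' is false for the $B_m$. The paper's fix is Lemma~\ref{lem:diff_square}: Stein's martingale inequality, applied in both directions, replaces $|\psi_{n,j}|^2$ in the square function by $\charfun_{\sm{A_n}}/|\sm{A_n}|$; for the smaller children the strict halving yields genuine geometric decay, so that $\big(\sum\charfun_{\sm{A_n}}/|\sm{A_n}|^{2/p}\big)^{p/2}\simeq\sum\charfun_{\sm{A_n}}/|\sm{A_n}|$ pointwise and the integral equals $\card\Lambda$ exactly, giving upper and lower bounds at once. Without this reduction (or an equivalent treatment of the contribution of $\psi_{n,j}$ on the large child) your argument does not close.
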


If we combine the results of Theorems~\ref{thm:uncond_intro} and \ref{thm:democracy},
by a result of S.~Konyagin and V.~Temlyakov \cite{KonyaginTemlyakov1999} (see also the book \cite{Temlyakov2011} by V.~Temlyakov), 
$(\psi_j^p)$ is also a greedy basis in its closed span $U_p=\overline{\operatorname{span}}_{L^p}(\cup_n S_n)$ in $L^p$, meaning that
the best $m$-term approximation of each function $f = \sum_j c_j \psi_j^p\in U_p$ in terms of 
the functions $(\psi_j^p)$ is achieved (up to a multiplicative constant) by greedy approximation, i.e.,
by taking just the terms in the expansion of $f$ with the $m$ highest absolute 
values of coefficients $|c_j| = \|\langle f,\psi_j\rangle\psi_j\|_p$.

The main motivation of the work presented in this article comes from the following question. 
It is known that the best $n$-term Haar approximation of a function $f \in L^p[0,1]^d$, $1 <p< \infty$ 
can be achieved by greedy approximation. 
On the other hand, P. Petrushev showed that the spaces corresponding to the best 
$n$-term approximation by linear combinations of characteristic functions of 
 dyadic cubes and by linear combinations of Haar functions coincide, 
see  \cite[Theorems 3.3 and 5.3]{pp.2003.a}. 

 In \cite{part2}, we ask the question whether this is also true in the 
 more general situation given in the present article. 
 Interesting special cases include  general Haar systems or  
orthonormal piecewise polynomial systems corresponding to rectangular 
partitions, as discussed in Example~\ref{ex:multivariate_remez}. 
 However, for this we need to know that the orthonormal systems under 
 consideration are greedy basic sequences in $L^p$. 
 This property is known (or can be easily deduced from the unconditionality) 
  for general Haar systems (cf. \cite{ak.2001}), or for orthonormal spline systems 
  on the unit interval (cf. \cite{GevorkyanKamont2004,Passenbrunner2014}).
   The required result in general is provided by Theorems \ref{thm:uncond_intro} 
   and \ref{thm:democracy} above.

The organization of the article is as follows. Section~\ref{sec:examples} lists 
some explicit examples of spaces $S$ and atoms $\mathscr A$ satisfying inequality~\eqref{eq:L1Linfty}.
In Section~\ref{sec:density} we give a criterion under which $\cup_n S_n$ is dense
in $L^p$. In Section~\ref{sec:pw} we prove some crucial pointwise estimates for $P_nf$
that are used subsequently. Sections~\ref{sec:decomp} and \ref{sec:uncond} show Theorem~\ref{thm:uncond_intro}
by deriving a decomposition of $L^1$-functions similar to Gundy's decomposition 
for martingales. In Section~\ref{sec:democracy} we show Theorem~\ref{thm:democracy}. 
Section~\ref{sec:infty} treats the case of infinite measure spaces.
In the last Section~\ref{sec:more_than_two} we investigate the 
questions of unconditionality and democracy in the 
setting of non-binary filtrations, meaning that we consider splitting $A_{n}\in \mathscr A_{n-1} $ into 
$r\geq 2$ atoms $A_{n,1},\ldots,A_{n,r}$ of $\mathscr F_n$ instead of just two.
In particular, we show that general results as Theorem~\ref{thm:uncond_intro} 
that show unconditionality independently of the choice of $(V_{n,j})$,
are not possible in this case.

\section{Examples}\label{sec:examples}
In this section we give a list of explicit examples of probability spaces
$(\Omega,\mathscr F,\mathbb P)$, finite-dimensional vector spaces $S$ and 
binary filtrations
$(\mathscr F_n)$ satisfying inequality~\eqref{eq:L1Linfty} for some constants $c_1,c_2 \in(0,1]$.

Before we discuss examples, we make the following simple observation:
\begin{lem}\label{lem:remez_general}
	Let $(\Omega,\mathscr F,\mathbb P)$ a probability space with $|\cdot|=\mathbb P$ and $(\mathscr F_n)$ be 
	a binary filtration. Let $S$ be chosen such that there exist
	two positive constants $C,n$ so that for all $A\in \mathscr A$ we have the inequality
	\begin{equation}\label{eq:remez_general}
		\|f\|_A \leq \Big(\frac{C |A|}{|E|}\Big)^n \| f\|_E,\qquad f\in S,
	\end{equation}
	for all $\mathscr F$-measurable subsets $E\subset A$.

	Then, inequality~\eqref{eq:L1Linfty} is 
	satisfied for all $A\in\mathscr A$ with the constants $c_1 = (2C)^{-n}$ and $c_2 = 1/2$.
\end{lem}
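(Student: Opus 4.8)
The plan is to derive \eqref{eq:L1Linfty} directly from the Remez-type inequality \eqref{eq:remez_general} by a standard measure-theoretic argument. Fix an atom $A\in\mathscr A$ and a function $f\in S$ with $f\not\equiv 0$ on $A$ (the case $\|f\|_A = 0$ being trivial). Set
\[
	U := \{\omega\in A : |f(\omega)| \geq c_1\|f\|_A\}, \qquad c_1 = (2C)^{-n},
\]
and let $E := A\setminus U$. Since $f\in S\subseteq L^\infty$, the set $U$ is $\mathscr F$-measurable, hence so is $E$. The goal is to show $|U|\geq \tfrac12 |A|$, equivalently $|E|\leq\tfrac12|A|$.

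First I would argue by contradiction: suppose $|E| > \tfrac12|A|$, i.e.\ $|A|/|E| < 2$. Applying \eqref{eq:remez_general} with this $E\subset A$ gives
\[
	\|f\|_A \leq \Big(\frac{C|A|}{|E|}\Big)^n \|f\|_E < (2C)^n \|f\|_E.
\]
On the other hand, by the definition of $E = A\setminus U$, we have $|f(\omega)| < c_1\|f\|_A$ for a.e.\ $\omega\in E$, so $\|f\|_E \leq c_1\|f\|_A = (2C)^{-n}\|f\|_A$. Combining the two displayed bounds yields $\|f\|_A < (2C)^n\cdot(2C)^{-n}\|f\|_A = \|f\|_A$, a contradiction (using $\|f\|_A > 0$). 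Hence $|E|\leq\tfrac12|A|$, so $|U| = |A| - |E| \geq \tfrac12|A| = c_2|A|$ with $c_2 = 1/2$, which is exactly \eqref{eq:L1Linfty}.

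There is essentially no serious obstacle here; the only points requiring a little care are the measurability of $U$ and $E$ (immediate since $f$ is $\mathscr F$-measurable and $S\subseteq L^\infty$ so $\|f\|_A<\infty$), and making sure \eqref{eq:remez_general} is applicable — it is stated for \emph{all} $\mathscr F$-measurable subsets $E\subset A$, and one should note that $|E|>0$ is guaranteed under the contradiction hypothesis $|E|>\tfrac12|A|>0$ so that the ratio $|A|/|E|$ is well-defined. The strictness of the inequality in the contradiction step comes from $|A|/|E|<2$ being strict; if one prefers to avoid strictness issues one can equally well run the argument directly, estimating $\|f\|_A \leq (C|A|/|E|)^n\|f\|_E \leq (C|A|/|E|)^n c_1\|f\|_A$ and concluding $(|A|/|E|)^n \geq (Cc_1)^{-1} = (2C)^n/C^n\cdot\text{(constant)}$, but the contradiction phrasing is cleanest. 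The constants $c_1 = (2C)^{-n}$ and $c_2 = 1/2$ produced this way depend only on the constants $C, n$ from \eqref{eq:remez_general} and not on $A$ or the enumeration index, as required.
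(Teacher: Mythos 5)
Your proof is correct and follows essentially the same route as the paper: both apply the Remez-type inequality \eqref{eq:remez_general} to the sub-level set $E=\{\omega\in A: |f(\omega)|<c_1\|f\|_A\}$ and combine it with $\|f\|_E\leq c_1\|f\|_A$ to force $|E|\leq |A|/2$. The only cosmetic difference is that you phrase it as a contradiction while the paper derives $|E|\le |A|/2$ directly from $|E|^n\le \alpha(C|A|)^n$; the content is identical.
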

\begin{proof}
	Let $A\in \mathscr A$ be a fixed atom and let $f\in S$ be arbitrary.
	We assume without restriction that $\|f\|_A >0$.
	Then, we just need to apply assumption~\eqref{eq:remez_general} to the 
	$\mathscr F$-measurable set 
	\[
		E = \{ \omega\in A : |f(\omega)|< \alpha \|f\|_A \},
	\]	
	with $\alpha = (2C)^{-n}$. Indeed, doing that implies
	\[
		\| f\|_A  \leq \Big(\frac{C |A|}{|E|}\Big)^n \| f\|_E \leq 
		\alpha \Big(\frac{C |A|}{|E|}\Big)^n \|f\|_A.
	\]
	Since $\|f\|_A > 0 $ we obtain
		$|E|^n \leq \alpha (C |A|)^n$ and the specific value of $\alpha$ yields $|E| \leq |A|/2$.
		Therefore,
		\[
			|A\setminus E| = |\{ \omega \in A : |f(\omega)| \geq \alpha\|f\|_A \}| \geq \frac{|A|}{2}.
		\]
		Since $A\in\mathscr A$ and $f\in S$ were arbitrary, this shows \eqref{eq:L1Linfty}
		with the constants $c_1 = \alpha = (2C)^{-n}$ and $c_2 = 1/2$.
\end{proof}

Thus, if we give examples satisfying \eqref{eq:remez_general},  in particular we
have \eqref{eq:L1Linfty} as well.

In the following two examples (Examples~\ref{ex:poly1} and \ref{ex:turan1}), 
let $\Omega$ be a bounded subinterval of $\mathbb R$, $\mathscr F$ be the
	Borel $\sigma$-algebra, $\lambda$ the (one-dimensional) Lebesgue 
	measure and $\mathbb P = |\cdot|$  the renormalized Lebesgue measure $\lambda/\lambda(\Omega)$.
\begin{example} \label{ex:poly1}
Let $S$ be the space of polynomials of 
	degree at most $n$. Then, the classical Remez inequality 
	(see for instance the paper \cite{Bojanov1993} by B. Bojanov)
	implies that for
	every $f\in S$ and every
	 bounded interval $A$ and every measurable subset $E\subset A$ we have 
	\[
		\|	f \|_A  \leq \Big( \frac{4 |A|}{|E|} \Big)^n \| f\|_E.
	\]
	Thus, for every binary filtration $(\mathscr F_n)$,
	where all atoms $A\in\mathscr A$ are intervals, we use Lemma~\ref{lem:remez_general}
	to deduce inequality~\eqref{eq:L1Linfty} with the constants $c_1 = 8^{-n}$ 
	and $c_2 = 1/2$.
\end{example}

\begin{example} \label{ex:turan1}
Let $(\lambda_k)_{k=1}^n$ be arbitrary complex numbers. 
In what follows $\Re z$ denotes the real part of the complex number $z$.
Set
\[
	S := \lin \{ \exp (\lambda_k \cdot) : k = 1,\ldots,n\}.	
\]
F.~Nazarov's extension \cite{Nazarov1993} of P.~Turan's lemma  states that 
for every $f\in S$, every bounded interval $A$ and every measurable subset $E\subset A$ 
we have
\begin{equation}\label{eq:nazarov_turan_pre}
	\|f\|_A \leq \exp\Big(\max_{1\leq k\leq n} |\Re\lambda_k|\cdot \lambda(A)\Big)	 
	\Big( \frac{C |A|}{|E|}\Big)^{n-1} \|f\|_E
\end{equation}
for some positive  constant $C$. Letting $C_\Omega = 
\exp\big( \max_{1\leq k\leq n} |\Re \lambda_k|\cdot \lambda(\Omega) / (n-1) \big) C$, we have
as a consequence 
\begin{equation}\label{eq:nazarov_turan}
	\| f\|_A \leq \Big( \frac{C_\Omega |A|}{|E|}  \Big)^{n-1} \|f\|_E.
\end{equation}
Inequality~\eqref{eq:nazarov_turan} shows 
assumption~\eqref{eq:remez_general} of Lemma~\ref{lem:remez_general}. 
Therefore, Lemma~\ref{lem:remez_general} implies inequality~\eqref{eq:L1Linfty} with
the constants $c_1 = (2C_\Omega)^{-n}$ and $c_2 = 1/2$.

In particular, \eqref{eq:L1Linfty} is satisfied if $S$ is the space of trigonometric polynomials
up to some degree $n$.
\end{example}

Now, let $\Omega\subset \mathbb R^d$ be a convex body (i.e., a compact 
convex subset of $\mathbb R^d$ with nonempty interior), $\mathscr F$
the Borel $\sigma$-algebra on $\Omega$ and $\mathbb P = |\cdot|$ be the 
$d$-dimensional Lebesgue measure renormalized so that $|\Omega| = 1$.

\begin{example}\label{ex:multivariate_remez}
	If $p(x) = \sum_{\alpha\in I} a_\alpha x^\alpha$ is a $d$-variate polynomial
	where $I$ is a finite set containing $d$-dimensional multiindices, then
	the degree of $p$ is defined as $\max \{ \sum_{i=1}^d \alpha_i : \alpha\in I\}$.
	For a fixed non-negative integer $n$, let $S$ be the space of polynomials
	of degree at most $n$.

	Then, the multivariate Remez inequality (see \cite{Ga2001,BrudnyiGanzburg1973}) states
	that if $A\subset \mathbb R^d$ is a convex body and $E\subset A$ a measurable subset,
	for all $f\in S$ we have 
	\begin{equation}\label{eq:remez_multi}
		\|f\|_A \leq \Big(  \frac{4d |A|}{|E|}\Big)^n \|f\|_E.
	\end{equation}

	Thus, for every binary filtration $(\mathscr F_n)$,
	where all atoms $A$ are convex sets whose closure $\bar{A}$ has
	the same Lebesgue measure as $A$, we use Lemma~\ref{lem:remez_general} to 
	deduce inequality \eqref{eq:L1Linfty} with the constants $c_1 = (8d)^{-n}$ 
	and $c_2 = 1/2$.
\end{example}

We remark here that similar to the multivariate Remez inequality as in 
Example~\ref{ex:multivariate_remez}, there are
multivariate versions of \eqref{eq:nazarov_turan}, see for instance the result 
\cite{Fontes-Merz2006} by N.~Fontes-Merz.
Therefore, by Lemma~\ref{lem:remez_general}, such spaces of functions also
satisfy inequality~\eqref{eq:L1Linfty} and are thus examples of the general
framework discussed here.

\begin{example}
	Let $\gamma : [0,1]\to \mathbb C$ be a curve of unit length that is two times continuously 
	differentiable and let $S$ be the space of polynomials of degree at most $n$.
	Let $\mathbb P$ be the one-dimensional Lebesgue measure on the image of $\gamma$.
	Then, there exist constants $c_1,c_2\in (0,1]$ that may depend on $\gamma$ such that,
	for all intervals $I\subset [0,1]$ and denoting by  $\gamma_I$ 
	the restriction of $\gamma$ to the interval $I$, we have the inequality 
	\[
			\mathbb P( \{ z\in \gamma_I : |f(z)| \geq c_1\|f\|_{\gamma_I} \}) \geq c_2 |\gamma_I|,\qquad f\in S.
	\]
	
	This inequality is easy to see if $\gamma$ is a straight line and if $\gamma$ is 
	not a straight line, we approximate $\gamma$ by straight lines and use 
	translation and dilation invariance of the class of polynomials of degree at most  $n$.
\end{example}

\begin{rem}
	 If inequality~\eqref{eq:L1Linfty} is satisfied for some
	vector space $S$, any subspace of $S$ clearly satisfies \eqref{eq:L1Linfty}
	as well. Thus, in each of the above examples it is possible to replace the 
	space $S$ considered there by any linear subspace thereof.
\end{rem}

\section{Density of $\cup_n S_n$ in $L^p$ spaces}\label{sec:density}
Here and in the following we assume that $(\Omega,\mathscr F,\mathbb P)$ is 
a probability space, $(\mathscr F_n)$ is a binary filtration and 
$S\subset L^\infty$ is a vector space satisfying
inequality \eqref{eq:L1Linfty} for each atom $A\in\mathscr A$.
We also use the notation introduced in Section~\ref{sec:intro}.

Next, we  give a condition in terms 
of the filtration $(\mathscr F_n)$ under which 
$\cup_n S_n$ is dense in $L^p$. Here we also have to assume something about the space $S$, 
but as we will see in the remark below, this condition is necessary.

\begin{thm}\label{thm:dense_suff}
	Let $0< p<\infty$  and assume that the following two conditions are
satisfied:
\begin{enumerate}
	\item  $\big| \bigcap_{f\in S} \{ f = 0\} \big| = 0$,
	\item  for all $\varepsilon>0$ and every $A\in \mathscr F$ there exists a
		positive integer $N$ and a set $B\in \mathscr F_N$ satisfying
		$|A\Delta B|\leq \varepsilon$.
\end{enumerate}

Then, $\cup_n S_n$ is dense in $L^p$.
\end{thm}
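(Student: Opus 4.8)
The plan is to show density by a standard two-step reduction: first reduce to approximating characteristic functions of sets in $\mathscr F$, and then reduce further to approximating $\charfun_A$ for $A$ an atom of some $\mathscr F_N$, where condition (2) does the work; the role of condition (1) is to guarantee that locally, on a single atom, the space $S$ can reproduce constants well enough in $L^p$. More precisely, since simple functions $\sum_i a_i \charfun_{A_i}$ with $A_i\in\mathscr F$ are dense in $L^p$ for $0<p<\infty$ (for $0<p<1$ using the quasi-norm, the same density statement holds), and by condition (2) each $\charfun_{A_i}$ is approximated in $L^p$ by $\charfun_{B_i}$ with $B_i\in\mathscr F_{N_i}$, it suffices to approximate $\charfun_B$ for $B\in\mathscr F_N$ by elements of $\cup_n S_n$. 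Such a $B$ is a finite disjoint union of atoms of $\mathscr F_N$, so by linearity it is enough to approximate $\charfun_A$ in $L^p$ for a single atom $A\in\mathscr A_N$.

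Next I would exploit condition (1) at the level of a single atom. Fix an atom $A$. I claim that for every $\varepsilon>0$ there is a refinement of $A$ into finitely many atoms (reached by further steps of the binary filtration, all of which subdivide inside $A$ eventually — here one should be a little careful, since the filtration may split atoms outside $A$ for a long time, but since the intersection of the decreasing atoms containing a given point shrinks, one can still pass to arbitrarily fine partitions of $A$) together with functions from $S$ on each piece whose sum approximates $\charfun_A$ in $L^p$. The key point is the following: condition (1) says the common zero set of $S$ is null, so for a.e.\ $\omega$ there is $f\in S$ with $f(\omega)\neq 0$; by a compactness/finite-dimensionality argument one gets, for any $\delta>0$, a set $\Omega_\delta\subseteq\Omega$ with $|\Omega\setminus\Omega_\delta|<\delta$ and a single finite collection of functions $f_1,\dots,f_k$ spanning $S$ such that $\max_i |f_i(\omega)|\geq \eta>0$ on $\Omega_\delta$. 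On a very small atom $A'\subseteq A\cap\Omega_\delta$, since $S\subset L^\infty$ the functions in $S$ are nearly constant relative to... no — that is false in general, $S$ need not consist of continuous functions.

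So instead, the honest route to approximating $\charfun_A$ is via the orthoprojections themselves: consider $P_N \charfun_A$ and more generally $P_n\charfun_A$ for $n\ge N$, and show $P_n\charfun_A\to\charfun_A$ in $L^p$. On the atom $A$, $P_n\charfun_A$ restricted to sub-atoms of $A$ is the $L^2(A')$-orthoprojection of $1$ onto $S(A')$; the defect $\charfun_{A'}-P_{A'}1$ is orthogonal to $S(A')$ and in particular, using \eqref{eq:L1Linfty} together with the equivalence of $L^p$ norms on $S(A')$ noted in the introduction, one controls $\|\charfun_{A'} - P_{A'}1\|_{L^p(A')}$ in terms of how well the constant function $1$ is approximated in $L^2(A')$ by $S(A')$, uniformly normalized by $|A'|$. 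The content of condition (1) is exactly that this relative $L^2$ defect tends to $0$ as the atoms shrink to a point (for a.e.\ point): if it did not, one would produce, in the limit, a nonzero measure set on which every function of $S$ is far from constant at every scale — and a martingale/density argument (Lebesgue differentiation relative to the filtration, applied to the finitely many coordinate functions $f_i$) forces the common zero set to have positive measure, contradicting (1). I expect this last limiting argument — converting "common zero set is null" into "the relative $L^2$ approximation error of constants by $S$ on shrinking atoms goes to $0$ a.e." — to be the main obstacle; everything before it is bookkeeping.

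Concretely, the steps in order are: (i) reduce to $\charfun_A$, $A\in\mathscr A$; (ii) show $\|\charfun_A - P_n\charfun_A\|_p$ is controlled by $\sum_{A'\in\mathscr A_n, A'\subseteq A}$ of the normalized $L^2$-defects $\||A'|^{-1/2}\mathbf{1}_{A'}\|$ distance to $S(A')$, using \eqref{eq:L1Linfty} to pass between $L^2$ and $L^p$ on $S(A')$ with constants independent of $A'$; (iii) invoke the martingale convergence theorem applied to a basis $f_1,\dots,f_k$ of $S$ to see that on sub-atoms shrinking to a.e.\ point, the normalized restriction of $f_i$ is asymptotically a genuine constant (its value equals $\lim \mathbb{E}(f_i \mid \mathscr F_n)$ there), and combine this with condition (1) to conclude the per-atom defect vanishes a.e.; (iv) dominate and pass to the limit by dominated convergence (the defects are bounded by $1$) to get $P_n\charfun_A\to\charfun_A$ in $L^p$. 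I should double-check the $0<p<1$ case separately, where the triangle inequality is replaced by $p$-subadditivity, but the argument is structurally identical.
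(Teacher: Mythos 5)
Your strategy is genuinely different from the paper's: the paper never looks at the projections $P_n$ at all, but instead builds the approximant of $\charfun_A$ by hand, partitioning $A$ into level sets $D_j$ on which some basis function $p_i$ of $S$ has nearly constant (nonzero) value, using condition (1) only to make the exceptional set $\cap_i\{p_i=0\}$ negligible, condition (2) to replace each $D_j$ by a union of atoms, and \eqref{eq:L1Linfty} to control the sup-norm of the rescaled $p_i$ on the atoms retained. Your route via $\|\charfun_A-P_n\charfun_A\|_p\to 0$ is viable, but as written it has a gap exactly at the step you yourself flag as ``the main obstacle'', namely the passage from condition (1) to the a.e.\ decay of the normalized local $L^2$-defect $\delta_n(\omega)^2=|A'|^{-1}\dist_{L^2(A')}(\charfun_{A'},S(A'))^2$, where $A'$ is the atom of $\mathscr F_n$ containing $\omega$. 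Two things are missing there. First, a.e.\ convergence of the conditional means $\mathbb E_n f_i$ (which is all your parenthetical justification invokes) does not yield that $f_i$ is asymptotically constant on atoms in relative $L^2$; you must also apply a.e.\ martingale convergence to $|f_i|^2$ and expand $|A'|^{-1}\int_{A'}|f_i-f_i(\omega)|^2\dif\mathbb P=\mathbb E_n(|f_i|^2)(\omega)-2\Re\big(\overline{f_i(\omega)}\,\mathbb E_nf_i(\omega)\big)+|f_i(\omega)|^2$, which tends to $0$ only once both limits are identified. Second, that identification ($\mathbb E_\infty f_i=f_i$ and $\mathbb E_\infty|f_i|^2=|f_i|^2$ a.e.) is false for a general binary filtration: your claim that ``the intersection of the decreasing atoms containing a given point shrinks'' does not hold in general, and $f_i$ could oscillate below the resolution of $\sigma(\cup_n\mathscr F_n)$, in which case $f_i$ is not approximately constant on any atom and your step (iii) fails outright. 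What saves you is condition (2), which forces $\mathscr F=\sigma(\cup_n\mathscr F_n)$ modulo null sets; you use (2) only in the reduction of step (i), but it is equally indispensable in step (iii), and this needs to be said and proved.

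With those repairs the argument closes: for a.e.\ $\omega$ pick $i$ with $f_i(\omega)\neq 0$ (condition (1)), bound $\delta_n(\omega)$ by the relative $L^2$-distance from $\charfun_{A'}$ to $f_i/f_i(\omega)$, and conclude $\delta_n\to 0$ a.e.; then $\|\charfun_A-P_n\charfun_A\|_p\to 0$ by dominated convergence, using for $p\le 2$ H\"older on each atom and for $p>2$ the uniform bound $\|P_{A'}\charfun_{A'}\|_{A'}\lesssim 1$ coming from \eqref{eq:L1Linfty} as in \eqref{eq:whole} --- note that you cannot literally ``pass between $L^2$ and $L^p$ on $S(A')$'' as you propose, since the error $\charfun_{A'}-P_{A'}\charfun_{A'}$ is not an element of $S(A')$. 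So the verdict is: different and salvageable approach, but the central lemma is asserted rather than proved, and the stated justification for it is insufficient without the second-moment martingale argument and a second appeal to condition (2).
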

\begin{rem}
	(a) After the  choice of a basis $p_1,\ldots,p_k$ of $S$, we have 
	\[
			\bigcap_{f\in S} \{ f = 0\} = \bigcap_{i=1}^k \{ p_i = 0\},
	\]
	which in particular implies the measurability of the set $\bigcap_{f\in S} \{ f = 0\} $.

	(b) Condition (1) is necessary: for assume that condition (1) is not satisfied. Let 
	$D = \cap_{f\in S}\{f = 0\}\in \mathscr F$,
	with $|D|=c>0$. By definition of $S_n$, every function $f\in \cup_n S_n$ vanishes identically
	on $D$, thus
	\[
		c = \inf_{f\in \cup_n S_n}	\| \charfun_D - f\|_{L^p(D)}^p 
		\leq \inf_{f\in \cup_n S_n} \|\charfun_D - f\|_p^p.
	\]	
	This implies that $\cup_n S_n$ is not dense in $L^p$.

	(c) If the $\sigma$-algebra $\mathscr F$ is generated by $\cup_n \mathscr F_n$, then condition
	(2) is satisfied.

\end{rem}

\begin{proof}[Proof of Theorem~\ref{thm:dense_suff}] 
	Choose a basis $p_1,\ldots,p_k$ of $S$ and assume that
	(2) is satisfied. Observe that also, by (1), we have $|\cap_{i=1}^k \{p_i
	= 0\} |=0$.
	Let $A\in\mathscr F$.
	We intend to approximate $\charfun_A$ in $L^p$ by functions in $\cup_n
	S_n$.
	Fix $\varepsilon>0$. 
	First, we split $\mathbb C\setminus \{0\}$ into disjoint cubes $(I_j)_{j\in\mathbb
		Z\setminus\{0\}}$ so that for each $j$, and every $x,y\in I_j$,
		we have that $x/y \in B(1,\varepsilon):=\{ z \in \mathbb C : |z-1| < \varepsilon\}$.
	For $i = 1,\ldots,k$ and $j\in \mathbb Z\setminus \{0\}$, define the sets
	\[
		C_{i,j} = \{ p_i \in I_j\}
	\]
	and $C_{i,0}=\{ p_i = 0\}$.
	For $j = (j_1,\ldots,j_k)\in\mathbb Z^k$, define the disjoint sets
	\[
		D_j := A \cap C_{1,j_1}\cap \cdots \cap C_{k,j_k}.
	\]
	Let $W$ be a finite set of indices $j\in \mathbb Z^k\setminus \{0\}$ so that $|\cup_{j\in W} D_j| \geq 
	(1-\varepsilon)|A|$ and for each $j\in W$ we have that $|D_j|>0$. 
	This is possible for every $\varepsilon>0$ since by assumption (1), we have $|D_0| = 0$.
	For each $j\in W$, choose a set $B_j \in \mathscr F_{N_j}$
	with
	\[
		| B_j \Delta D_j| \leq \varepsilon_j,
	\]
	where $\varepsilon_j$ depending on $\varepsilon,p,k$ will be chosen later.
	Since $B_j\in \mathscr F_{N_j}$, it can be written as a finite union of
	disjoint atoms $(A_{j,\ell})_\ell$ from $\mathscr F_{N_j}$.

	For $q = 1-c_2/2 \in (0,1)$ with $c_2$ being the constant in \eqref{eq:L1Linfty},
	 split the indices $\ell$ into
	\[
		\Gamma_j := \{\ell : |A_{j,\ell} \cap D_j|\leq q |A_{j,\ell}|\},\qquad
		\Lambda_j := \{\ell : |A_{j,\ell} \cap D_j| >
		q|A_{j,\ell}|\}.
	\]
	We have the following estimate
	\begin{equation}\label{eq:e_j}
		\varepsilon_j \geq |B_j \setminus D_j| = \sum_\ell
		|A_{j,\ell}\setminus D_j| \geq \sum_{\ell\in\Gamma_j}
		|A_{j,\ell}\setminus D_j| = \sum_{\ell\in \Gamma_j}
		|A_{j,\ell}\cap D_j^c| \geq (1-q) \sum_{\ell\in \Gamma_j}
		|A_{j,\ell}|.
	\end{equation}
	For $j\in W$, let $u_j := p_i$ with $i\in\{1,\ldots,k\}$ be chosen so that
	$j_i \neq 0$. 
	Let $\omega_j\in D_j$ be arbitrary.
	Then, set $v_j := u_j / u_j(\omega_j)$ and 
	$E_j := \cup_{\ell\in \Lambda_j} A_{j,\ell}$.
	Observe that \eqref{eq:e_j} then implies
	\begin{equation}\label{eq:e_j_2}
		| E_j\Delta D_j | \leq |B_j\Delta D_j| + \Big| \bigcup_{\ell\in
		\Gamma_j} A_{j,\ell}\Big| \leq \big(1+ (1-q)^{-1}\big)
		\varepsilon_j.
	\end{equation}
	Now, we define the approximation $f$ of $\charfun_A$ to be a suitable finite sum of the
	following form:
	\[
		f = \sum_{j\in W}  v_j \charfun_{E_j} \in \cup_n S_n.
	\]
	The values of the function $v_j$ on
	$D_j$ are
	contained in the ball $B(1,\varepsilon)$ by construction of the
	sets $D_j$ and the intervals $I_j$. 

	Next we see that $|v_j|$ is bounded on $E_j$ by $c_1^{-1}(1+\varepsilon)$. Assume the contrary, i.e.,
	 assume that $\| v_j \|_{A_{j,\ell}} > (1+\varepsilon)/c_1$ for some
	$\ell\in \Lambda_j$.
	This would imply using \eqref{eq:L1Linfty} that
	\begin{equation}\label{eq:c3_1}
		c_2|A_{j,\ell}| \leq |\{ \omega\in A_{j,\ell} : |v_j| \geq
		c_1 \|v_j\|_{A_{j,\ell}}\}| \leq |\{ \omega\in
			A_{j,\ell} : |v_j| > 1+\varepsilon\}|.
	\end{equation}
	On the other hand, since $\ell\in \Lambda_j$,
	\begin{equation}\label{eq:c3_2}
		q|A_{j,\ell}| \leq |D_j \cap A_{j,\ell}| \leq |\{ \omega\in
			A_{j,\ell} : |v_j|\leq 1+\varepsilon\}|.
	\end{equation}
	Both \eqref{eq:c3_1} and \eqref{eq:c3_2} simultaneously are impossible by 
	definition of $q= 1-c_2/2$ and thus we obtain $\|v_j\|_{L^\infty(E_j)} \leq (1+\varepsilon)/c_1$.

	Since we know the inequality $|\cup_{j\in W} D_j| \geq (1-\varepsilon)|A|$, 
	in order to estimate $\| f - \charfun_A\|_p$, it suffices to estimate
	the $p$-norm of the function $\delta= f-\sum_{j\in W} \charfun_{D_j}$.
	Define $\delta_1 = \sum_{j\in W} (1-v_j)\charfun_{D_j\cap E_j}$ and
	$\delta_2 = \delta - \delta_1$.
Since the range of $v_j$ on $D_j$ is contained in $B(1,\varepsilon)$ and the
sets $D_j\cap E_j$ are disjoint, we obtain $\|\delta_1\|_p \leq \varepsilon$.
Moreover, we have the pointwise inequality $|\delta_2| \leq \sum_{j\in W}
(1+\|v_j\|_{L^\infty(E_j)}) \charfun_{D_j\Delta E_j} \leq \sum_{j\in W}
(1+(1+\varepsilon)c_1^{-1}) \charfun_{D_j\Delta E_j}$. By \eqref{eq:e_j_2}, this implies in the
case $p\geq 1$
\[
\|\delta_2\|_p \leq (1+(1+\varepsilon)c_1^{-1})\big(1+ (1-q)^{-1}\big)^{1/p}\sum_{j\in W}
\varepsilon_j^{1/p},
\]
and in the case $p<1$
\[
	\|\delta_2\|_p^p \leq (1+(1+\varepsilon)c_1^{-1})^p \big( 1+(1-q)^{-1}\big) \sum_{j\in
	W}\varepsilon_j.
\]
In both cases, we can choose $(\varepsilon_j)$ depending on $\varepsilon,p,k$ such
that $\|\delta_2\|_p\leq \varepsilon$. Combining this with the estimate for
$\|\delta_1\|_p$, we obtain that the characteristic function $\charfun_A$ of
each set $A\in \mathscr F$ can be approximated arbitrarily closely by functions
in $\cup_n S_n$ in $L^p$, which implies the conclusion.
\end{proof}

\section{A pointwise bound for $P_{n,j}$}
\label{sec:pw}
We continue to use the notation from Section~\ref{sec:intro}. In particular,
$P_n$ denotes the orthoprojector onto $S_n$.
Since $S \subset L^\infty(\Omega)$,
the operators $P_n$ can be extended to $L^1(\Omega)$ and
it is easy to see that for each $n$ it satisfies
\begin{equation} \label{eq:commute}
P_n(\charfun_B f) = \charfun_B P_n f,\qquad B\in \mathscr F_n, f\in L^1.
\end{equation}
Indeed, if $g\in S_n$, we have
\[
	\int (\charfun_B f - \charfun_B P_n f) \bar{g}\dif\mathbb P = \int (f - P_n
	f)\overline{\charfun_B g}\dif\mathbb P,
\]
which vanishes since $\charfun_B g$, together with $g$, is contained in $S_n$,
which is the range of $P_n$.
If $n,j$ are chosen so that $V_{n,j} = V_m$, we have of course $P_{n,j} = R_m$.
Define the index set
$I_n = \{ m : S_{n-1} \subsetneq V_m \subseteq S_n\}$.

\begin{lem}\label{lem:orth}
	Suppose that $n\geq 1$ and let $g\in S_n$ with $g\perp S_{n-1}$. Then
	$g$ vanishes identically a.e. on the complement $A_n^c$ of $A_n$.
\end{lem}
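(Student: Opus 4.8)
The plan is to exploit the $L^2$-orthogonality structure together with the locality of the partition. Recall that $\mathscr F_n$ is obtained from $\mathscr F_{n-1}$ by subdividing exactly one atom $A_n$ into $\sm{A_n}$ and $\la{A_n}$; thus every atom of $\mathscr F_{n-1}$ other than $A_n$ is still an atom of $\mathscr F_n$, while $A_n$ is replaced by its two children. Consequently $S_n = S(A_n^c\cap\mathscr F_{n-1}) \oplus S(\sm{A_n}) \oplus S(\la{A_n})$ and $S_{n-1} = S(A_n^c\cap\mathscr F_{n-1}) \oplus S(A_n)$, where the first summand $S(A_n^c\cap \mathscr F_{n-1}) := \bigoplus_{A\in\mathscr A_{n-1},\,A\neq A_n} S(A)$ is common to both.

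First I would fix $g\in S_n$ with $g\perp S_{n-1}$ and test orthogonality against functions supported on $A_n^c$. For every atom $A\in\mathscr A_{n-1}$ with $A\neq A_n$ and every $h\in S$, the function $h\charfun_A$ lies in $S_{n-1}$, so $\int_A g\bar h\dif\mathbb P = \langle g, h\charfun_A\rangle = 0$. Since $\charfun_A g \in S(A)$ (because $A$ is also an atom of $\mathscr F_n$ and $g\in S_n$), we may write $\charfun_A g = \tilde g\charfun_A$ for some $\tilde g\in S$; taking $h=\tilde g$ gives $\int_A |\tilde g|^2\dif\mathbb P = 0$, hence $\tilde g = 0$ a.e.\ on $A$, i.e.\ $g$ vanishes a.e.\ on $A$. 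Running this over all atoms $A\in\mathscr A_{n-1}\setminus\{A_n\}$ and noting that their union is exactly $A_n^c$ yields the claim.

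The only point requiring a little care is the degenerate case $\ell_n = 0$, i.e.\ $S_n = S_{n-1}$: then the hypothesis forces $g\perp S_n$ and $g\in S_n$, so $g=0$ a.e.\ and the statement is trivially true. I do not expect a genuine obstacle here — the argument is purely the observation that the filtration is binary so the partitions agree off $A_n$, combined with the elementary fact that a function in $S$ that is $L^2$-orthogonal to all of $S$ on a set vanishes there. One should perhaps remark that the argument did not use inequality~\eqref{eq:L1Linfty} at all; it is a pure consequence of the binary structure of $(\mathscr F_n)$ and the orthogonality defining the projections.
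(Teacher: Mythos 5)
Your proof is correct and is essentially the same as the paper's: the paper simply tests $g$ against $\charfun_{A_n^c}\, g$, which lies in $S_{n-1}$, in a single step, whereas you carry out the identical orthogonality test atom by atom over $\mathscr A_{n-1}\setminus\{A_n\}$. Your closing remarks (the trivial case $\ell_n=0$ and the non-use of \eqref{eq:L1Linfty}) are accurate but inessential.
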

\begin{proof}
	Let $A = A_n^c$. Then, $\charfun_A g \in S_{n-1}$, which implies that
	\[
		0= \langle g, \charfun_A g\rangle = \int_A |g|^2 \dif\mathbb P,
	\]
	which gives that $g$ vanishes a.e. on $A=A_n^c$.
\end{proof}

\begin{rem}\label{rem:Pm}
	Let $n\geq 1$.
Since for $m\in I_n$, the function $df_m$ is orthogonal to
$S_{n-1}$, this implies that 
the support of $df_m$ is a subset of $A_n$ for
$m\in I_n$.

Also this implies that for each $B\in\mathscr F_{n-1}$ and each 
$m\in I_n$ and every $f\in L^1$, we have $R_m(\charfun_B f) =
\charfun_B R_m f$.
\end{rem}

We use the symbol $A(t)\lesssim B(t)$ in order to denote the fact that there
exists a positive  constant $C$ depending only on $c_1,c_2$ from 
\eqref{eq:L1Linfty} and $k$  so that for all $t$
we have the inequality $A(t) \leq C B(t)$, where $t$ denotes all implicit or
explicit dependencies that the objects $A$ and $B$ might have. Similarly, we use
the notations $A(t)\gtrsim B(t)$ and $A(t)\simeq B(t)$.

	\begin{lem}\label{lem:ess_estimate}
		Let $n\geq 1$ and let $\psi\in S_n$ with $\psi\perp S_{n-1}$ and $\|\psi\|_2=1$.

		Then we have the estimates 
		\[
			\|\psi\|_{A_n^c} = 0,\qquad \|\psi\|_{A_n'}\lesssim |A_n'|^{-1/2}
			,\qquad \|\psi\|_{A_n''} \lesssim 
			\frac{|A_n'|^{1/2}}{|A_n|}.
		\]

		If $|\la{A_n}| \geq (1-c_2/2) |A_n|$, the latter estimate can be improved to 
		\begin{equation}\label{eq:improved}
			\|\psi\|_{\la{A_n}} \lesssim \sup_{u\in S} \frac{\|u\|_{\sm{A_n}}}{\|u\|_{A_n}}	 
			\frac{|\sm{A_n}|^{1/2}}{|A_n|}.
		\end{equation}
	\end{lem}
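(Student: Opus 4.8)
\textbf{Proof plan for Lemma~\ref{lem:ess_estimate}.}

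The plan is to reduce everything to two elementary ingredients: first, the local reproduction of $S$ by $P_n$ on atoms of $\mathscr F_n$, together with \eqref{eq:commute} and Lemma~\ref{lem:orth}; second, the comparability of $L^p$-norms of functions in $S$ provided by \eqref{eq:L1Linfty}, which (as noted in the introduction) gives $\|u\|_A \simeq |A|^{-1}\int_A |u|\dif\mathbb P$, and hence also $\|u\|_A \simeq |A|^{-1/2}\|u\charfun_A\|_2$, for all $u\in S$ and all atoms $A\in\mathscr A$. The first statement $\|\psi\|_{A_n^c}=0$ is just Lemma~\ref{lem:orth}. For the second, note $\psi\charfun_{A_n'} \in S(A_n')$, so $\psi = u\charfun_{A_n'}$ for some $u\in S$ on $A_n'$; then $\|\psi\|_{A_n'} = \|u\|_{A_n'}\simeq |A_n'|^{-1/2}\|u\charfun_{A_n'}\|_2 \le |A_n'|^{-1/2}\|\psi\|_2 = |A_n'|^{-1/2}$.

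For the bound on $A_n''$, I would write $\psi = w\charfun_{A_n''}$ with $w\in S$ on $A_n''$, so $\|\psi\|_{A_n''} = \|w\|_{A_n''} \simeq |A_n''|^{-1/2}\|w\charfun_{A_n''}\|_2 \le |A_n''|^{-1/2}$; this already gives a bound, but a crude one. To get the stated $|A_n'|^{1/2}/|A_n|$, the key point is that $\psi$ is \emph{not} free on $A_n''$: since $\psi\perp S_{n-1}$ and $S_{n-1}\supseteq S(A_n)$ (restricted to $A_n$, $S_{n-1}$ looks like $S$ itself, because $A_n\in\mathscr A_{n-1}$), the function $\psi\charfun_{A_n}$ is orthogonal in $L^2(A_n)$ to every $v\in S(A_n)$. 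Picking $v\in S$ with $v\charfun_{A_n''} = w$ matched on the $A_n''$ side — more precisely, extending $w$ (which a priori lives only on $A_n''$) to the element of $S$ it comes from and restricting to $A_n$ — the orthogonality $\int_{A_n} \psi \bar v = 0$ forces $\int_{A_n''}|w|^2 = -\int_{A_n'}\psi\bar v$. Estimating the right side by Cauchy--Schwarz, $\|\psi\charfun_{A_n'}\|_2 \cdot \|v\charfun_{A_n'}\|_2 \le 1\cdot \|v\charfun_{A_n'}\|_2$, and using $\|v\charfun_{A_n'}\|_2 \simeq |A_n'|^{1/2}\|v\|_{A_n'} \le |A_n'|^{1/2}\|v\|_{A_n}$ together with the Remez-type control $\|v\|_{A_n}\simeq \|v\|_{A_n''} = \|w\|_{A_n''}$ (all $L^\infty$-norms of a fixed $S$-function on $A_n$ and on any atom contained in $A_n$ of comparable measure — here $|A_n''|\ge |A_n|/2$ — are comparable, which follows from \eqref{eq:L1Linfty} applied on $A_n$), one gets $\int_{A_n''}|w|^2 \lesssim |A_n'|^{1/2}\|w\|_{A_n''}$. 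Since also $\int_{A_n''}|w|^2 \simeq |A_n''|\,\|w\|_{A_n''}^2 \simeq |A_n|\,\|w\|_{A_n''}^2$, dividing yields $\|w\|_{A_n''} = \|\psi\|_{A_n''} \lesssim |A_n'|^{1/2}/|A_n|$, as desired. I should double-check that the implicit constants depend only on $c_1,c_2,k$: the norm comparabilities on atoms cost constants depending on $c_1,c_2$, and the step comparing $\|v\|_{A_n}$ with $\|v\|$ on a sub-atom of measure $\ge |A_n|/2$ costs a constant depending on $c_1,c_2$ and possibly $k=\dim S$ (via a compactness/finite-dimensionality argument on the unit sphere of $S$), so the total dependence is as claimed.

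For the improved estimate \eqref{eq:improved} under the assumption $|A_n''|\ge (1-c_2/2)|A_n|$: the gain is that in the chain above I should \emph{not} throw away the factor $\|u\|_{A_n'}/\|u\|_{A_n}$ by bounding it by a constant, but carry it along. Concretely, in bounding $\|v\charfun_{A_n'}\|_2 \simeq |A_n'|^{1/2}\|v\|_{A_n'}$ I keep $\|v\|_{A_n'}$ and express it as $\|v\|_{A_n'} = \big(\|v\|_{A_n'}/\|v\|_{A_n}\big)\|v\|_{A_n} \le \sup_{u\in S}\big(\|u\|_{A_n'}/\|u\|_{A_n}\big)\,\|v\|_{A_n}$, and then $\|v\|_{A_n}\simeq\|v\|_{A_n''}=\|w\|_{A_n''}$; here the comparability $\|v\|_{A_n}\simeq\|v\|_{A_n''}$ is legitimate precisely because $|A_n''|\ge(1-c_2/2)|A_n|$, i.e.\ the large sub-atom case, and this is where that hypothesis is used (for a general sub-atom of measure $\ge|A_n|/2$ one still has comparability but the point is the bookkeeping of the $\sm{A_n}$-vs-$A_n$ ratio). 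Dividing through by $\int_{A_n''}|w|^2\simeq|A_n|\,\|w\|_{A_n''}^2$ as before gives $\|\psi\|_{A_n''}\lesssim \sup_{u\in S}\big(\|u\|_{A_n'}/\|u\|_{A_n}\big)\,|A_n'|^{1/2}/|A_n|$.

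The main obstacle I anticipate is the bookkeeping around identifying the various restrictions of $\psi$ with genuine elements of $S$ and keeping the orthogonality relation clean: $\psi$ lives in $S_n$, which on $A_n$ decomposes as $S(A_n')\oplus S(A_n'')$, while $S_{n-1}$ restricted to $A_n$ is (a copy of) $S(A_n)$, i.e.\ functions that are globally a single element of $S$ across all of $A_n$ — so the orthogonality $\psi\perp S_{n-1}$ is a genuine constraint linking the $A_n'$-part and the $A_n''$-part of $\psi$, and extracting the inequality $\int_{A_n''}|w|^2 = -\int_{A_n'}\psi\bar v$ requires being careful that $v$ ranges over \emph{all} of $S(A_n)$, in particular the specific $v$ agreeing with $w$ on the $A_n''$ side exists because $w\in S$. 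Once that identity is in hand, the rest is Cauchy--Schwarz plus the norm equivalences from \eqref{eq:L1Linfty}, which are routine; the only subtlety left is tracking that every constant depends solely on $c_1,c_2,k$, which I would verify by noting each equivalence used is either \eqref{eq:L1Linfty} itself or a finite-dimensional consequence of it.
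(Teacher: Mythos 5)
Your overall strategy is the paper's: the first two estimates come from Lemma~\ref{lem:orth} and the $L^2$-normalization combined with \eqref{eq:L1Linfty}, and the bound on $A_n''$ comes from the orthogonality of $\psi$ to the copy of $S(A_n)$ sitting inside $S_{n-1}$, which yields $\int_{A_n''}|\psi_2|^2=-\int_{A_n'}\psi_1\overline{\psi_2}$, followed by Cauchy--Schwarz/H\"older and the norm equivalences on atoms. Your derivation of \eqref{eq:improved} is correct and identical in substance to the paper's large-atom case.

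There is, however, a genuine gap in your proof of the unimproved estimate $\|\psi\|_{A_n''}\lesssim|A_n'|^{1/2}/|A_n|$: you run the orthogonality argument for \emph{every} $A_n''$ with $|A_n''|\ge|A_n|/2$, and at the crucial step you assert $\|v\|_{A_n}\simeq\|v\|_{A_n''}$ for any sub-atom of measure at least $|A_n|/2$, claiming this follows from \eqref{eq:L1Linfty} on $A_n$ (possibly via finite-dimensionality of $S$). It does not. Inequality \eqref{eq:L1Linfty} on $A_n$ only guarantees that the set where $|v|\ge c_1\|v\|_{A_n}$ has measure $\ge c_2|A_n|$; to force this set to meet $A_n''$ you need $|A_n'|=|A_n\setminus A_n''|<c_2|A_n|$, i.e.\ essentially the hypothesis $|A_n''|\ge(1-c_2/2)|A_n|$ that the paper imposes in its Case II. When $c_2$ is small, a sub-atom of measure exactly $|A_n|/2$ can miss that set entirely: for instance $S=\lin\{f\}$ with $f=\charfun_{[0,1/4]}+\varepsilon\charfun_{(1/4,1]}$ satisfies \eqref{eq:L1Linfty} on $[0,1]$ and on $(1/2,1]$ with $c_1=1$, $c_2=1/4$, yet $\|f\|_{(1/2,1]}=\varepsilon\ll 1=\|f\|_{[0,1]}$. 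A compactness argument on the unit sphere of $S$ cannot rescue this, since the atoms vary; a Remez-type inequality as in Lemma~\ref{lem:remez_general} would give the comparability, but the paper's standing hypothesis is only \eqref{eq:L1Linfty}. The repair is exactly the paper's Case I: when $|A_n''|\le(1-c_2/2)|A_n|$ one has $|A_n'|\ge(c_2/2)|A_n|$, hence $|A_n'|\simeq|A_n''|\simeq|A_n|$, and your own ``crude'' bound $\|\psi\|_{A_n''}\lesssim|A_n''|^{-1/2}$ is already equivalent to $|A_n'|^{1/2}/|A_n|$; the orthogonality argument is then only needed in the regime where the comparability is legitimate. (A minor further point: before dividing by $\|w\|_{A_n''}$ you should note that it is nonzero, which the paper verifies at the outset; otherwise the claimed estimate holds trivially.)
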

	\begin{proof}
		We first show that $\|\psi\|_{A_n'}$ and $\|\psi\|_{A_n''}$ are both positive.
Write $\psi = \psi_1\charfun_{A_n'} + \psi_2\charfun_{A_n''}$ with $\psi_1,\psi_2\in S$
and assume that $\psi_1=0$ a.s. on $A_n'$. Since $\psi$ is orthogonal to $S = S(\Omega)$ and $\psi_2\in S$, 
		we have 
		\[
			0 = \langle \psi, \psi_2\rangle = \int_{A_n''} |\psi_2|^2\dif\mathbb P,
		\]	
		which implies $\psi_2 =0$ a.s. on $A_n''$ and therefore, $\psi = 0$ a.s. contradicting
		the assumption $\|\psi\|_2 = 1$.  Thus we have $\|\psi_1\|_{A_n'} > 0$.
		By a similar argument, testing $\psi$ with $\psi_1$ instead of $\psi_2$, we obtain
		$\|\psi_2\|_{A_n''} >0$.

		The assertion about $\|\psi\|_{A_n^c}$ is just Lemma~\ref{lem:orth}.
		Observe that the $L^2$-normalization condition of $\psi$ becomes
		\begin{equation}\label{eq:normal}
			1 = \int_{A_n} |\psi|^2\dif\mathbb P = \int_{\la{A_n}}
			|\psi|^2\dif\mathbb P + \int_{\sm{A_n}} |\psi|^2 \dif\mathbb P \gtrsim
			 \|\psi\|_{\la{A_n}}^2 |\la{A_n}| + \|\psi\|_{\sm{A_n}}^2 |\sm{A_n}|,
		\end{equation}
		where in the last inequality we used \eqref{eq:L1Linfty} for 
		the atoms $\la{A_n}$ and $\sm{A_n}$, respectively.
		In particular, this yields the estimates
		\begin{equation}\label{eq:est_start}
			\| \psi \|_{\sm{A_n}} \lesssim |\sm{A_n}|^{-1/2},\qquad 
			\| \psi \|_{\la{A_n}} \lesssim |\la{A_n}|^{-1/2}.
		\end{equation}
		Now we distinguish the following cases.

		\textsc{Case I: } Assume $|\la{A_n}| \leq (1-c_2/2) |A_n|$.
		 In this case we have $|A_n| \simeq |\la{A_n}| \simeq |\sm{A_n}|$.
		 Therefore, the second estimate in \eqref{eq:est_start} implies the assertion.

		 \textsc{Case II: } Assume $|\la{A_n}| \geq (1-c_2/2) |A_n|$.
		By orthogonality of $\psi$ to $S$, we have $\langle \psi,\psi_2\rangle =0$.
		Evaluating this condition yields
		\[
			0 = \int_{A_n'}\psi_1 \overline{\psi_2}\dif\mathbb P	+ \int_{A_n''} |\psi_2|^2\dif\mathbb P,
		\]
		which implies the inequality
		\begin{equation}\label{eq:orth2}
			|A_n''|\|\psi_2\|_{A_n''}^2 \lesssim \int_{A_n''} |\psi_2|^2\dif\mathbb P = 
			\Big| \int_{A_n'} \psi_1\overline{\psi_2}\dif\mathbb P\Big| 
			\leq \| \psi_1\|_{A_n'} \|\psi_2\|_{A_n'} |A_n'|.
		\end{equation}
		Thus, since $\|\psi_2\|_{A_n''} > 0$ by the first part of the proof, we divide by $|A_n''|\|\psi_2\|_{A_n''}$ and use
		the estimate for $\|\psi\|_{\sm{A_n}} = \|\psi_1\|_{\sm{A_n}}$ from \eqref{eq:est_start}
		above to get 
		\[
			\|\psi\|_{A_n''} = \|\psi_2\|_{A_n''} \lesssim \frac{\|\psi_2\|_{A_n'}}{\|\psi_2\|_{A_n''}}
			\frac{|A_n'|^{1/2}}{|A_n''|}.
		\]
		 Since $|A_n''| \geq (1-c_2/2)|A_n|$, inequality \eqref{eq:L1Linfty} implies $\|\psi_2\|_{\la{A_n}} \simeq \|\psi_2\|_{A_n}$.
		 Thus, this inequality and the fact that $\psi_2\in S$ in particular implies \eqref{eq:improved}.
	\end{proof}

\begin{thm}\label{thm:Pnbound_2}
	For every integer $n\geq 0$, every $g\in L^1$
	and every $j\in\{0,\ldots,\ell_n\}$, we have the following pointwise inequality:
	\[
		| P_{n,j} g | \lesssim \mathbb E_{n-1} |g| + \mathbb E_n |g|.
	\]
\end{thm}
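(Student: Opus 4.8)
The plan is to reduce the estimate for the general orthoprojector $P_{n,j}$ onto $V_{n,j}$ to two facts: first, that $P_n g$ itself satisfies the claimed bound (the cases $j=0$ and $j=\ell_n$ are $P_{n-1}g$ and $P_n g$), and second, that $P_{n,j}$ can be written as $P_{n-1}$ plus a sum of at most $k$ rank-one orthoprojections onto normalized functions $\psi$ with $\psi\in S_n$, $\psi\perp S_{n-1}$, each of which is controlled by Lemma~\ref{lem:ess_estimate}. Concretely, pick an orthonormal basis $\psi_1,\dots,\psi_{\ell_n}$ of $S_n\ominus S_{n-1}$ adapted to the flag $(V_{n,j})$, so that $V_{n,j} = S_{n-1}\oplus\lin\{\psi_1,\dots,\psi_j\}$; then
\[
	P_{n,j} g = P_{n-1} g + \sum_{i=1}^{j} \langle g,\psi_i\rangle \psi_i .
\]
Since $|P_{n-1}g| \leq \mathbb E_{n-1}|P_{n-1}g| + \text{(const)}\,\mathbb E_{n-1}|g|$ is immediate from \eqref{eq:L1Linfty} applied on atoms of $\mathscr F_{n-1}$ together with $|P_{n-1}g|\lesssim |A|^{-1}\int_A |g|\dif\mathbb P = \mathbb E_{n-1}|g|$ on each such atom $A$ (using that $P_{n-1}g$ restricted to $A$ lies in $S$, and that its $L^\infty$-norm on $A$ is comparable to its mean), it remains to estimate each term $\langle g,\psi_i\rangle\psi_i$ pointwise on $A_n$ (it vanishes off $A_n$ by Lemma~\ref{lem:orth}).

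For a single normalized $\psi = \psi_i$, I would estimate $|\langle g,\psi\rangle|$ by splitting $\int |g\bar\psi|\dif\mathbb P$ over $\sm{A_n}$ and $\la{A_n}$ and bounding $\|\psi\|_{\sm{A_n}}, \|\psi\|_{\la{A_n}}$ via Lemma~\ref{lem:ess_estimate}: this gives
\[
	|\langle g,\psi\rangle| \lesssim |\sm{A_n}|^{1/2}\,\frac{1}{|\sm{A_n}|}\int_{\sm{A_n}}|g|\dif\mathbb P \;+\; \frac{|\sm{A_n}|^{1/2}}{|A_n|}\int_{\la{A_n}}|g|\dif\mathbb P ,
\]
where for the $\la{A_n}$-integral I used the bound $\|\psi\|_{\la{A_n}}\lesssim |\sm{A_n}|^{1/2}/|A_n|$. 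Then for the pointwise size of $\psi$ itself on $A_n = \sm{A_n}\cup\la{A_n}$, I again invoke Lemma~\ref{lem:ess_estimate}: on $\sm{A_n}$, $|\psi|\lesssim |\sm{A_n}|^{-1/2}$, and on $\la{A_n}$, $|\psi|\lesssim |\sm{A_n}|^{1/2}/|A_n|$. Multiplying the two bounds, one checks on each of $\sm{A_n}$ and $\la{A_n}$ separately that $|\langle g,\psi\rangle\,\psi|$ is dominated by a constant times $\mathbb E_{n-1}|g| + \mathbb E_n|g|$: e.g. on $\sm{A_n}$ the product of $|\langle g,\psi\rangle|$ with $|\sm{A_n}|^{-1/2}$ produces the terms $|\sm{A_n}|^{-1}\int_{\sm{A_n}}|g| = \mathbb E_n|g|$ on $\sm{A_n}$ and $|A_n|^{-1}\int_{\la{A_n}}|g|\lesssim |A_n|^{-1}\int_{A_n}|g| = \mathbb E_{n-1}|g|$ on $\sm{A_n}$; the computation on $\la{A_n}$ is analogous, using $|\la{A_n}|\simeq |A_n|$ when needed and the case distinction from Lemma~\ref{lem:ess_estimate}. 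Summing over the at most $k$ functions $\psi_i$ costs only a factor depending on $k$, which is absorbed into $\lesssim$.

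The main obstacle is the regime where $\sm{A_n}$ is very small compared to $A_n$ (Case II of Lemma~\ref{lem:ess_estimate}): there $\|\psi\|_{\sm{A_n}}$ can be as large as $|\sm{A_n}|^{-1/2}$, which blows up, so one must be careful that in the product $\langle g,\psi\rangle\psi$ the large factor on $\sm{A_n}$ is always paired with an integral of $|g|$ over a set of comparable size, keeping averages under control; this is exactly why the cross term $\|\psi\|_{\sm{A_n}}\cdot\frac{1}{|\sm{A_n}|}\int_{\sm{A_n}}|g|$ collapses to $\mathbb E_n|g|$ rather than something worse. A secondary point to handle cleanly is that $\mathbb E_n|g|$ and $\mathbb E_{n-1}|g|$ differ only on $A_n$ (elsewhere they agree), which is consistent with the fact that the correction terms are supported on $A_n$; no issue arises off $A_n$ since there $P_{n,j}g = P_{n-1}g = P_n g$ and the bound is trivial.
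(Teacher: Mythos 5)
Your proposal is correct and takes essentially the same route as the paper: the paper likewise works atom by atom of $\mathscr F_{n-1}$, expands $P_{n,j}g$ in an orthonormal basis of $\operatorname{ran}P_{n,j}$ restricted to $A$ that extends a basis of $S_{n-1}(A)$, and multiplies the bounds from Lemma~\ref{lem:ess_estimate} for $|\langle g,\psi_i\rangle|$ and for $|\psi_i|$ on $\sm{A_n}$ and $\la{A_n}$ exactly as you do. The one loose point is your justification of $|P_{n-1}g|\lesssim \mathbb E_{n-1}|g|$: comparability of $\|P_{n-1}g\|_A$ with the mean of $|P_{n-1}g|$ over $A$ is not by itself enough (you still need $\int_A|P_{n-1}g|\lesssim\int_A|g|$), but this follows from the same rank-one estimate $|\langle g,g_i\rangle g_i|\leq\|g_i\|_A^2\int_A|g|\lesssim |A|^{-1}\int_A|g|$ applied to an orthonormal basis $(g_i)$ of $S_{n-1}(A)$ with $\|g_i\|_A\lesssim|A|^{-1/2}$ from \eqref{eq:L1Linfty}, which is precisely how the paper handles this part.
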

\begin{proof} 
	Assume that $n\geq 0$ and $j\in\{0,\ldots,\ell_n\}$. Let $A$ be an atom of $\mathscr F_{n-1}$
	and let $g_1,\ldots, g_M$ be an orthonormal basis of $\{
		f\charfun_{A} : f\in \operatorname{ran} P_{n,j}\}$ so that
		$g_1,\ldots, g_L$ is an orthonormal basis of $\{f\charfun_{A}
		: f\in S_{n-1}\}$. We know that $L\leq M\leq
		2k$. Additionally, we know that $L<M$ only if $A=A_n$ and $j>0$.
		Thus, in order to estimate $P_{n,j} g = \sum_{i=1}^M \langle
		g,g_i\rangle g_i$ on $A$, it suffices to estimate the expression
			$\langle g, g_i\rangle g_i$
		on $A$ for a fixed $1\leq i\leq M$.

		Firstly, assume that $i\leq L$. Then, since $g_{i}$ is
		$L^2$-normalized and contained
		in $S_{n-1}$ and $A$ is an atom of $\mathscr F_{n-1}$, 
		\eqref{eq:L1Linfty} implies
		\[
			1 = \int_A |g_i|^2 \dif\mathbb P \geq d |A| \|g_i\|_{A}^2
		\]
		with $d=c_1^2c_2$. Therefore, we estimate on $A$ as follows
		\begin{equation}\label{eq:whole}
			|\langle g,g_i\rangle g_i| \leq \int_A |g|\dif\mathbb P
			\cdot \|g_i\|_{A}^2 \leq
			\frac{1}{d|A|}\int_A |g|\dif \mathbb P = d^{-1}\cdot \mathbb E_{n-1}|g|.
		\end{equation}

		Secondly, assume that $L<i\leq M$, which implies $A=A_n$. We
		also assume that $n\geq 1$ since if $n=0$, the result follows in
		the same way as in \eqref{eq:whole}. We have to estimate the
		expression $\langle g,g_i\rangle g_i$ both on $\la{A_n}$ and on
		$\sm{A_n}$. Denote $\ell := \|g_i\|_{\la{A_n}}$ and $s :=
		\|g_i\|_{\sm{A_n}}$. 
		Thus, we can estimate $|\langle g,g_i\rangle g_i|$ on $\la{A_n}$ using 
		Lemma~\ref{lem:ess_estimate} as follows:
		\begin{align*}
			|\langle g,g_i\rangle g_i| &\leq \ell^2 \int_{\la{A_n}}
			|g|\dif\mathbb P + \ell s\int_{\sm{A_n}} |g|\dif\mathbb P \\
			&\lesssim \frac{ | \sm{A_n}|}{|\la{A_n}|^2} \int_{\la{A_n}}
			|g|\dif\mathbb P + \frac{1}{|A_n|} \int_{\sm{A_n}} |g|\dif\mathbb P
			\leq \mathbb E_{n-1}|g| + \mathbb E_{n}|g|
		\end{align*}
		and similarly on $\sm{A_n}$
		\begin{align*}
			|\langle g,g_i\rangle g_i| &\leq \ell s \int_{\la{A_n}}
			|g|\dif\mathbb P + s^2\int_{\sm{A_n}} |g|\dif\mathbb P  \\
			&\lesssim \frac{1}{|A_n|} \int_{\la{A_n}}
			|g|\dif\mathbb P +  \frac{1}{|\sm{A_n}|}\int_{\sm{A_n}} |g|\dif\mathbb P
			\leq \mathbb E_{n-1}|g| + \mathbb E_{n}|g|.
		\end{align*}
		This completes the proof of the theorem.
\end{proof}

\begin{example}\label{ex:three}
Here we give an example to show that the assertion of Theorem~\ref{thm:Pnbound_2} is not true in general 
if we split the set $A_n$ into more than two subsets.
We consider the case $\Omega=[0,1]$,
Lebesgue measure $|\cdot|$,  $k=1$ and $S = \lin\{\charfun_{\Omega}\}$.
Let $C_{1}\cup C_{2}\cup C_{3}$ be a partition of $\Omega$ with the following properties:
$|C_{1}| = |C_{2}|$ and $|C_{3}| = \varepsilon$ for some small parameter $\varepsilon>0$.
Define
\[
	h = \alpha_1 \charfun_{C_{1}} - \alpha_2\charfun_{C_{2}} + \alpha_3\charfun_{C_{3}}
\]
where we choose the positive numbers $\alpha_1,\alpha_2\simeq 1$ and 
$\alpha_3\simeq |C_{3}|^{-1/2}=\varepsilon^{-1/2}$ 
such that $\mathbb E h=0$ and $\mathbb E h^2=1$.
Let $g$ be a function with $g=0$ on $C_{3}$ that has the same sign as $h$ on $C_{1}\cup C_{2}$.
Then, we evaluate for $t\in C_{3}$
\[
\langle g, h\rangle h(t) = \int_{C_{1}\cup C_{2}} |g(x)| |h(x)|\dif x \cdot h(t) 
\gtrsim \varepsilon^{-1/2}\int_{C_{1}\cup C_{2}} |g(x)|\dif x.
\]
This implies that an inequality as in Theorem~\ref{thm:Pnbound_2} is not possible as $\varepsilon>0$
was arbitrary.
For more results in this direction, we refer to Section~\ref{sec:more_than_two}.
\end{example}
\subsection{Pointwise and $L^p$-limits of $(P_{n,j}g$)}
In this section we use Theorem~\ref{thm:Pnbound_2} to identify the pointwise a.e. and 
$L^p$-norm limit
of the sequence $(P_{n,j}g)$ for $g\in L^p$ with $1\leq p< \infty$.
As $\cup_n S_n$ may not be dense in $L^p$, this limit need not be the function $g$ itself.

Denote by $\psi_{n,j}$ the unique (up to sign) function contained in
$V_{n,j}$ that is orthonormal to $V_{n,j-1}$.
A simple consequence of Theorem~\ref{thm:Pnbound_2} is that the operators $P_{n,j}$ and
$R_m$ are uniformly bounded on $L^p$ for $1\leq p\leq \infty$ by a constant that depends only on
$c_1,c_2\in (0,1]$ from \eqref{eq:L1Linfty} and $k$.
This implies that $(\psi_{n,j})$ is a basis in  $U_p=\overline{\operatorname{span}}_{L^p} (\cup_n S_n)$ 
for each $1\leq p\leq \infty$, and the operators $P_{n,j}$ are the respective partial sums.

Denote by $P_\infty$ the orthoprojector onto $U_2$. Then, we identify the $L^p$-norm limit of 
$(P_{n,j}g)$ for $g\in L^p$ with $1\leq p<\infty$ as follows.
\begin{prop}
	For $1\leq p<2$, the operator $P_\infty$ extends to a bounded operator on $L^p$
	and for $2\leq p \leq \infty$, the operator $P_\infty$ is bounded on $L^p$.
	In both cases, $P_\infty:L^p \to L^p$ is a projection onto $U_p$.

	Moreover, for all $1\leq p<\infty$ and $g\in L^p$, we have 
	\[
		\| P_{n,j} g - P_\infty g\|_p \to 0.
	\]
\end{prop}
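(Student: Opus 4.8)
The plan is to prove the two assertions separately, the boundedness of $P_\infty$ first and then the convergence statement. For the boundedness, I would start from the pointwise bound in Theorem~\ref{thm:Pnbound_2}, which gives that each $P_{n,j}$ is bounded on $L^p$ uniformly in $n,j$ (for $1\le p\le\infty$) with a constant depending only on $c_1,c_2,k$; this is already noted in the excerpt. For $2\le p\le\infty$, $P_\infty$ is the $L^2$-orthoprojector onto $U_2$, hence bounded on $L^2$, and it agrees with the pointwise limit of the $P_{n,j}$ (which converge in $L^2$ to $P_\infty$ on $L^2$); combining the uniform $L^p$-bounds with a density argument on $L^2\cap L^p$ and Fatou's lemma shows $P_\infty$ maps $L^p$ to $L^p$ with the same constant. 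For $1\le p<2$ I would argue by duality: for $g\in L^2\cap L^p$ and $h\in L^2\cap L^{p'}$ one has $\langle P_\infty g,h\rangle = \langle g,P_\infty h\rangle$ by self-adjointness of the $L^2$-projection, and $|\langle g,P_\infty h\rangle|\le \|g\|_p\|P_\infty h\|_{p'}\lesssim \|g\|_p\|h\|_{p'}$ using the already-established $L^{p'}$-bound (since $p'>2$). Taking the supremum over such $h$ and using density of $L^2\cap L^p$ in $L^p$ gives the bounded extension.

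Next I would check that the extended operator is a projection onto $U_p$. On the dense subspace $L^2\cap L^p$ we have $P_\infty^2=P_\infty$ and $P_\infty$ fixes $\cup_n S_n$; by continuity on $L^p$ these identities persist, so $P_\infty$ fixes $U_p=\overline{\operatorname{span}}_{L^p}(\cup_n S_n)$. That $\operatorname{ran}P_\infty\subseteq U_p$ follows since, for $g\in L^2\cap L^p$, $P_\infty g=\lim_{m}R_m g$ in $L^2$ and also $R_m g\to P_\infty g$ in $L^p$ by the dominated/uniform-bound argument below, so $P_\infty g\in U_p$; then pass to general $g\in L^p$ by density and continuity.

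For the convergence $\|P_{n,j}g-P_\infty g\|_p\to 0$, I would first treat $g\in\cup_n S_n$, where the sequence is eventually constant and equal to $g=P_\infty g$, so the statement is trivial. For general $g\in L^p$, fix $\varepsilon>0$ and pick $h\in\cup_n S_n$ with $\|g-h\|_p<\varepsilon$; then
\[
\|P_{n,j}g-P_\infty g\|_p \le \|P_{n,j}(g-h)\|_p + \|P_{n,j}h-P_\infty h\|_p + \|P_\infty(h-g)\|_p,
\]
and the first and third terms are $\lesssim\varepsilon$ by the uniform $L^p$-bounds on $P_{n,j}$ and on $P_\infty$, while the middle term vanishes for large $m$. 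This gives $\limsup_m\|P_{n,j}g-P_\infty g\|_p\lesssim\varepsilon$, and letting $\varepsilon\to 0$ finishes the argument. The one point that requires care is that $\cup_n S_n$ is dense in $U_p$ by definition, so the approximation $h\in\cup_n S_n$ is available for any $g$ after first projecting: more precisely one should split $g=P_\infty g+(g-P_\infty g)$, note $P_{n,j}(g-P_\infty g)\to 0$ needs that $g-P_\infty g\perp U_2$ hence $R_m(g-P_\infty g)=0$ for $g\in L^2\cap L^p$, extend by density, and apply the above $\varepsilon$-argument to $P_\infty g\in U_p$.

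The main obstacle I anticipate is the $1\le p<2$ boundedness of $P_\infty$: unlike $p\ge 2$, $P_\infty$ is not a priori defined on $L^p$, and one must manufacture it by the duality argument, being careful that the duality pairing is justified only on $L^2\cap L^{p}$ against $L^2\cap L^{p'}$ and that $L^2\cap L^p$ is dense in $L^p$ for $p<\infty$ on a probability space (which it is, since $L^2\subseteq L^p$ here, so in fact $L^2$ itself is dense). Everything else is a routine density-plus-uniform-bound interchange of limits.
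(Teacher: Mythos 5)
Your overall strategy is sound and in places genuinely different from the paper's: you establish boundedness for $2\le p\le\infty$ first (via a.e.\ convergence of a subsequence of $P_{n,j}f$ plus Fatou and the uniform bounds from Theorem~\ref{thm:Pnbound_2}) and then obtain $1\le p<2$ by duality, whereas the paper goes the other way, constructing $P_\infty$ on $L^p$ for $p<2$ directly as the $L^p$-limit of the Cauchy sequence $(P_{n,j}g)$ (which simultaneously yields the norm convergence for $p<2$) and then treating $p>2$ by duality. Both orders are legitimate; your Fatou argument for $p\ge 2$ and your duality argument for $p<2$ are correct, as is your treatment of idempotency and of the convergence statement for $p<2$.

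There is, however, a genuine circularity in your proof that $P_\infty(L^p)\subseteq U_p$ when $2<p<\infty$. You assert that for $g\in L^p$ one has $R_m g\to P_\infty g$ in $L^p$ ``by the argument below'' and conclude $P_\infty g\in U_p$; but the argument below (approximating by elements of $\cup_n S_n$ and using that $P_{n,j}h=h$ eventually) requires knowing in advance that $P_\infty g$ lies in $U_p=\overline{\operatorname{span}}_{L^p}(\cup_n S_n)$, which is exactly what is to be proved. For $p<2$ there is no issue, since $L^2$-convergence implies $L^p$-convergence on a probability space; but for $p>2$ you only know $R_m g\to P_\infty g$ in $L^2$, and a norm-bounded, $L^2$-convergent sequence need not converge in $L^p$. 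The missing ingredient is the paper's appeal to Mazur's theorem: $U_p$ is convex and norm-closed, hence weakly closed, and $P_{n,j}f\rightharpoonup P_\infty f$ weakly in $L^p$ (tested against $g\in L^{p'}$ using the already-established strong convergence $P_{n,j}g\to P_\infty g$ in $L^{p'}$ for $p'<2$); this yields $P_\infty f\in U_p$, after which the norm convergence $P_{n,j}f\to P_\infty f$ in $L^p$ follows from your $\varepsilon$-argument (equivalently, from $(\psi_{n,j})$ being a basis of $U_p$ with partial sums $P_{n,j}$). With this one step inserted, your proof is complete.
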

\begin{proof}
	To simplify notation, we assume that the orthonormal basis $(\psi_{n,j})$ consists only 
	of real-valued functions. In the complex-valued case, we need to consider partial sum 
	operators $P_{n,j}$ with respect to the two orthonormal systems $(\psi_{n,j})$ and $(\overline{\psi_{n,j}})$.

	First, assume $1\leq p<2$. In this case, the argument is analogous as in 
	 P.F.X. M\"uller, M. Passenbrunner \cite[Section 3]{MP2020}, but we present it for the sake of the completeness.
	Let $g\in L^p$ and $\varepsilon>0$. Since $L^2$ is dense in 
	$L^p$, we can choose $f\in L^2$ with the property $\|g-f\|_p<\varepsilon$.
	Now, choose $N_0$ sufficiently large so that for all $m,n>N_0$ and $i,j$ we have $\|(P_{n,j} - P_{m,i})f\|_2 < \varepsilon$.
	Then we obtain by the uniform boundedness of $\|P_{n,j} : L^p\to L^p\|$
	\begin{align*}
		\| (P_{n,j} - P_{m,i})g \|_p &\leq \|(P_{n,j}-P_{m,i})(g-f)\|_p + \|(P_{n,j}-P_{m,i})f\|_p	\\
		&\lesssim \varepsilon + \|(P_{n,j}-P_{m,i}) f\|_2 \lesssim \varepsilon.
	\end{align*}
	This implies that $P_{n,j}g$ converges in $L^p$ to some limit $P_{\infty,p}g$. 
	Additionally, $\| P_{\infty,p} : L^p \to L^p\| \leq \sup_{n,j} \| P_{n,j} : L^p\to L^p\|$
	and $P_{\infty,p}$ coincides with $P_\infty$ on $L^2$.
	Next, observe that $P_{\infty,1}$ coincides with $P_{\infty,p}$ on $L^p$ for each $1\leq p \leq2$.
	Therefore, we use the symbol $P_\infty$ to denote $P_{\infty,p}$ for each $1\leq p \leq 2$.
	Observe that $P_\infty:L^p \to L^p$ is a projection onto $U_p$.

Consider the case of $2 < p < \infty$. For $f \in L^p$, $P_\infty f $ is well defined as an element of $L^2$. We need to 
check that $P_\infty:L^p \to L^p$, and moreover it is a projection with the range $U_p$.
Indeed, let $f \in L^p $ and $g \in L^2$. Then, by the boudedness of $P_\infty:L^{p'} \to L^{p'}$ ($p'=p/(p-1)$), 
$$
|\langle P_\infty f, g\rangle| = |\langle f, P_\infty g\rangle | \lesssim \| f \|_p \| P_\infty g\|_{p'}
\leq C \| f \|_p \| g \|_{p'}.
$$
As $L^2$ is dense in $L^{p'}$, this implies that $P_\infty f \in L^p$ and $P_\infty : L^p \to L^p$ is bounded.
The above argument implies also that $P_\infty :L^p \to L^p$ and $P_\infty: L^{p'} \to L^{p'}$ are a pair of Banach space adjoint operators.
Observe that $P_\infty = \operatorname{Id}$ on $\cup_n S_n$ and therefore
$P_\infty = \operatorname{Id}$ on $U_p$. 
Next, we check that $P_\infty f \in U_p$. Clearly, $U_p$ is a convex and norm-closed subset of $L^p$. Therefore, by Mazur's theorem, 
it is enough to see that $P_{n,j} f \to P_\infty f$ weakly in $L^p$ for $f \in L^p$.
 To see this, take  $g \in L^{p'}$.
Using duality and the already proved part   for $1 < p' < 2$
we get
$$
| \langle P_\infty f - P_{n,j} f, g \rangle|
= |\langle f, P_\infty g - P_{n,j} g\rangle| \leq \| f \|_p \| P_\infty g - P_{n,j} g \|_{p'}
\to 0.
$$
Thus, $P_\infty f \in U_p$. 
Consequently, $P_\infty: L^p \to L^p$ is  a bounded projection onto $U_p$.
As $(\psi_{n,j})$ is a basis in $U_p$, it follows that $P_{n,j} f = P_{n,j} (P_\infty f ) \to P_\infty f $ in $L^p$ norm. 
\end{proof}

Using the operator $P_\infty$ on $L^1$, we obtain the following result concerning pointwise 
convergence.
\begin{prop}
	For $g\in L^1$, the sequence $(P_{n,j}g)$ converges $\mathbb P$-a.e to $P_\infty g$,
\end{prop}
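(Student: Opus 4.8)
The plan is to deduce the a.e.\ convergence from the maximal inequality implicit in Theorem~\ref{thm:Pnbound_2}, combined with the norm convergence already proved for $L^2$ (or $L^p$, $p>1$) and a standard density/truncation argument.

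First I would record the maximal inequality. Theorem~\ref{thm:Pnbound_2} gives, for every $g\in L^1$ and every pair $(n,j)$,
\[
|P_{n,j}g| \lesssim \mathbb E_{n-1}|g| + \mathbb E_n|g| \leq 2\sup_{k} \mathbb E_k|g| =: 2Mg,
\]
so that the maximal operator $P^*g := \sup_{n,j}|P_{n,j}g|$ satisfies $P^*g \lesssim Mg$ pointwise, where $M$ is Doob's martingale maximal operator associated to the filtration $(\mathscr F_n)$. By Doob's weak $(1,1)$ inequality, $M$ — and hence $P^*$ — is of weak type $(1,1)$: $|\{P^*g > \lambda\}| \lesssim \|g\|_1/\lambda$. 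This is the only ingredient that needs the pointwise bound; everything else is soft.

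Next I would run the usual argument. Fix $g\in L^1$. Given $\varepsilon>0$, split $g = h + r$ where $h\in L^2$ (in fact we can even take $h\in\cup_n S_n$, but $L^2$ suffices) and $\|r\|_1 < \varepsilon$; this is possible since $L^2$ is dense in $L^1$. For the good part $h$, the previous proposition gives $P_{n,j}h \to P_\infty h$ in $L^2$, and passing to a subsequence we at least know such norm convergence implies we can control things; more directly, since the $L^2$-limit of $(P_{n,j}h)$ exists and $(\psi_{n,j})$ is a basis of $U_2$, the partial sums $P_{n,j}h$ of the orthogonal expansion of $P_\infty h\in L^2\subset U_2$ converge a.e.\ — here I would invoke that $(\psi_{n,j})$, being an orthonormal basis of $U_2$ with uniformly $L^\infty$-bounded partial-sum projections and with the maximal operator $P^*$ of weak type $(1,1)$ (hence, by interpolation with the trivial $L^\infty$ bound, of strong type $(2,2)$), satisfies Carleson-type a.e.\ convergence on $L^2$ by the standard Banach principle / Stein maximal principle argument. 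Then for the oscillation of $P_{n,j}g$ we estimate, for any $\lambda>0$,
\[
\Big|\Big\{ \limsup_{(n,j),(m,i)} |P_{n,j}g - P_{m,i}g| > \lambda \Big\}\Big|
\leq \Big|\Big\{ 2P^*r > \lambda \Big\}\Big| \lesssim \frac{\|r\|_1}{\lambda} < \frac{\varepsilon}{\lambda},
\]
using that the oscillation of $P_{n,j}h$ tends to $0$ a.e. Letting $\varepsilon\to 0$ shows the limsup of the oscillation is $0$ a.e., i.e.\ $(P_{n,j}g)$ is a.e.\ Cauchy, hence converges a.e.\ to some function $G$. Finally, since for $g\in L^1\cap L^2$ we have $P_{n,j}g\to P_\infty g$ in $L^2$, a subsequence converges a.e.\ to $P_\infty g$, so $G = P_\infty g$ on the dense set $L^1\cap L^2$; and for general $g\in L^1$, approximating in $L^1$ and using weak type $(1,1)$ of $P^*$ once more identifies $G = P_\infty g$ a.e.

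The main obstacle is the a.e.\ convergence of $P_{n,j}h \to P_\infty h$ for $h\in L^2$, i.e.\ showing that the good part behaves well: one must be slightly careful that the enumeration $(\psi_{n,j})$ is a genuine basis with partial sums $P_{n,j}$ and that the associated maximal operator controls the pointwise oscillation. This is handled by the Stein–Banach maximal principle: the maximal operator $P^*$ is bounded on $L^2$ (by interpolating weak $(1,1)$ with $L^\infty\to L^\infty$, or directly since $Mg\in L^2$ for $g\in L^2$), and a.e.\ convergence holds on the dense subset $\cup_n S_n$ trivially (the sequence is eventually constant there), so it holds on all of $L^2$. Everything else is the routine two-$\varepsilon$ splitting above, and I would not spell out those calculations in detail.
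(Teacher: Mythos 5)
Your proof follows essentially the same route as the paper's: the weak type $(1,1)$ bound for the maximal operator $P^*$ obtained from Theorem~\ref{thm:Pnbound_2} and Doob's inequality, combined with trivial convergence on $\cup_n S_n$ and the standard Banach-principle/two-$\varepsilon$ argument (the paper simply cites Garsia's book for this routine part). One imprecision: $\cup_n S_n$ is dense in $U_2$, not in $L^2$ (the paper explicitly warns about this), so your phrase ``so it holds on all of $L^2$'' needs the extra, easy, observation that $P_{n,j}=P_{n,j}P_\infty$, i.e.\ $P_{n,j}$ annihilates $U_2^{\perp}$; one then either enlarges the dense set to $\cup_n S_n + U_2^{\perp}$ or, as the paper does, reduces at the outset to $g$ in the range of $P_\infty$. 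With that patch your argument is complete and matches the paper's.
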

\begin{proof}
It suffices to prove the assertion for $g$ contained 
	in the range $U_1$ of $P_\infty$.	But this follows from a standard argument as in \cite[pp. 3--4]{Garsia1970}
	(see also \cite[Theorem 3.2]{MP2020})
	using the following two ingredients:
	\begin{enumerate}
		\item The space $\cup_n S_n$ is dense $U_1$ and
		 for $v\in\cup_n S_n$ we have $P_{n,j} v = v$ for sufficiently large 
			$n$ and therefore in particular $P_{n,j}v \to v$ pointwise.
		\item The maximal operator $P^*f(x) :=\sup_{n,j} |P_{n,j}f(x)|$ is by Theorem~\ref{thm:Pnbound_2}
			bounded by the corresponding martingale maximal function, which, by Doob's maximal inequality,
			is of weak type 1-1. Therefore, the maximal operator $P^*$ is of weak type 1-1 as well. \qedhere
	\end{enumerate}
\end{proof}

\section{Decomposition of $L^1$-functions}\label{sec:decomp}
Let $T$ be a stopping time with respect to the filtration $(\mathscr F_n)$.
Moreover, let $f\in L^1$.
We set $g_n := P_n f$ for
$n\geq -1$.
Then, we define $g_{T} := (g_{T(x)})(x)$ if $T(x) < \infty$ and we set $g_T(x)=
f(x)$ if $T(x)=\infty$.
\begin{prop}\label{prop:stopping}
	Let $T$ be a stopping time and $f\in L^1$. With the above notation, we
	have
	\begin{enumerate}
		\item $\mathbb E |g_T| \lesssim \mathbb E|f|$.
		\item If $n\geq 0$ and  $j\in\{1,\ldots,\ell_n\}$ we have 
			\[
				\charfun_{\{T\geq n\}} (P_{n,j}f)  = \charfun_{\{T\geq
				n\}}(P_{n,j} g_T).
			\]
	\end{enumerate}
\end{prop}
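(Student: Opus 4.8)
The plan is to prove the two assertions by analyzing the structure of the stopping time $T$ and exploiting the locality built into the operators $P_{n,j}$ via Remark~\ref{rem:Pm} and the pointwise bound of Theorem~\ref{thm:Pnbound_2}.

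For part (1), I would decompose $\Omega$ according to the value of $T$. On the set $\{T=n\}$ (which belongs to $\mathscr F_n$) we have $g_T = P_n f = g_n$, and on $\{T=\infty\}$ we have $g_T = f$. Since $\{T\ge n\}\in\mathscr F_{n-1}$ and $\{T=\infty\}=\bigcap_n\{T\ge n\}$, the natural approach is to write $\mathbb E|g_T| = \sum_{n\ge 0}\mathbb E(\charfun_{\{T=n\}}|P_nf|) + \mathbb E(\charfun_{\{T=\infty\}}|f|)$. Using the pointwise bound $|P_nf|\lesssim \mathbb E_{n-1}|f| + \mathbb E_n|f|$ from Theorem~\ref{thm:Pnbound_2} (the $j=\ell_n$, i.e.\ $P_{n,\ell_n}=P_n$ case), and the fact that $\{T=n\}\in\mathscr F_n$ while $\{T\ge n\}\in\mathscr F_{n-1}$, I would bound each term by integrals of the martingale maximal function $f^* := \sup_n \mathbb E_n|f|$ over the disjoint sets $\{T=n\}$; summing the disjoint pieces gives $\mathbb E|g_T|\lesssim \mathbb E(f^*\charfun_{\{T<\infty\}}) + \mathbb E(|f|\charfun_{\{T=\infty\}})$. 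The last step is to recognize that on $\{T=\infty\}$ one has $|f| = \lim_n |\mathbb E_n f|\le f^*$ by the martingale convergence theorem (or, more simply, absorb this term by noting $f^*\ge |f|$ is false in general but $\mathbb E(|f|\charfun_{\{T=\infty\}})\le \mathbb E(f^*\charfun_{\{T=\infty\}})$ does hold after passing through conditional expectations on the $\mathscr F_{n-1}$-sets $\{T\ge n\}$), so altogether $\mathbb E|g_T|\lesssim \mathbb E f^* \lesssim \mathbb E|f|$ by Doob's maximal inequality (or rather its $L^1$-version — here one should be careful, since $f^*$ need not be integrable; the cleaner route is to keep the estimate localized, bounding $\sum_n\int_{\{T=n\}}\mathbb E_{n-1}|f|$ directly against $\int_\Omega |f|$ by telescoping the stopping-time structure, which avoids invoking integrability of $f^*$).

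For part (2), the key observation is that $\{T\ge n\}\in\mathscr F_{n-1}$, so by the commutation relation in Remark~\ref{rem:Pm}, for any $h\in L^1$ and any $m\in I_n$ (equivalently for $P_{n,j}$ with $j\ge 1$) we have $\charfun_{\{T\ge n\}} P_{n,j} h = P_{n,j}(\charfun_{\{T\ge n\}} h)$ — wait, more precisely $R_m(\charfun_B h) = \charfun_B R_m h$ for $B\in\mathscr F_{n-1}$, and the relevant statement is the version for $P_{n,j}$. Hence it suffices to show $\charfun_{\{T\ge n\}}(f - g_T)$ is orthogonal to $V_{n,j}$, or rather that $P_{n,j}(\charfun_{\{T\ge n\}}(f-g_T)) = 0$. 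Now on $\{T\ge n\}$ we split further: on $\{T\ge n\}\cap\{T=\infty\}$ we have $g_T = f$, so the difference vanishes there; on $\{T\ge n\}\cap\{T=\ell\}$ for $\ell\ge n$ we have $g_T = P_\ell f$, and $\{T=\ell\}\in\mathscr F_\ell\supseteq\mathscr F_{n-1}$. The plan is to show that $\charfun_{\{T\ge n\}}(f - g_T)$ is orthogonal to $S_{n-1}$-localized functions supported appropriately, and more importantly that applying $P_{n,j}$ kills it because $f - P_\ell f \perp S_\ell \supseteq S_{n-1} \subseteq V_{n,j-1}$ is not quite enough — one needs $P_{n,j}(f - P_\ell f) = 0$ on the relevant pieces, which follows since $V_{n,j}\subseteq S_n\subseteq S_\ell$ for $\ell\ge n$, hence $P_{n,j}P_\ell = P_{n,j}$ and $P_{n,j}(f - P_\ell f) = 0$. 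Combined with the localization $\charfun_B$ for $B\in\mathscr F_{n-1}$ passing through $P_{n,j}$, this yields the claim after summing over the disjoint $\mathscr F_{n-1}$-measurable (after enlargement) pieces.

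\textbf{Main obstacle.} I expect the principal difficulty in part (1) to be handling the $\{T=\infty\}$ contribution and the non-integrability of the maximal function $f^*$ cleanly: the honest approach is to avoid $f^*$ entirely and telescope, $\sum_n \int_{\{T=n\}}(\mathbb E_{n-1}|f| + \mathbb E_n|f|)\dif\mathbb P$, using that $\{T=n\}, \{T\ge n\}$ are measurable with respect to the right $\sigma$-algebras so that these integrals become $\int_{\{T=n\}}|f|$-type quantities over disjoint sets, summing to at most a constant times $\mathbb E|f|$; getting the bookkeeping exactly right (particularly the interplay of $\{T=n\}\in\mathscr F_n$ versus the appearance of $\mathbb E_{n-1}$) is the delicate point. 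For part (2), the subtlety is purely organizational: carefully justifying that $\charfun_{\{T\ge n\}}$ commutes with $P_{n,j}$ (for $j\ge 1$) via Remark~\ref{rem:Pm}, and that on each piece $\{T=\ell\}$ with $\ell\ge n$, one indeed has $P_{n,j}P_\ell = P_{n,j}$ so the difference is annihilated — no real analytic obstacle, just care with the decomposition of $\{T\ge n\}$ into $\mathscr F_{n-1}$-compatible pieces.
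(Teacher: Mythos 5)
Your part (2) is correct and is essentially the paper's own argument: decompose $\{T\geq n\}$ into $\{T=\infty\}$ and the sets $\{T=\ell\}$, $\ell\geq n$, commute the indicators through the projections using \eqref{eq:commute} and Remark~\ref{rem:Pm} (which applies since $\{T\geq n\}\in\mathscr F_{n-1}$ and $\{T=\ell\}\in\mathscr F_\ell$), and use $P_{n,j}P_\ell=P_{n,j}$ for $\ell\geq n$. The only thing to add is that the sum $\sum_{\ell\geq n}\charfun_{\{T=\ell\}}g_T$ converges in $L^1$, so that the $L^1$-continuity of $P_{n,j}$ justifies interchanging it with the projection.

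Part (1), however, has a genuine gap, located exactly at the point you flag as ``the delicate point'' --- and neither of your two proposed routes closes it. The bound via $\mathbb E(f^*\charfun_{\{T<\infty\}})$ fails because the martingale maximal function is not bounded on $L^1$, and the ``cleaner'' localized quantity $\sum_n\int_{\{T=n\}}\mathbb E_{n-1}|f|\dif\mathbb P$ is genuinely \emph{not} $\lesssim\mathbb E|f|$: no telescoping can repair it. Concretely, take $\Omega=[0,1]$ with Lebesgue measure, $S=\lin\{\charfun_\Omega\}$, the binary filtration that at step $n$ splits $[0,2^{-n+1}]$ into $[0,2^{-n}]$ and $(2^{-n},2^{-n+1}]$, the function $f=\charfun_{[0,\varepsilon]}$, and the stopping time with $\{T=n\}=(2^{-n},2^{-n+1}]$. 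Then $\mathbb E_{n-1}|f|=2^{n-1}\varepsilon$ on $[0,2^{-n+1}]\supseteq\{T=n\}$, so $\int_{\{T=n\}}\mathbb E_{n-1}|f|\dif\mathbb P=\varepsilon/2$ for every $n\leq 1+\log_2(1/\varepsilon)$, and the sum is of order $\varepsilon\log(1/\varepsilon)$, while $\mathbb E|f|=\varepsilon$. The root cause is the order of operations: you apply the pointwise bound of Theorem~\ref{thm:Pnbound_2} to the \emph{global} function $f$ and only afterwards restrict to $\{T=n\}$; the term $\mathbb E_{n-1}|f|$ transports mass from outside $\{T=n\}$ into $\{T=n\}$, and these contributions accumulate over $n$.

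The correct order --- and the paper's proof --- is to localize first and estimate second. Since $\{T=n\}\in\mathscr F_n$, identity \eqref{eq:commute} gives $\charfun_{\{T=n\}}g_n=P_n(\charfun_{\{T=n\}}f)$, and the uniform $L^1$-boundedness of $P_n$ (which does follow from Theorem~\ref{thm:Pnbound_2}, by integrating the pointwise bound over all of $\Omega$) then yields $\int_{\{T=n\}}|g_n|\dif\mathbb P\lesssim\int_{\{T=n\}}|f|\dif\mathbb P$. Summing over the disjoint sets $\{T=n\}$ and adding $\int_{\{T=\infty\}}|f|\dif\mathbb P$ gives $\mathbb E|g_T|\lesssim\mathbb E|f|$.
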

\begin{proof}
	We have by \eqref{eq:commute} and the uniform $L^1$-boundedness of
	the operators $P_{n,j}$
\begin{align*}
	\mathbb E |g_T| &= \int_{\{T=\infty\}} |f| \dif\mathbb P + \sum_n
	\int_{\{T=n\}} |g_n|\dif \mathbb P  \\
	&\lesssim \int_{\{T=\infty\}} |f| \dif\mathbb P + \sum_n \int_{\{T=n\}}
	|f|\dif \mathbb P = \mathbb E|f|,
\end{align*}
which proves item (1). 

	Now we prove item (2):
	since the set $\{T\geq n\}$ is $\mathscr
	F_{n-1}$-measurable and by Remark~\ref{rem:Pm}, we see that
	\begin{align*}
		\charfun_{\{T\geq n\}} (P_{n,j} g_T) &= P_{n,j} ( \charfun_{\{T\geq n\}}
		g_T) = P_{n,j} \Big(\big(\charfun_{\{T=\infty\}} + \sum_{\ell\geq
		n}\charfun_{\{T=\ell\}} \big) g_T\Big) \\
		&= P_{n,j} \Big(
		\charfun_{\{T=\infty\}} f + \sum_{\ell\geq n}
		\charfun_{\{T=\ell\}} P_\ell f \Big)
	\end{align*}
	with the series converging in $L^1$. By \eqref{eq:commute} and the
	continuity of $P_{n,j}$ on $L^1$, we obtain
	\begin{align*}
		\charfun_{\{T\geq n\}} (P_{n,j} g_T) &=  P_{n,j}( \charfun_{\{T=\infty\}}
		f) + \sum_{\ell\geq n} P_{n,j} \big(P_\ell ( \charfun_{\{T=\ell\}} f)
		\big) \\
		&= P_{n,j}( \charfun_{\{T=\infty\}} f) + \sum_{\ell\geq n}
		P_{n,j}(\charfun_{\{T=\ell\}} f),
	\end{align*}
	where the last equation is true since the range of the projector
	$P_{n,j}$ is a subset of the range of the projector $P_\ell$ for $\ell\geq n$.
	We thus obtain
	\[
		\charfun_{\{T\geq n\}} (P_{n,j} g_T) = P_{n,j}\Big( \big(
		\charfun_{\{T=\infty\}} + \sum_{\ell\geq n}
		\charfun_{\{T=\ell\}}\big)f\Big)  = P_{n,j} ( \charfun_{\{T\geq n\}}f)  =
		\charfun_{\{T\geq n\}} (P_{n,j}  f), 
	\]
	since the set $\{T\geq n\}$ is $\mathscr F_{n-1}$-measurable.
\end{proof}

Utilizing the above results---in particular Theorem~\ref{thm:Pnbound_2}---we next show that  an
integrable function $f$ admits a decomposition similar to Gundy's
decomposition for martingales (see e.g. \cite{Pisier2016}).
\begin{thm}\label{thm:gundy_like} 
	Let $f\in S_N$ for some positive integer $N$ with $\mathbb E|f|\leq 1$. 
	Then, for each $\lambda>0$ there exists a
	decomposition $f=a+b+c$ with $a,b,c\in L^1$ satisfying
	\begin{enumerate}
		\item $\| a\|_1 \lesssim 1$ and $\mathbb P( \sup_m
			|da_m|\neq 0) \lesssim \lambda^{-1}$.
		\item $\big\| \sum_m |db_m| \big\|_1 \lesssim 1$.
		\item $\| c\|_1 \lesssim 1$ and $\|c\|_\infty \lesssim \lambda$.
	\end{enumerate}
\end{thm}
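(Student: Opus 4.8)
The plan is to mimic the classical Gundy/Calder\'on--Zygmund decomposition for martingales, using the stopping time machinery of Proposition~\ref{prop:stopping} and the pointwise bound of Theorem~\ref{thm:Pnbound_2} as the substitutes for the martingale estimates. First I would introduce the relevant stopping time: since $f\in S_N$ with $\mathbb E|f|\le 1$, the martingale $(g_n) = (\mathbb E_n |f|)$ is bounded, and I set
\[
	T := \inf\{ n\ge 0 : \mathbb E_{n+1}|f| > \lambda \text{ on some atom, i.e. } (\mathbb E_{n+1}|f|)(x) > \lambda \},
\]
more precisely $T(x) = \inf\{n : (\mathbb E_n|f|)(x) > \lambda\}$, which is a stopping time with respect to $(\mathscr F_n)$ because $\{T\le n\}\in\mathscr F_n$. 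On $\{T=\infty\}$ we have $\mathbb E_n|f|\le\lambda$ for all $n$, hence $|f|\le\lambda$ a.e.\ there since $f\in S_N$ and $\mathbb E_N|f|\ge c\|f\|$ locally by \eqref{eq:L1Linfty}; actually more carefully one argues on the finest atoms. The stopped function $g_T$ (in the notation preceding Proposition~\ref{prop:stopping}) will be the ``good'' part modulo corrections. By the weak-type $(1,1)$ bound for the martingale maximal function (Doob), $\mathbb P(T<\infty) = \mathbb P(\sup_n \mathbb E_n|f| > \lambda)\lesssim \lambda^{-1}$.

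Next I would define the three pieces in terms of the differences $df_m = R_m f$. Recall each index $m$ lies in exactly one block $I_n$ (Remark~\ref{rem:Pm}), so ``level of $m$'' $= n$ is well-defined; write $n(m)$ for it. Set
\[
	da_m := \charfun_{\{T < n(m)\}} df_m, \qquad db_m := \charfun_{\{T = n(m)-1\}} (df_m - d\tilde f_m) \ \text{-type term}, \qquad dc_m := \charfun_{\{T\ge n(m)\}}(\text{stopped piece}),
\]
but the cleanest organization is: let $c := g_T - P_\infty 0 = g_T$ adjusted, i.e.\ take $c$ to be the function whose $m$-th difference is $\charfun_{\{T\ge n(m)\}} df_m$ when $n(m)-1 < T$, combined using Proposition~\ref{prop:stopping}(2) which gives $\charfun_{\{T\ge n\}} P_{n,j} f = \charfun_{\{T\ge n\}} P_{n,j} g_T$. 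Concretely I would set $c := $ the $L^1$-limit of $P_{n,j} g_T$ along the re-enumeration, $a := $ the part coming from indices with $T < n(m)$, i.e.\ $a = f - g_T$ essentially, and $b$ absorbs the boundary block where $T = n(m)-1$ and the projection $P_{n,j}$ of the stopped vs.\ unstopped function differ (this is where $\dim S_n/S_{n-1} > 1$ forces a genuine correction term $b$, unlike the one-dimensional martingale case where no $b$ is needed). The estimate $\|c\|_\infty\lesssim\lambda$ follows from Theorem~\ref{thm:Pnbound_2}: $|P_{n,j} g_T|\lesssim \mathbb E_{n-1}|g_T| + \mathbb E_n|g_T|$, and on $\{T\ge n\}$ we have $|g_T| = |\mathbb E_{n-1}|$-controlled so $\mathbb E_{n-1}|g_T|, \mathbb E_n|g_T|$ are $\lesssim\lambda$ by the stopping time definition, while off $\{T\ge n\}$ the value is handled via Proposition~\ref{prop:stopping}(2). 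The bound $\|c\|_1\lesssim\|g_T\|_1\lesssim\mathbb E|f| \lesssim 1$ is exactly Proposition~\ref{prop:stopping}(1). For $a$: $\mathbb P(\sup_m |da_m|\ne 0)\le \mathbb P(T<\infty)\lesssim\lambda^{-1}$ by construction, and $\|a\|_1\lesssim \|f\|_1 + \|c\|_1 + \|b\|_1\lesssim 1$ using uniform $L^1$-boundedness of the partial sum projectors.

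For $b$, the term $db_m$ is supported on $\{T = n(m)-1\}$ and equals, on that set, the difference between $R_m$ applied to $f$ and to $g_T$; since both $R_m f$ and $R_m g_T$ have $L^1$-norm $\lesssim \mathbb E_{n-1}|f|\charfun_{A_n}$ localized to the single atom $A_n$ where the splitting happens, and on $\{T = n-1\}$ that atom satisfies $\mathbb E_{n-1}|f| > \lambda$ but $\mathbb E_{n-2}|f|\le\lambda$ (or it is a first-generation atom), a telescoping/summation over $n$ of these localized $L^1$-masses reproduces $\sum_n \int_{\{T=n-1\}}|f| = \int_{\{T<\infty\}}|f|\le \mathbb E|f|\lesssim 1$; crucially $db_m$ is nonzero for at most $\ell_n\le k$ values of $m$ in each block $I_n$, so $\big\|\sum_m|db_m|\big\|_1 \lesssim k\cdot\sum_n (\text{that localized mass}) \lesssim 1$.

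\textbf{Main obstacle.} The delicate point, and the reason $b$ must exist at all, is the boundary block $\{T = n(m)-1\}$: here $T$ being $\mathscr F_{n-1}$-measurable means $\{T\ge n\}\supsetneq\{T = n-1\}^c$ interacts nontrivially with the non-trivial quotient $S_n/S_{n-1}$, so one cannot simply split $df_m$ cleanly into a ``stopped'' and ``past'' part as in the scalar martingale case. I expect the careful bookkeeping --- verifying that on $\{T\ge n\}$ the identity $\charfun_{\{T\ge n\}}P_{n,j}f = \charfun_{\{T\ge n\}}P_{n,j}g_T$ (Proposition~\ref{prop:stopping}(2)) lets the ``$a$'' and ``$c$'' pieces be defined consistently across the re-enumeration, while the residue lands in $b$ and is controlled by the localized $L^1$-mass on the single split atom $A_n$ --- to be the step requiring the most care. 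Everything else is a routine combination of Theorem~\ref{thm:Pnbound_2}, Doob's inequality, and the uniform $L^1$-boundedness of $P_{n,j}$.
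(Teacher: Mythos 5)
Your overall strategy (stopping times, $a = f - g_T$ via Proposition~\ref{prop:stopping}, an orthogonalization correction $b$, and Theorem~\ref{thm:Pnbound_2} as the substitute for martingale estimates) is the right one, but the sketch has a genuine gap: you use only \emph{one} stopping time, and with a single stopping time the bound $\|c\|_\infty\lesssim\lambda$ cannot be obtained. The part of $c$ that does not come from the stopped sequence is the orthogonalization residue $\sum_m R_{m-1}\delta_m$, where $\delta_m=\charfun_{\{r=n\}}(\cdots)df_m$ is the jump at the stopping time; each individual term $R_{m-1}\delta_m$ is predictable and can be estimated via Theorem~\ref{thm:Pnbound_2} by $(\mathbb E_{n-1}+\mathbb E_n)(\charfun_{\{r=n\}}|df_m|)$, but the \emph{sum over $n$} of these terms is only controlled in $L^1$, not in $L^\infty$. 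The paper (following the classical Gundy argument) introduces a second, predictable stopping time $s=\inf\{n:\sum_{\ell\le n}\mathbb E_\ell v_{\ell+1}+\sum_{\ell\le n}v_\ell>\lambda\}$ built from $v_n=\sum_{m\in I_n}\mathbb E_n(\charfun_{\{r=n\}}|df_m|)$, and stops at $T=\min(r,s)$; it is precisely the definition of $s$ that truncates the sum $\sum_{n<s}\sum_{m\in I_n}(\mathbb E_{n-1}+\mathbb E_n)(\charfun_{\{r=n\}}|df_m|)$ at height $\lesssim\lambda$, with the single boundary block $n=s$ handled separately using $\{r>n\}$ on the complementary sub-atom. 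Your justification ``on $\{T\ge n\}$ we have $\mathbb E_{n-1}|g_T|,\mathbb E_n|g_T|\lesssim\lambda$'' is false as stated: $g_T$ contains the (unbounded) jump at time $T$, and only the once-more-stopped function $g_{(r-1)\wedge s}$ is pointwise $\lesssim\lambda$.

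Two further points. First, your remark that no $b$ is needed in the one-dimensional martingale case is a misconception: the term $b$ in Gundy's decomposition is not caused by $\dim S_n/S_{n-1}>1$ but by the unpredictable jump $df$ at the stopping time, which must be split into a mean-zero part (the $b$ part, with $\sum_m|db_m|$ in $L^1$) and its projection $R_{m-1}\delta_m$ (which joins $c$ and needs the second stopping time as above). Second, the $L^1$ bound $\|\sum_m|db_m|\|_1\lesssim1$ is not a ``routine telescoping'': since $\{r=n\}$ may be a proper sub-atom $A_n'$ or $A_n''$ of the split atom $A_n$, the pointwise bound $|f_m|\lesssim\mathbb E_{n-1}|f|+\mathbb E_n|f|$ brings in mass of $f$ from the complementary sub-atom (which lies in $\{r>n\}$), and one must use the defining inequality of $r$ to show $\int_{\{r=n\}}|g_n|\gtrsim\lambda|\{r=n\}|$ before the localized masses become summable to $\mathbb E|f|$. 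This is the content of Cases 1 and 2 in the paper's proof and is the step your telescoping argument silently assumes.
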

\begin{proof}
Let $\lambda>0$ and let $f\in S_N$ with $\mathbb E|f|\leq 1$.
We recall the notation $g_n = P_n f$ and the definition of  the index set
$I_n = \{ m : S_{n-1} \subsetneq V_m \subseteq S_n\}$.
Let $c_3$ be the implicit constant from Theorem~\ref{thm:Pnbound_2}.
Define 
\[
	r = \inf \{ n \geq 0: \mathbb E_n|g_n| > \lambda/(4c_3) \text{ or }
	\max_{m\in I_n} \mathbb E_n | f_{m}| > \lambda\}.
\]
Moreover, define $v_{-1}=0$ and 
\[
	v_n = \sum_{m\in I_n}\charfun_{\{r=n\}} \mathbb E_n( |df_m|) = 
	\sum_{m\in I_n} \mathbb E_n( \charfun_{\{r=n\}} |df_m|),\qquad n\geq 0.
\]
Let 
\[
	s = \inf\{ n\geq -1: \sum_{\ell=-1}^n \mathbb E_\ell v_{\ell+1} +
	\sum_{\ell=0}^n v_\ell > \lambda\}.
\]		
Observe that $r,s$ are both defined to be the smallest value of $n$ such that 
certain inequalities for $\mathscr F_n$-measurable functions are satisfied. Therefore, $r$ and $s$
are both stopping times with respect to the filtration $(\mathscr F_n)$. Consequently
$T:= \min(r,s)$ is also a stopping time with respect to $(\mathscr F_n)$.

\textsc{Part I: }Define
\[
	a := f - g_T.
\]
Then, Proposition~\ref{prop:stopping}  and the assumption $\|f\|_1\leq 1$ immediately imply $\|a\|_1 \lesssim 1$.
If $T(x)=\infty$ we have $da_m(x)=0$ for all $m$ by item (2) of
Proposition~\ref{prop:stopping} and thus
\[
	\{ \sup_m |da_m|\neq 0 \} \subset \{T<\infty\} = \{r<\infty\} \cup
	\{s<\infty\}.
\]
Therefore, by Theorem~\ref{thm:Pnbound_2} and Doob's maximal inequality,
\begin{equation}\label{eq:r_2}
	\mathbb P(r<\infty) \leq  \mathbb P( \sup_{n\geq 0} \max_{m\in I_n} \mathbb
	E_n |f_{m}| > \lambda/(4c_3))
	\leq \mathbb P( 8 c_3^2 \sup_{n\geq 0} \mathbb E_n |f| >\lambda)  \leq
	8c_3^2 \frac{\mathbb E|f|}{\lambda}\lesssim \lambda^{-1}.
\end{equation}
Now we show
that $\mathbb P(s<\infty) \lesssim \lambda^{-1}$.
We begin by using Chebyshev's inequality: 
\begin{equation}\label{eq:start_s}
	\mathbb P(s<\infty) = \mathbb P \Big( \sum_{n=-1}^\infty (\mathbb E_n
	v_{n+1} + v_n) > \lambda \Big) \leq \lambda^{-1} \sum_{n=-1}^\infty \mathbb
	E(\mathbb E_n(v_{n+1})+ v_n) \leq 2\lambda^{-1} \sum_{n=0}^\infty \mathbb Ev_n.
\end{equation}
In order to estimate the integral of $v_n=\sum_{m\in I_n} \mathbb E_n(
|df_m|)\charfun_{\{r=n\}}$, we fix an index $m\in I_n$ and write 
\begin{equation}\label{eq:double1}
	\mathbb E ( |df_m|\charfun_{\{r=n\}} ) \leq \mathbb E( |f_m |\charfun_{\{r=n\}}) + 
	\mathbb E( |f_{m-1}|\charfun_{\{r=n\}}).
\end{equation}

Since for $m\in I_n$, the support of $df_m$ is contained
in $A_n$ by Lemma~\ref{lem:orth}, we know that $\{r = n\} \subset A_n$. If we assume that $\{r=n\}$ is
not empty, this implies by the $\mathscr
F_n$-measurability of the set $\{r=n\}$ that it can
either be $\la{A_n}$, $\sm{A_n}$ or $A_n$ if $n\geq 1$ and $\{ r= 0\}$ can only
be $\Omega$ if it is nonempty.

In order to continue the estimate in \eqref{eq:double1}, 
we are going to show that $\mathbb E(\charfun_{\{r=n\}} |f_m|)\lesssim
\mathbb E(\charfun_{\{r=n\}} |g_n|)$ for $m\in I_n$ or $m$ such that $f_m  =
g_{n-1}$.
To this end, we distinguish two possibilities for the sets $\{r=n\}$:

		\textsc{Case 1: } $\{r=n\} = A_n$ with the atom $A_n$ of $\mathscr F_{n-1}$:
		Since $f_m = P_{n,j} g_n$ for some $j$ and the
		operator $P_{n,j}$ is uniformly bounded on $L^1$,
	\begin{equation}\label{eq:r1}
			\mathbb E_{n-1}( |f_{m}| \charfun_{\{r=n\}} ) =
			\charfun_{\{r=n\}} \mathbb E_{n-1} |f_{ m}| \lesssim
			\charfun_{\{r=n\}} \mathbb E_{n-1} |g_n| = \mathbb E_{n-1}(
			|g_n|\charfun_{\{r=n\}}).
	\end{equation}
	\textsc{Case 2: } Let $A:= \{r=n\} \in \{ \sm{A_n}, \la{A_n} \}$, which means that $n\geq 1$
	and $B := (A_n\setminus A) \subset \{r>n\}$.
		By
		Theorem~\ref{thm:Pnbound_2},
		\begin{equation}\label{eq:ess}
		\begin{aligned}
			\int_{A} |f_{m}| &\leq c_3 \int_{A} \big( \mathbb
			E_{n-1} |g_n| + \mathbb E_n |g_n|\big) =
			c_3 \bigg(\frac{|A|}{|A_n|} \int_{A_n} |g_n| +
			\int_{A} |g_n|\bigg) \\
			& = c_3\bigg(\frac{|A|}{|A|+ |B|} \int_{B} |g_n| + \Big( 1 +
			\frac{|A|}{|A_n|}\Big) \int_{A} |g_n|\bigg) \\
			&\leq c_3\bigg(\frac{|A|}{|A|+ |B|} \int_{B} |g_n| + 
			2 \int_{A} |g_n|\bigg) \leq  
			\frac{\lambda}{4}|A|+ 2c_3 \int_{A} |g_n|,
		\end{aligned}
	\end{equation}
		where in the last inequality, we used the inequality 
		$\int_B |g_n| \leq \lambda|B|/(4c_3)$, which is implied by $B
		\subset \{ r>n\}$.
		Since $A = \{r=n\}$, we know that either $\mathbb E_n |g_n|
		> \lambda/ (4c_3)$ on $A$ or there exists $m_1\in I_n$ so that
		$\mathbb E_n |f_{m_1}| > \lambda$ on $A$. 
		In both cases, using \eqref{eq:ess} for $m=m_1$ in the second case, we
		obtain the inequality
		\begin{equation}\label{eq:clear}
			\int_A |g_n| \gtrsim \lambda|A|.
		\end{equation}
		Using \eqref{eq:clear} we continue the calculation in \eqref{eq:ess} to deduce
		\[
			\int_{A} |f_{m}| \lesssim \int_{A} |g_n|,
		\]
		which implies 
		\begin{equation}\label{eq:r2}
			\mathbb E (\charfun_{\{r=n\}}|f_m| ) =  \mathbb E
			(\charfun_{A} |f_{m}|) \lesssim
			\mathbb E (\charfun_{A} |g_n|) = \mathbb E
			(\charfun_{\{r=n\}} |g_n|).
		\end{equation}

Combining cases 1 and 2, we continue the calculation in \eqref{eq:double1} to
deduce
\[
	\mathbb E( |df_m| \charfun_{ \{r=n\} } ) \lesssim \mathbb E(
	|g_n|\charfun_{\{r=n\}})\lesssim \mathbb E(|f|\charfun_{\{r=n\}}),
\]
where the last inequality follows from $P_n f = g_n$ and the fact that the
operators $P_n$ are uniformly bounded on $L^1$.
Therefore, the definition of $v_n$ implies
\begin{equation}\label{eq:vn}
	\sum_n \mathbb E v_n \lesssim \mathbb E|f|\leq 1.
\end{equation}
Inserting this inequality in \eqref{eq:start_s}, we finally obtain
\[
	\mathbb P(s<\infty) \lesssim \lambda^{-1}.
\]

\textsc{Part II:}
 Observe that 
\[
	g_{T\wedge n} - g_{T\wedge (n-1)} =  \charfun_{\{T\geq n\}}
	(g_n - g_{n-1}) = \charfun_{\{r\geq n\}} \charfun_{\{s\geq n\}}
	(g_n - g_{n-1}).
\]
This gives 
\[
	g_T = \sum_{n\geq 0} (g_{T\wedge n} - g_{T\wedge(n-1)}) = 
	\sum_{n\geq 0}\sum_{m\in I_n} \charfun_{\{r\geq n\}}\charfun_{\{s\geq n\}} df_m.
\]
Now, let $n\geq 0$ and $m\in I_n$ and define
\begin{align*}
	\gamma_m &= \charfun_{\{r > n\}} \charfun_{\{s\geq n\}} df_m,
	\\
	\delta_m &= \charfun_{\{r = n\}} \charfun_{\{s\geq n\}} df_m.
\end{align*}
Recall that by $R_m$ we denote the orthoprojector onto the space $V_m$ so that $df_m =
R_m f - R_{m-1}f$. Then,
observe that since $\{r\geq n\}$ and $\{s\geq n\}$ are $\mathscr{F}_{n-1}$-measurable,
we have $R_{m-1} (\gamma_m + \delta_m)= \charfun_{\{r\geq n\}}
\charfun_{\{s\geq n\}} R_{m-1}(df_m) = 0$ by Remark~\ref{rem:Pm}. This means that $\gamma_m +
\delta_m$ is a difference sequence with respect to the operators $(R_m)$. Thus,
the difference sequences 
\begin{equation*}
 db_m = \delta_m - R_{m-1} \delta_m,\qquad dc_m = \gamma_m + R_{m-1}\delta_m
\end{equation*}
define the functions $b$ and $c$.
Fix the parameter $n$ and consider
\begin{align*}
	\mathbb E \Big[\sum_{m\in I_n} |\delta_m|\Big] &= \mathbb E
\Big[	\sum_{m\in I_n} \charfun_{\{r=n\}} \charfun_{\{s\geq n\}} |df_m| \Big] \\
&\leq \mathbb E \Big[\sum_{m\in I_n} \charfun_{\{r=n\}}
|df_m| \Big]  =\mathbb E v_n.
\end{align*}
Therefore, by estimate \eqref{eq:vn}
\[
	\sum_m \mathbb E|\delta_m| \lesssim \sum_{n} \mathbb E v_n \lesssim 1.
\]
Since the operators $R_m$ are uniformly bounded on $L^1$, this implies $\mathbb E \sum_m |db_m|\lesssim 1$ and also, 
\[
	\| c\|_1 = \Big\| \sum_m dc_m \Big\|_1 \leq \|g_T\|_1 +  \Big\| \sum_m |db_m|\Big\|_1 \lesssim 1.	
\]
It remains to estimate the $L^\infty$-norm of $c$. We have
\[
	\sum_m \gamma_m = g_{ (r-1)\wedge s}.
\]
Let $x\in \{r=n\}$ and for $\ell < n$, let $B$ be the atom of $\mathscr F_\ell$  that
contains $x$.  Since $g_{\ell}\in S_\ell$,
\[
	\|g_{\ell}\|_{B} \lesssim (\mathbb E_{\ell} |g_{\ell}|)(x).
\]
By the minimality of $n$ in the definition of the stopping time $r$, this implies
\[
	\|g_\ell\|_{B} \lesssim \lambda.
\]
This argument yields the inequality
\[
	\Big\| \sum_m \gamma_m \Big\|_{\infty} =\| g_{(r-1)\wedge s} \|_\infty \lesssim \lambda.
\]

Now we estimate the second part of $c$. Since $\{s\geq n\}$ is $\mathscr
F_{n-1}$-measurable, by Remark~\ref{rem:Pm},
\begin{align*}
	\Big|\sum_m R_{m-1}\delta_m\Big| &= \Big| \sum_{n\geq 0} \sum_{m\in I_n}
	R_{m-1}\big(\charfun_{\{s\geq n\}} \charfun_{\{r=n\}} (df_m)\big) \Big| \\
	&= \Big| \sum_{n\geq 0} \charfun_{\{s\geq n\}} \sum_{m\in I_n}
	R_{m-1}\big(\charfun_{\{r=n\}} df_m\big) \Big| \\
	& = \Big| \sum_{n=0}^s \sum_{m\in I_n}
	R_{m-1}\big(\charfun_{\{r=n\}} df_m\big) \Big| \\
	& \leq \Big| \sum_{n=0}^{s-1} \sum_{m\in I_n}
	R_{m-1}\big(\charfun_{\{r=n\}} df_m\big) \Big|  
	+ \Big| \sum_{n= s} \sum_{m\in I_n}
	R_{m-1}\big(\charfun_{\{r=n\}} df_m\big) \Big| \\
	&\leq c_3 \sum_{n=0}^{s-1} \sum_{m\in I_n} (\mathbb E_{n-1} + \mathbb
	E_n)(\charfun_{\{r=n\}} | df_m |) + \sum_{n=s} \sum_{m\in I_n}
	\big|R_{m-1}\big( \charfun_{\{r=n\}} df_m\big)\big|,
\end{align*}
where the last inequality follows from Theorem~\ref{thm:Pnbound_2}. Looking at the
definition of the stopping time $s$, we see that the first term is bounded by
$2c_3 \lambda$. 
For the second term we distinguish the cases where $\{r=n\}=A_n$  is an atom of $\mathscr F_{n-1}$
or $\{r=n\}$ is only an atom of $\mathscr F_{n}$. Let $m\in I_n$.
In the first case, we have by Remark~\ref{rem:Pm} that
\[
	R_{m-1} ( \charfun_{\{r=n\}} df_m) = \charfun_{\{r=n\}} R_{m-1}(df_m)=0.
\]
In the second case, we assume that $A:=\{r=n\} \in \{
	\sm{A_n},\la{A_n}\}$, which yields
that $B:=(A_n\setminus A) \subset \{r>n\}$. Since
\[
	0 = R_{m-1} (\charfun_{A_n} df_m) = R_{m-1}(\charfun_{A} df_m) +
	R_{m-1}(\charfun_{B} df_m),
\]
we get 
\[
	|R_{m-1}( \charfun_{\{r=n\}} df_m)| = |R_{m-1}(\charfun_{B} df_m)|.
\]
The fact that $r>n$ on $B$ now yields
\[
	\| \charfun_{B} df_m \|_\infty \leq \| \charfun_{B} f_m\|_\infty +
	\|\charfun_{B}
	f_{m-1}\|_\infty \lesssim \lambda
\]
by definition of the stopping time $r$ and inequality \eqref{eq:L1Linfty}.
Since the operator $R_{m-1}$ is uniformly bounded
on $L^\infty$, we obtain $\| R_{m-1}(\charfun_{B} df_m) \|_\infty \lesssim
\lambda$. Inserting those estimates into the above, we get that 
\[
	\Big\| \sum_m R_{m-1}\delta_m \Big\|_\infty \lesssim \lambda.
\]
This finishes the proof of the theorem.
\end{proof}

\section{Unconditionality of the differences $df_m$}\label{sec:uncond}
Once we know this Gundy-like decomposition, almost the same proof as in the
martingale case yields the following result. We include it for
completeness.
\begin{thm}\label{thm:uncond}
	Let $f\in L^1$ and let
	$\tilde{f}_m = \sum_{j\leq m} \varepsilon_j df_j$ for some
	sequence of signs $(\varepsilon_j)$. Then
	\begin{equation}\label{eq:wt}
		\sup_{\lambda>0} \lambda \mathbb P(\sup_m |\tilde{f}_m|
		>\lambda) \lesssim \mathbb E|f|.
	\end{equation}
\end{thm}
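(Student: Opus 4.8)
The plan is to mimic the classical weak-type $(1,1)$ proof of martingale transform boundedness, using the Gundy-like decomposition from Theorem~\ref{thm:gundy_like} as the substitute for Gundy's decomposition. First I would reduce to the case $f\in S_N$ with $\mathbb E|f|\leq 1$: by scaling we may assume $\mathbb E|f|\leq 1$, and since $\cup_n S_n$ is dense in $L^1$ (or by a limiting argument using that the operators $R_m$ are uniformly $L^1$-bounded) it suffices to treat $f\in S_N$, so that only finitely many $df_m$ are nonzero and $\tilde f_m$ stabilizes. Fix $\lambda>0$ and apply Theorem~\ref{thm:gundy_like} at level $\lambda$ to obtain $f=a+b+c$ with the three listed properties. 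Correspondingly split $\tilde f_m = \tilde a_m + \tilde b_m + \tilde c_m$ where $\tilde a_m=\sum_{j\leq m}\varepsilon_j\,da_j$, and similarly for $b,c$; then
\[
\mathbb P\big(\sup_m|\tilde f_m|>\lambda\big)\leq \mathbb P\big(\sup_m|\tilde a_m|>\lambda/3\big)+\mathbb P\big(\sup_m|\tilde b_m|>\lambda/3\big)+\mathbb P\big(\sup_m|\tilde c_m|>\lambda/3\big),
\]
and I would bound each term by $\lesssim\lambda^{-1}$.

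For the $a$-term, $\sup_m|\tilde a_m|\neq 0$ only on the set $\{\sup_m|da_m|\neq0\}$, which by item (1) has measure $\lesssim\lambda^{-1}$; hence $\mathbb P(\sup_m|\tilde a_m|>\lambda/3)\lesssim\lambda^{-1}$ regardless of the signs. For the $b$-term, I would use the pointwise bound $\sup_m|\tilde b_m|\leq\sum_m|db_m|$ together with item (2), $\big\|\sum_m|db_m|\big\|_1\lesssim1$, and Chebyshev's inequality to get $\mathbb P(\sup_m|\tilde b_m|>\lambda/3)\lesssim\lambda^{-1}$. For the $c$-term, which is the only place the signs and the projection structure really enter, I would use an $L^2$ estimate: since $c\in L^1\cap L^\infty$ with $\|c\|_1\lesssim1$ and $\|c\|_\infty\lesssim\lambda$, we have $\|c\|_2^2\leq\|c\|_\infty\|c\|_1\lesssim\lambda$. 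The differences $dc_m=R_mc-R_{m-1}c$ are pairwise orthogonal in $L^2$ (as successive orthoprojections), so $\|\sum_{j\leq m}\varepsilon_j\,dc_j\|_2^2=\sum_{j\leq m}\|dc_j\|_2^2\leq\|c\|_2^2\lesssim\lambda$; taking the supremum over $m$ together with the fact that $\sup_m|\tilde c_m| = |\tilde c_{m_0}|$ for the maximal index $m_0$ (or invoking the uniform $L^2$-boundedness of the partial-sum operators $R_m$, which follows from Theorem~\ref{thm:Pnbound_2}), Chebyshev in $L^2$ gives $\mathbb P(\sup_m|\tilde c_m|>\lambda/3)\lesssim\lambda^{-2}\cdot\lambda=\lambda^{-1}$.

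Adding the three contributions yields $\lambda\,\mathbb P(\sup_m|\tilde f_m|>\lambda)\lesssim1=\mathbb E|f|$ under the normalization, and then homogeneity in $f$ gives \eqref{eq:wt} in general. The main obstacle — though it has essentially been resolved already by the work in Section~\ref{sec:decomp} — is ensuring that all the ingredients used for martingales (the pointwise comparison of the partial-sum operators to the conditional expectations, hence their uniform $L^2$-boundedness and the weak-type bound for the maximal function, and the orthogonality of the $dc_m$) carry over; these are supplied by Theorem~\ref{thm:Pnbound_2} and the orthoprojection property of $R_m$, so the remaining argument is just the standard three-term split. One should take a little care that the supremum over $m$ in the weak-type statement for $\tilde c$ is handled correctly, e.g.\ by applying Doob/Chebyshev to the $L^2$-bounded sequence of partial sums or by exploiting that only finitely many terms are nonzero.
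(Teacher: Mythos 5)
Your proposal is correct and follows essentially the same route as the paper: reduce to $f\in S_N$, apply the Gundy-like decomposition of Theorem~\ref{thm:gundy_like}, handle $a$ via the support estimate, $b$ via $\sup_m|\tilde b_m|\leq\sum_m|db_m|$ and Chebyshev, and $c$ via orthogonality of the $dc_j$, the bound $\|c\|_2^2\leq\|c\|_\infty\|c\|_1\lesssim\lambda$, and control of the maximal function through Theorem~\ref{thm:Pnbound_2} and Doob's inequality. (Only the parenthetical appeal to density of $\cup_n S_n$ in $L^1$ should be dropped in favor of your alternative limiting argument, since that density requires extra hypotheses; the paper uses the limiting argument.)
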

\begin{proof}
It suffices to prove the result for functions in $\cup_n S_n$.
Indeed, assuming the inequality for functions in $\cup_n S_n$ and $f\in L^1$, we 
obtain 
\[
\mathbb P( \sup_m |\tilde{f}_m|>\lambda) = \lim_n \mathbb P (\sup_{m\leq n} |\tilde{f}_m|>\lambda) 	
\lesssim \liminf_n \frac{\|f_n\|_1}{\lambda} \lesssim \frac{\|f\|_1}{\lambda},
\]
where we used \eqref{eq:wt} for functions in $S_n$ and the uniform boundedness of 
the operators $(P_n)$ on $L^1$, respectively.

	Therefore, we now prove the stated inequality 
	for functions $f\in\cup_n S_n$  using Theorem~\ref{thm:gundy_like}. Assume
	w.l.o.g. that $\mathbb E|f|\leq 1$. Then we decompose the function $f$ by
	using Theorem~\ref{thm:gundy_like} into $f=a+b+c$. Then, we see that
	$\tilde{f}_m = \tilde{a}_m + \tilde{b}_m + \tilde{c}_m$ and we have
	\[
		\mathbb P( \sup_m |\tilde{f}_m| >3\lambda) \leq 
		\mathbb P( \sup_m |\tilde{a}_m| >\lambda) + \mathbb P( \sup_m
		|\tilde{b}_m| >\lambda) + \mathbb P( \sup_m |\tilde{c}_m| >\lambda)
	\]
	and we estimate each of those three parts separately.
	\begin{enumerate}
		\item If $\sup_m |\tilde{a}_m| >\lambda$ this implies in
			particular that $\sup_m |da_m|\neq 0$ and therefore
			\[
				\mathbb P(\sup_m |\tilde{a}_m| > \lambda) \leq
				\mathbb P(\sup_m |da_m|\neq 0) \lesssim
				\lambda^{-1}
			\]
			by item (1) of Theorem~\ref{thm:gundy_like}.
		\item For the second term, we estimate
			\[
				\sup_m |\tilde{b}_m| \leq \sum_m |db_m|,
			\]
			which by Chebyshev's inequality  implies 
			\[
				\mathbb P(\sup_m |\tilde{b}_m| > \lambda) \leq
				\lambda^{-1} \mathbb E \sum_m |db_m| \lesssim
				\lambda^{-1},
			\]
			where the latter inequality is a consequence of item (2)
			in Theorem~\ref{thm:gundy_like}.
		\item First we note that $\tilde{c}_m = R_m \tilde{c}_N$.
			Therefore, by Theorem~\ref{thm:Pnbound_2} and Doob's
			inequality,
			\[
				\| \sup_m |\tilde{c}_m| \|_2 \leq 2c_3 \|
				\tilde{c}_N\|_2.
			\]
			Since the terms $dc_j$ are orthogonal to each other for
			different indices $j$, we obtain
			\[
				\| \tilde{c}_N \|_2 = \|c_N\|_2,
			\]
			which by Chebyshev's inequality yields
			\[
				\mathbb P(\sup_m |\tilde{c}_m| >\lambda)\leq
				\lambda^{-2} \mathbb E \sup_m |\tilde{c}_m|^2
				\lesssim \lambda^{-2}
				\mathbb E |c_N|^2 \leq \lambda^{-2}
				\|c_N\|_\infty \mathbb E|c_N|.
			\]
			Using now item (3) of Theorem~\ref{thm:gundy_like} yields
			\[
				\mathbb P(\sup_m |\tilde{c}_m| > \lambda)
				\lesssim \lambda^{-1}.
			\]
	\end{enumerate}
	Combining those steps yields the assertion of the theorem.
\end{proof}

Now it is a simple task to prove the unconditionality 
in $L^p$, $1<p<\infty$ for the differences $(df_m)$.
\begin{proof}[Proof of Theorem~\ref{thm:uncond_intro}]
Just use Theorem~\ref{thm:uncond} and 
the Marcinkiewicz interpolation theorem.
\end{proof}

\section{Democracy of the differences $df_m$}\label{sec:democracy}
We denote by $\psi_{n,j}$ the unique (up to sign) function contained in
$V_{n,j}$ that is orthonormal to $V_{n,j-1}$.
Recall the pointwise estimate from Lemma~\ref{lem:ess_estimate} which reads
\begin{equation}\label{eq:est_psi_2}
		|\psi_{n,j}| \lesssim \begin{cases}
			0,& \text{on } A_n^c, \\
			|\sm{A_n}|^{-1/2},& \text{on } \sm{A_n}, \\
			|\sm{A_n}|^{1/2}/|\la{A_n}|,& \text{on } \la{A_n}.
		\end{cases}
	\end{equation}
	We use this estimate to perform the following calculation:
	\begin{equation}\label{eq:Aprime0}
	\begin{aligned}
		1 = \|\psi_{n,j}\|_2^2 &= \int_{\sm{A_n}}
		|\psi_{n,j}|^2\dif\mathbb P + \int_{\la{A_n}} |\psi_{n,j}|^2\dif\mathbb P
		\leq |\sm{A_n}| \|\psi_{n,j}\|_{\sm{A_n}}^2 +  |\la{A_n}|
		\|\psi_{n,j}\|_{\la{A_n}}^2 \\
		&\lesssim   |\sm{A_n}| \|\psi_{n,j}\|_{\sm{A_n}}^2 +
		\frac{|\sm{A_n}|}{|\la{A_n}|}, 
	\end{aligned}
\end{equation}
which yields that there exists a constant $d>0$ so that if $|\sm{A_n}| /
|\la{A_n}| \leq d$, we get the inequality
\begin{equation}\label{eq:lower_estimate}
	\| \psi_{n,j} \|_{\sm{A_n}} \simeq |\sm{A_n}|^{-1/2}.
\end{equation}

For $1<p<\infty$, Theorem~\ref{thm:uncond_intro} implies that 
\begin{equation}\label{eq:uncondLp}
	\big\| \sum_{n,j} a_{n,j}\psi_{n,j} \big\|_p \simeq \Big\| \big( \sum_{n,j}
	|a_{n,j}\psi_{n,j}|^2 \big)^{1/2} \Big\|_p
\end{equation}
with constants that also depend on $p$.

Next, we want to check that we can also use some variant of the square function.
Before we formulate this result we recall Stein's martingale inequality \cite{Stein1970a}, which
reads for $1<p<\infty$ as
\begin{equation}\label{eq:steins_inequality}
	\Big\| \big(\sum_n |\mathbb E_{n} h_n|^2\big)^{1/2} \Big\|_p \lesssim 
	\Big\| \big(\sum_n |h_n|^2\big)^{1/2} \Big\|_p,
\end{equation}
where $h_n$ are arbitrary functions and the implied constant only depends
on $p$. 
\begin{lem}\label{lem:diff_square}
	For all $1<p<\infty$, we have
	\[
		\Big\| \big( \sum_{n,j} |a_{n,j}\psi_{n,j}|^2 \big)^{1/2} \Big\|_p
		\simeq
		\Big\| \big( \sum_{n,j} |a_{n,j}|^2
		\frac{\charfun_{\sm{A_n}}}{|\sm{A_n}|} \big)^{1/2} \Big\|_p,
	\]
	where the implied constants depend on $c_1,c_2\in(0,1]$ given by \eqref{eq:L1Linfty} and on $k$ and $p$.
\end{lem}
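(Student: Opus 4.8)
The plan is to prove the two inequalities of the statement separately; in both directions the mechanism is the same: dominate the relevant quantities \emph{pointwise} by squares of conditional expectations of multiples of $\charfun_{\sm{A_n}}|\sm{A_n}|^{-1/2}$, and then invoke Stein's inequality~\eqref{eq:steins_inequality}. One bookkeeping point recurs throughout: since each level $n$ carries at most $\ell_n\le k$ indices $j$, I would split the index set $\{(n,j)\}$ into at most $k$ subfamilies, each containing at most one pair per level, apply the scalar inequality~\eqref{eq:steins_inequality} to each, and sum; this costs only a factor depending on $k$, and for the subfamilies the estimate~\eqref{eq:steins_inequality} with $\mathbb E_{n-1}$ in place of $\mathbb E_n$ is the same inequality after an index shift.

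For ``$\lesssim$'', I would square~\eqref{eq:est_psi_2} to get $|\psi_{n,j}|^2 \lesssim \charfun_{\sm{A_n}}/|\sm{A_n}| + |\sm{A_n}|\,\charfun_{\la{A_n}}/|\la{A_n}|^2$. The first summand already has the required form. For the second, since $\sm{A_n},\la{A_n}$ are the two atoms into which the $\mathscr F_{n-1}$-atom $A_n$ is split and $|\la{A_n}|\ge |A_n|/2$,
\[
	\frac{|\sm{A_n}|}{|\la{A_n}|^2}\charfun_{\la{A_n}} \le \frac{4|\sm{A_n}|}{|A_n|^2}\charfun_{A_n} = 4\big(\mathbb E_{n-1}(\charfun_{\sm{A_n}}|\sm{A_n}|^{-1/2})\big)^2 .
\]
Multiplying by $|a_{n,j}|^2$, summing over $(n,j)$, passing to square roots and to $\|\cdot\|_p$, and applying~\eqref{eq:steins_inequality} to the contribution of the second term (with underlying functions $a_{n,j}\charfun_{\sm{A_n}}|\sm{A_n}|^{-1/2}$) then bounds the left-hand side by a constant times $\big\|\big(\sum_{n,j}|a_{n,j}|^2\charfun_{\sm{A_n}}/|\sm{A_n}|\big)^{1/2}\big\|_p$.

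For ``$\gtrsim$'', the heart of the matter is the pointwise lower bound $\charfun_{\sm{A_n}}/|\sm{A_n}| \lesssim (\mathbb E_{n-1}|\psi_{n,j}|)^2 + (\mathbb E_{n}|\psi_{n,j}|)^2$ for every $(n,j)$. By~\eqref{eq:Aprime0}, at least one of two alternatives holds. If $|\sm{A_n}|\,\|\psi_{n,j}\|_{\sm{A_n}}^2 \gtrsim 1$, then~\eqref{eq:L1Linfty} applied to $\psi_{n,j}\in S$ on the atom $\sm{A_n}$ gives $\int_{\sm{A_n}}|\psi_{n,j}| \gtrsim |\sm{A_n}|\,\|\psi_{n,j}\|_{\sm{A_n}} \gtrsim |\sm{A_n}|^{1/2}$, hence $\mathbb E_n|\psi_{n,j}| \gtrsim |\sm{A_n}|^{-1/2}$ on $\sm{A_n}$, and $(\mathbb E_n|\psi_{n,j}|)^2$ dominates $\charfun_{\sm{A_n}}/|\sm{A_n}|$ (it is nonnegative, and $\charfun_{\sm{A_n}}$ vanishes off $\sm{A_n}$). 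Otherwise~\eqref{eq:Aprime0} forces $|\sm{A_n}|\simeq|\la{A_n}|\simeq|A_n|$; then, since $\|\psi_{n,j}\|_{A_n}\lesssim |\sm{A_n}|^{-1/2}$ by Lemma~\ref{lem:ess_estimate}, the crude bound $1 = \int_{A_n}|\psi_{n,j}|^2 \le \|\psi_{n,j}\|_{A_n}\int_{A_n}|\psi_{n,j}|$ yields $\mathbb E_{n-1}|\psi_{n,j}| \gtrsim |\sm{A_n}|^{1/2}/|A_n| \simeq |\sm{A_n}|^{-1/2}$ on $A_n \supseteq \sm{A_n}$, so $(\mathbb E_{n-1}|\psi_{n,j}|)^2$ dominates $\charfun_{\sm{A_n}}/|\sm{A_n}|$. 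Multiplying by $|a_{n,j}|^2$, summing, passing to square roots and $\|\cdot\|_p$, and applying~\eqref{eq:steins_inequality} twice (once to the $\mathbb E_{n-1}$-parts, once to the $\mathbb E_n$-parts) completes the lower bound.

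The hard part is this dichotomy in the ``$\gtrsim$'' direction: the estimate~\eqref{eq:lower_estimate} — equivalently the first alternative of~\eqref{eq:Aprime0} — is exactly what can fail when $A_n$ is split into two comparable atoms, and the content of the argument is that in that ``balanced'' regime the crude inequality $\int_{A_n}|\psi_{n,j}|^2 \le \|\psi_{n,j}\|_{A_n}\int_{A_n}|\psi_{n,j}|$ combined with the $L^\infty$-estimate for $\psi_{n,j}$ from Lemma~\ref{lem:ess_estimate}, read through the coarser expectation $\mathbb E_{n-1}$ rather than $\mathbb E_n$, still recovers the correct lower bound.
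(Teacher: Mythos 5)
Your proof is correct and follows essentially the same route as the paper's: pointwise domination by (squares of) conditional expectations of $\charfun_{\sm{A_n}}|\sm{A_n}|^{-1/2}$ or of $|\psi_{n,j}|$, followed by Stein's inequality \eqref{eq:steins_inequality}, with the same dichotomy driven by \eqref{eq:Aprime0}. The only cosmetic difference is in the balanced case of the lower bound, where you use $1=\|\psi_{n,j}\|_2^2\le\|\psi_{n,j}\|_{A_n}\int_{A_n}|\psi_{n,j}|\dif\mathbb P$ together with the sup-norm bounds of Lemma~\ref{lem:ess_estimate}, while the paper reads off from \eqref{eq:Aprime0} that one of the two local sup-norms is $\gtrsim|\sm{A_n}|^{-1/2}$ and then applies \eqref{eq:L1Linfty}; both yield the same pointwise bound $\charfun_{\sm{A_n}}|\sm{A_n}|^{-1/2}\lesssim\mathbb E_{n-1}|\psi_{n,j}|+\mathbb E_n|\psi_{n,j}|$.
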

\begin{proof}
	Note that estimate \eqref{eq:est_psi_2} implies also that 
	\[
		|\psi_{n,j}|\lesssim \mathbb E_{n-1}
		\Big(\frac{\charfun_{\sm{A_n}}}{|\sm{A_n}|^{1/2}}\Big)\qquad
		\text{on } \la{A_n}.
	\]
	Therefore, by Stein's inequality \eqref{eq:steins_inequality}
	\begin{equation}\label{eq:lo}
		\Big\| \big(\sum_{n,j} |a_{n,j}\psi_{n,j}|^2\big)^{1/2} \Big\|_p \lesssim 
		\Big\| \big( \sum_{n,j} |a_{n,j}|^2
		\frac{\charfun_{\sm{A_n}}}{|\sm{A_n}|} \big)^{1/2} \Big\|_p.
	\end{equation}

	To get the converse inequality, first fix $n$ and consider the two cases

	\textsc{Case 1: } $|\sm{A_n}| / |\la{A_n}| \leq d$. Here we estimate for
	all $j$, using \eqref{eq:lower_estimate} and \eqref{eq:L1Linfty} 
	\[
		\frac{\charfun_{\sm{A_n}}}{ |\sm{A_n}|^{1/2} } \lesssim \mathbb
		E_n|\psi_{n,j}|.
	\]

	\textsc{Case 2: } $|\sm{A_n}|/|\la{A_n}| > d$. Here, by
	\eqref{eq:Aprime0}, we have
	\[
		\|\psi_{n,j}\|_{\sm{A_n}} \gtrsim
		|\sm{A_n}|^{-1/2}\qquad\text{or}\qquad \|\psi_{n,j}\|_{\la{A_n}}
		\gtrsim |\la{A_n}|^{-1/2} \geq d^{-1/2} |\sm{A_n}|^{-1/2}.
	\]
	Using again \eqref{eq:L1Linfty}, this implies
	\[
		\frac{\charfun_{\sm{A_n}}}{|\sm{A_n}|^{1/2}}\lesssim \mathbb
		E_{n-1}|\psi_{n,j}|.
	\]

	Combining those two cases, we arrive at the inequality
	\[
		\frac{\charfun_{\sm{A_n}}}{|\sm{A_n}|^{1/2}}\lesssim \mathbb
		E_{n-1} |\psi_{n,j}| + \mathbb E_n |\psi_{n,j}|.
	\]
	Using Stein's inequality \eqref{eq:steins_inequality}, we conclude
	\[
		\Big\| \big( \sum_{n,j} |a_{n,j}|^2
		\frac{\charfun_{\sm{A_n}}}{|\sm{A_n}|} \big)^{1/2} \Big\|_p
		\lesssim \Big\| \big(\sum_{n,j} |a_{n,j}\psi_{n,j}|^2\big)^{1/2}
		\Big\|_p, 
	\]
	which, together with \eqref{eq:lo} completes the proof of the lemma.
\end{proof}
For $1<p<\infty$,  let $\psi_{n,j}^p = \psi_{n,j} / \|\psi_{n,j}\|_p$ be the
renormalized version of the functions $\psi_{n,j}$. Note that 
using the above lemma, we obtain
\begin{equation}\label{eq:pnorm}
	\|\psi_{n,j}\|_{p}\simeq |\sm{A_n}|^{1/p - 1/2}.
\end{equation}
Then, we prove the following theorem.
\begin{thm}\label{thm:temlyakov-property}
	For $1<p<\infty$, the functions $(\psi_{n,j}^p)$ are democratic in $L^p$, i.e., we have for all
	finite sets $\Lambda$ of indices $(n,j)$ the
	equivalence
	\[
		\big\|\sum_{(n,j)\in \Lambda} \psi_{n,j}^p \big\|_p \simeq
		(\card\Lambda)^{1/p},
	\]
	where $\card\Lambda$ denotes the cardinality of $\Lambda$
	and 
	the implied constants depend on $c_1,c_2\in(0,1]$ given by \eqref{eq:L1Linfty} and on $k$ and $p$.
\end{thm}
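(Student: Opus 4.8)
The strategy is to reduce the democracy estimate to a computation with the ``model'' square function provided by Lemma~\ref{lem:diff_square}, and then to evaluate that model square function directly. Applying \eqref{eq:uncondLp} with $a_{n,j}=1/\|\psi_{n,j}\|_p$ for $(n,j)\in\Lambda$ and $a_{n,j}=0$ otherwise, then Lemma~\ref{lem:diff_square}, and finally \eqref{eq:pnorm}, we obtain
\[
	\Big\| \sum_{(n,j)\in\Lambda} \psi_{n,j}^p \Big\|_p
	\simeq \Big\| \Big( \sum_{(n,j)\in\Lambda} \frac{1}{\|\psi_{n,j}\|_p^2}\,
	\frac{\charfun_{\sm{A_n}}}{|\sm{A_n}|} \Big)^{1/2} \Big\|_p
	\simeq \Big\| \Big( \sum_{(n,j)\in\Lambda} |\sm{A_n}|^{-2/p}\,\charfun_{\sm{A_n}} \Big)^{1/2} \Big\|_p.
\]
Thus it suffices to show that the right-hand side is $\simeq(\card\Lambda)^{1/p}$. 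Raising to the $p$-th power, I must estimate
\[
	J:=\int_\Omega \Big( \sum_{(n,j)\in\Lambda} |\sm{A_n}|^{-2/p}\,\charfun_{\sm{A_n}}(\omega) \Big)^{p/2} \dif\mathbb P(\omega)
\]
and prove $J\simeq \card\Lambda$.

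\textbf{Key steps.} First I would organize $\Lambda$ by the underlying atom: group the indices $(n,j)\in\Lambda$ according to the value of $n$, noting that for fixed $n$ the number of admissible $j$ is at most $\ell_n\le k$, so the multiplicity of each set $\sm{A_n}$ in the sum is bounded by $k$. This lets me replace the sum over $\Lambda$ by a sum over a multiset of atoms $\sm{A_n}$ with bounded multiplicity, at the cost of constants depending on $k$. For the \emph{lower} bound, I would use that the sets $\{\sm{A_n}\}$ arising from distinct atoms $A_n$ of the binary filtration are nested or disjoint; restricting the integrand to the region where only the ``smallest'' relevant atom contributes gives, for each $(n,j)\in\Lambda$, a contribution $\gtrsim |\sm{A_n}|^{-1}\cdot|\sm{A_n}|=1$ to $J$ after accounting for overlaps, yielding $J\gtrsim\card\Lambda$; alternatively, since $p/2$ may be less than $1$, I would use the elementary inequality $(\sum x_i)^{p/2}\ge \sum x_i^{p/2}$ when $p\le 2$ and H\"older/convexity when $p\ge 2$, combined with $\int |\sm{A_n}|^{-1}\charfun_{\sm{A_n}}=1$. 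For the \emph{upper} bound, the essential point is that the function $\Phi:=\sum_{(n,j)\in\Lambda}|\sm{A_n}|^{-2/p}\charfun_{\sm{A_n}}$ has $\int\Phi^{p/2}\lesssim\card\Lambda$; here I would exploit the tree structure of $\mathscr A$ to control how many of the sets $\sm{A_n}$ can overlap at a given point and with which measures, reducing to a geometric-series estimate along each chain of nested atoms.

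\textbf{Main obstacle.} The delicate point is the upper bound for $J$ when $p>2$, since there the function $t\mapsto t^{p/2}$ is convex and overlaps between the sets $\sm{A_n}$ are amplified rather than damped. The sets $\sm{A_n}$ for indices in $\Lambda$ need not be disjoint: along a chain of nested atoms one may have $\sm{A_{n_1}}\supset \sm{A_{n_2}}\supset\cdots$ with geometrically (or much more slowly) decreasing measures, and the contributions $|\sm{A_{n_i}}|^{-2/p}$ then grow. The key structural fact that saves the estimate is that passing from $A_n$ to one of its children at least \emph{preserves} measure up to the constraint $|\la{A_n}|\ge|\sm{A_n}|$, but crucially $|\sm{A_n}|$ can be arbitrarily small relative to $|A_n|$; nonetheless, whenever $\sm{A_{n'}}\subseteq\sm{A_n}$ for two indices in the chain, one has $\sm{A_{n'}}\subseteq A_{n'}\subseteq\sm{A_n}$, and since the atoms $A_{n'}$ along the chain strictly decrease, the measures $|\sm{A_{n'}}|$ decrease; I would quantify this to show $\sum_{i}|\sm{A_{n_i}}|^{-2/p}\charfun_{\sm{A_{n_i}}}(\omega)\lesssim |\sm{A_{n_{i_0}}}|^{-2/p}$ at a point $\omega$ lying in the smallest such set, by a geometric-type bound, and then integrate. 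I expect this chain analysis — essentially a stopping-time or Carleson-type packing argument over the binary tree — to be the technical heart of the argument, whereas the reduction via \eqref{eq:uncondLp}, Lemma~\ref{lem:diff_square} and \eqref{eq:pnorm} is routine.
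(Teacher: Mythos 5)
Your proposal is correct and follows essentially the same route as the paper: the identical reduction via \eqref{eq:uncondLp}, Lemma~\ref{lem:diff_square} and \eqref{eq:pnorm} to the integral of $\big(\sum_{(n,j)\in\Lambda}|\sm{A_n}|^{-2/p}\charfun_{\sm{A_n}}\big)^{p/2}$, followed by the pointwise comparison of this sum with its maximal term using the nested-or-disjoint structure of the sets $\sm{A_n}$ and the bounded multiplicity in $j$. The chain analysis you flag as the technical heart is in fact a one-line geometric series, since $\sm{A_m}\subsetneq\sm{A_n}$ forces $A_m\subseteq\sm{A_n}$ and hence $|\sm{A_m}|\le|A_m|/2\le|\sm{A_n}|/2$ (note, though, that your ``alternative'' inequality $(\sum x_i)^{p/2}\ge\sum x_i^{p/2}$ holds for $p\ge 2$, not $p\le 2$; the max-term argument, which works for all $p$, is the one to use).
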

\begin{proof}
	First, we remark that Lemma~\ref{lem:diff_square} and \eqref{eq:pnorm}
	imply
	\[
		\Big\| \big( \sum_{n,j} |a_{n,j}\psi_{n,j}^p|^2 \big)^{1/2} \Big\|_p
		\simeq
		\Big\| \big( \sum_{n,j} |a_{n,j}|^2
		\frac{\charfun_{\sm{A_n}}}{|\sm{A_n}|^{2/p}} \big)^{1/2}
		\Big\|_p.
	\]	
	Therefore, using also~\eqref{eq:uncondLp}, we obtain
	\begin{equation}\label{eq:psi}
		\big\|\sum_{(n,j)\in \Lambda} \psi_{n,j}^p \big\|_p^p \simeq 
		\Big\| \big( \sum_{(n,j)\in\Lambda} 
		\frac{\charfun_{\sm{A_n}}}{|\sm{A_n}|^{2/p}} \big)^{1/2}
		\Big\|_p^p = \int_{\Omega} \Big(\sum_{(n,j)\in\Lambda} 
		\frac{\charfun_{\sm{A_n}}}{|\sm{A_n}|^{2/p}}
		\Big)^{p/2}\dif\mathbb P.
	\end{equation}
	Note that for different indices $n,m$ we have either $\sm{A_n} \cap
	\sm{A_m} =\emptyset$ or one is strictly included in the other. Moreover,
	if $\sm{A_m}\subsetneq \sm{A_n}$ we have geometric decay of measures
	$|\sm{A_m}| \leq |\sm{A_n}|/2$. Therefore, by a geometric series argument
	and the remark that for each $n$, there are at most $k$ numbers $j$ so
	that $(n,j)\in\Lambda$, we get pointwise
	\[
		\Big(\sum_{(n,j)\in\Lambda} \frac{\charfun_{\sm{A_n}}}{| \sm{A_n}|^{2/p}} \Big)^{p/2}	
		\simeq \Big( \max_{(n,j)\in\Lambda} \frac{\charfun_{\sm{A_n}}}{|
			\sm{A_n}|^{2/p}} \Big)^{p/2} = 
		 \max_{(n,j)\in\Lambda} \frac{\charfun_{\sm{A_n}}}{| \sm{A_n}|}
		 \simeq \sum_{(n,j)\in\Lambda} \frac{\charfun_{\sm{A_n}}}{|
			 \sm{A_n}|}.
	\]
	Inserting this in \eqref{eq:psi}, we obtain
	\[
		\big\|\sum_{(n,j)\in \Lambda} \psi_{n,j}^p \big\|_p^p \simeq
		\int_{\Omega}  \sum_{(n,j)\in\Lambda}
		\frac{\charfun_{\sm{A_n}}}{| \sm{A_n}|} \dif\mathbb P =
		\card\Lambda,
	\]
	which concludes the proof.
\end{proof}

If we use this theorem, together with Theorem~\ref{thm:uncond_intro} and
Theorem~\ref{thm:dense_suff} we get
\begin{cor}
For $1<p<\infty$, the functions $(\psi_{n,j}^p)$ are a greedy basis in the $L^p$-closure 
of $\cup_n S_n$.

If additionally $\big| \bigcap_{f\in S} \{ f = 0\} \big| = 0$ and $\cup_n\mathscr F_n$ generates 
$\mathscr F$, the functions $(\psi_{n,j}^p)$ form a greedy basis in $L^p$.
\end{cor}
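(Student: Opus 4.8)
The plan is to invoke the abstract greedy-basis characterization of Konyagin and Temlyakov: a basis in a Banach space is greedy if and only if it is unconditional and democratic. The two hypotheses of that criterion are precisely what Theorems~\ref{thm:uncond_intro} and~\ref{thm:temlyakov-property} supply, so the corollary is essentially a matter of assembling the pieces in the right order and checking that the system $(\psi_{n,j}^p)$ really is a (Schauder) basis of the relevant space.

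First I would recall that, as noted right after Example~\ref{ex:three}, Theorem~\ref{thm:Pnbound_2} gives uniform $L^p$-boundedness of the partial-sum operators $P_{n,j}$ (equivalently $R_m$) for $1\le p\le\infty$, with constant depending only on $c_1,c_2$ and $k$; since $P_{n,j} g = g$ for $g\in\cup_n S_n$ and $P_{n,j} g \to P_\infty g$ in $L^p$-norm for every $g\in U_p$ (the proposition on $L^p$-limits), the functions $(\psi_{n,j})$, and hence their $L^p$-normalizations $(\psi_{n,j}^p)$, form a Schauder basis of $U_p=\overline{\operatorname{span}}_{L^p}(\cup_n S_n)$. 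Next, Theorem~\ref{thm:uncond_intro} together with the Marcinkiewicz interpolation step already carried out says the basis is unconditional in $L^p$, and Theorem~\ref{thm:temlyakov-property} says it is democratic. Therefore, by \cite{KonyaginTemlyakov1999} (see also \cite{Temlyakov2011}), $(\psi_{n,j}^p)$ is a greedy basis of $U_p$, which is the first assertion.

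For the second assertion I would simply note that under the two extra hypotheses---$\big|\bigcap_{f\in S}\{f=0\}\big|=0$ and $\cup_n\mathscr F_n$ generates $\mathscr F$---Theorem~\ref{thm:dense_suff} (using that condition (c) of its remark makes condition (2) automatic) gives that $\cup_n S_n$ is dense in $L^p$, so $U_p=L^p$. Plugging this into the first assertion yields that $(\psi_{n,j}^p)$ is a greedy basis of $L^p$ itself.

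I do not anticipate a genuine obstacle here: the corollary is a bookkeeping statement. The only point requiring a little care is making explicit that $(\psi_{n,j}^p)$ is a Schauder basis of $U_p$ (not merely a complete unconditional system)---this is exactly where the uniform boundedness of $P_{n,j}$ and the norm-convergence $P_{n,j}g\to P_\infty g$ are used---after which the Konyagin--Temlyakov equivalence applies verbatim.
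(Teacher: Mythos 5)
Your proposal is correct and follows exactly the paper's route: unconditionality from Theorem~\ref{thm:uncond_intro}, democracy from Theorem~\ref{thm:temlyakov-property}, the Konyagin--Temlyakov characterization, and Theorem~\ref{thm:dense_suff} for the identification $U_p=L^p$ in the second assertion. The Schauder-basis point you flag is also already recorded in the paper (Section~\ref{sec:pw}, where $(\psi_{n,j})$ is noted to be a basis of $U_p$ with partial sums $P_{n,j}$), so there is nothing missing.
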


\section{Infinite measure spaces}\label{sec:infty}
Let $(\Omega,\mathscr G,\mathfrak m)$ be a measure space with $\mathfrak m(\Omega) = \infty$.
Let $(\mathscr G_n)_{n\in\mathbb Z}$ be a filtration on $\Omega$ such that 
for each $n\in\mathbb Z$, the $\sigma$-algebra $\mathscr G_n$ is generated by 
$\mathscr G_{n-1}$ and the subdivision of \emph{each} atom $A$ of $\mathscr G_{n-1}$
into two atoms $A', A''$ of $\mathscr G_n$ having positive and finite measure.

Let $\mathscr A_n$ be the collection of all atoms of $\mathscr G_n$ and set 
$\mathscr A := \cup_{n\in\mathbb Z} \mathscr A_n$. 
Let $S$ be a finite-dimensional space of $\mathscr G$-measurable functions on $\Omega$
such that there exist constants $c_1,c_2\in (0,1]$ so that
for each $f\in S$ and each $A\in\mathscr A$ we have 
	\begin{equation*}
			|\{ \omega\in A : |f(\omega)| \geq c_1\|f\|_{A}
		\}| \geq c_2|A|.
	\end{equation*}

For each atom $A\in\mathscr A$, consider the spaces $V_{A,m}$ satisfying 
\[
S(A) = V_{A,0} \subseteq \cdots \subseteq V_{A,{\ell_A}}	= S(A') \oplus S(A'')
\]
and $\dim V_{A,m} / V_{A,m-1} = 1$ for each $m = 1,\ldots,\ell_A$. Moreover, consider 
the functions $\psi_{A,m}\in V_{A,m}$ satisfying $\psi_{A,m} \perp V_{A,m-1}$ and $\|\psi_{A,m}\|_2 = 1$.

We now show that for each $1<p<\infty$, the renormalized system 
$\{ \psi_{A,m}^p : A\in \mathscr A, 1\leq m\leq \ell_A\}$, given by $\psi_{A,m}^p = 
\psi_{A,m}/\|\psi_{A,m}\|_p$, is unconditional and democratic in $L^p$.
We achieve this by using our results for probability spaces locally on atoms.

To this end, consider finitely many pairs $\Lambda$ of indices $(A,m)$ with 
$A\in\mathscr A$ and $1\leq m\leq \ell_A$. Choose $n_0\in \mathbb Z$ sufficiently small 
such that for each $(A,m)\in \Lambda$, there exists an atom $B\in \mathscr A_{n_0}$ such that 
$A\subseteq B$.
Let 
\[
\Gamma = \{ B\in\mathscr A_{n_0}\;|\; \text{there exists } (A,m)\in\Lambda \text{ with } A\subseteq B \}.
\]
For each $B\in\Gamma$, we consider the probability space $(B,\mathscr G\cap B, \mathfrak m(\cdot) / \mathfrak m(B))$
and a binary filtration $(\mathscr F_n^B)_{n\geq 0}$ on this probability space 
that contains the $\sigma$-algebras $(\mathscr G_n \cap B)_{n\geq n_0}$.
Moreover, we choose the spaces $(V_{n,j})$
(as given in Section~\ref{sec:intro})
corresponding to $(\mathscr F_n^B)_{n\geq 0}$ such that 
\begin{equation}\label{eq:spaces}
\{ V_{n,j} : n\geq 0, 0\leq j\leq \ell_n\} = \{ V_{A,m} : A\in\mathscr A, A\subseteq B, 0\leq m\leq \ell_A\}.
\end{equation}
Define $(\psi_{n,j})$ corresponding to the spaces $(V_{n,j})$ as in the beginning of Section~\ref{sec:democracy}
and consider, for $1<p<\infty$, the renormalized version $\psi_{n,j}^p = \psi_{n,j}/\|\psi_{n,j}\|_p$.
Because of equality \eqref{eq:spaces}, the corresponding functions $\psi_{n,j}$ and $\psi_{A,m}$
also coincide and we deduce from the results in Sections~\ref{sec:uncond} and \ref{sec:democracy}
that for each fixed $B\in\Gamma$, the system 
\[
	\{ \psi_{A,m}^p : A\in\mathscr A, A\subset B, 1\leq m\leq \ell_A\}
\]
is unconditional and democratic in $L^p$ with the same constants for each $B\in\Gamma$.
Let $ f = \sum_{(A,m)\in\Lambda} a_{A,m} \psi_{A,m}^p$ for some coefficients $(a_{A,m})$.
Then we estimate
\begin{align*}
\Big\| \sum_{(A,m)\in\Lambda} \pm a_{A,m} \psi_{A,m}^p \Big\|_p^p &= 
\Big\| \sum_{B\in\Gamma} \sum_{(A,m)\in\Lambda : A\subseteq B} \pm a_{A,m} \psi_{A,m}^p \Big\|_p^p  \\
&= 
 \sum_{B\in\Gamma}\Big\|  \sum_{(A,m)\in\Lambda : A\subseteq B} \pm a_{A,m} \psi_{A,m}^p \Big\|_p^p ,
\end{align*}
which is true since the sets $B\in\Gamma$ are disjoint and
 the support of $\psi_{A,m}^p$ is a subset of $A$.
 By the unconditionality of $\{\psi_{A,m}^p : A\subset B,1\leq m\leq \ell_A\}$ for 
 each $B\in \Gamma$, we obtain 
 \begin{align*}
 \sum_{B\in\Gamma}\Big\|  \sum_{(A,m)\in\Lambda : A\subseteq B} \pm a_{A,m} \psi_{A,m}^p \Big\|_p^p 
 \simeq 
 \sum_{B\in\Gamma}\|  f\charfun_B \|_p^p.
 \end{align*}
 The latter term is now equal to $ \|f\|_p^p$,
 which shows the unconditionality of $(\psi_{A,m}^p)$ in $L^p$.
 Similarly, by the uniform democracy of  $\{\psi_{A,m}^p : A\subset B,1\leq m\leq \ell_A\}$ for 
 each $B\in \Gamma$,
\begin{align*}
\Big\| \sum_{(A,m)\in\Lambda} \psi_{A,m}^p \Big\|_p^p &= 
\Big\| \sum_{B\in\Gamma} \sum_{(A,m)\in\Lambda : A\subseteq B}  \psi_{A,m}^p \Big\|_p^p  
=\sum_{B\in\Gamma}\Big\|  \sum_{(A,m)\in\Lambda : A\subseteq B} \psi_{A,m}^p \Big\|_p^p \\
&\simeq \sum_{B\in\Gamma} \card \{ (A,m)\in \Lambda : A\subseteq B\} = \card \Lambda,
\end{align*}
which also shows the democracy of $(\psi_{A,m}^p)$ in $L^p$.

\section{Non-binary filtrations}
\label{sec:more_than_two}

In this last section, we expand on the setting when the filtration 
$(\mathscr F_n)$ on the probability space $(\Omega,\mathscr F,\mathbb P)$
 is not necessarily binary, meaning that an 
atom $A_n$ of $\mathscr F_{n-1}$ is divided into $r = r(n)\geq 2$ atoms 
$A_{n,1}, \ldots, A_{n,r}$ of $\mathscr F_n$.
We assume that for all $n$, the ordering of those atoms is such that $|A_{n,1}| \leq \cdots \leq |A_{n,r}|$
and that $r(n)$ is bounded independently of $n$.

Define the intermediate $\sigma$-algebras $\mathscr F_{n-1} = \mathscr F_{n,1}  \subseteq 
\cdots \subseteq  \mathscr F_{n,r} = \mathscr F_n$ by the condition that for $\mu = 1,\ldots,r$ 
and restricted to $A_{n}$, the $\sigma$-algebra $\mathscr F_{n,\mu}$ consists of the atoms 
\[
	A_{n,1},\ldots, A_{n,\mu-1}, A_{n,\mu}\cup \cdots\cup A_{n,r}.
\]
Create a binary filtration $(\mathscr F_n')$ that consists precisely of the $\sigma$-algebras 
$\mathscr F_{n,\mu}$ with $n$ being a positive integer and $1\leq \mu\leq r(n)$.
Denote by $\mathscr A, \mathscr A'$ the collection of atoms of the filtrations $(\mathscr F_n)$
and $(\mathscr F_n')$ respectively.

We now show that if a space $S$ satisfies \eqref{eq:L1Linfty} for the filtration $(\mathscr F_n)$,
it also satisfies \eqref{eq:L1Linfty} for the binary filtration $(\mathscr F_n')$.
It is essential here that the refinement of $\mathscr F_{n-1}$ to $\mathscr F_n$ is done such that we 
take away the smallest remaining atom in each step.
\begin{prop}
		Let $S\subseteq L^\infty$ be a finite-dimensional vector space so that 
		there exist constants $c_1,c_2\in (0,1]$ such that  
		for each atom $A\in \mathscr A$ we have \eqref{eq:L1Linfty}, i.e.,
		\begin{equation}\label{eq:L1Linftynew}
				|\{ \omega\in A : |f(\omega)| \geq c_1\|f\|_{A}
			\}| \geq c_2|A|,\qquad f\in S.
		\end{equation}

		Then, there exists a constant $c_2'\in (0,1]$,
		depending only on $c_2$ and $\sup_n r(n)$,
		such that for all $A\in\mathscr A'$
		we have 
		\begin{equation}\label{eq:L1Linfty_otherparameters}
				|\{ \omega\in A : |f(\omega)| \geq c_1\|f\|_{A}
			\}| \geq c_2'|A|,\qquad f\in S.
		\end{equation}
\end{prop}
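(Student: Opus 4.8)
The plan is to reduce \eqref{eq:L1Linfty_otherparameters} to the hypothesis \eqref{eq:L1Linftynew} after identifying the atoms of $(\mathscr F_n')$ that are genuinely new. Every atom of $\mathscr F_n'$ is an atom of one of the intermediate $\sigma$-algebras $\mathscr F_{n,\mu}$, and such an atom either already belongs to $\mathscr A$ or equals one of the ``tails'' $T_{n,\mu} := A_{n,\mu}\cup\cdots\cup A_{n,r(n)}$ with $2\le\mu\le r(n)$. Since \eqref{eq:L1Linfty_otherparameters} holds on every atom in $\mathscr A$ by \eqref{eq:L1Linftynew} as soon as $c_2'\le c_2$, it remains to treat $A=T_{n,\mu}$. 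So I would fix such an $A$, let $A_n$ denote its parent atom in $\mathscr F_{n-1}$, write $r=r(n)$, assume $\|f\|_A>0$, and abbreviate $N(f) := |\{\omega\in A:|f(\omega)|\ge c_1\|f\|_A\}|$.

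Next I would record two elementary lower bounds for $N(f)$. Since $A_n\in\mathscr A$ and $\|f\|_{A_n}\ge\|f\|_A$, the set $G := \{\omega\in A_n : |f(\omega)|\ge c_1\|f\|_{A_n}\}$ satisfies $|G|\ge c_2|A_n|$ and $G\subseteq\{\omega\in A_n : |f(\omega)|\ge c_1\|f\|_A\}$; intersecting with $A$ gives $N(f)\ge |G\cap A|\ge c_2|A_n| - |A_n\setminus A|$. On the other hand, $\|f\|_A = \max_{\mu\le j\le r}\|f\|_{A_{n,j}}$, so there is an index $j_0\ge\mu$ with $\|f\|_{A_{n,j_0}} = \|f\|_A$; applying \eqref{eq:L1Linftynew} to the atom $A_{n,j_0}\in\mathscr A$ and using $A_{n,j_0}\subseteq A$ yields $N(f)\ge c_2|A_{n,j_0}|\ge c_2|A_{n,\mu}|$.

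The conclusion then follows from a dichotomy on the size of the smallest surviving atom $A_{n,\mu}$ relative to $A_n$. If $|A_{n,\mu}|\ge\frac{c_2}{2(r-1)}|A_n|$, the second bound together with $|A|\le|A_n|$ gives $N(f)\ge\frac{c_2^2}{2(r-1)}|A|$. If instead $|A_{n,\mu}|<\frac{c_2}{2(r-1)}|A_n|$, the ordering $|A_{n,1}|\le\cdots\le|A_{n,r}|$ — the feature of the construction of $(\mathscr F_n')$ that discards the smallest remaining atom first — forces $|A_n\setminus A| = \sum_{j<\mu}|A_{n,j}|\le (\mu-1)|A_{n,\mu}|\le (r-1)|A_{n,\mu}|<\frac{c_2}{2}|A_n|$, and the first bound gives $N(f)\ge\frac{c_2}{2}|A_n|\ge\frac{c_2}{2}|A|$. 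Using $c_2\le 1\le r-1$, in both cases $N(f)\ge\frac{c_2^2}{2(r-1)}|A|$, so $c_2' := \frac{c_2^2}{2(\sup_n r(n)-1)}$ works, and it depends only on $c_2$ and $\sup_n r(n)$ as required.

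The step I expect to need the most care is exactly this dichotomy, and it is the only place where the structure of the filtration enters essentially: a priori the supremum $\|f\|_A$ could be attained on a tiny constituent $A_{n,j_0}$ of $A$ while the discarded atoms $A_{n,1},\dots,A_{n,\mu-1}$ are comparatively large, so that neither of the two elementary bounds is individually sufficient. What makes them complementary is precisely that peeling off atoms in order of increasing measure forces $|A_n\setminus A|$ to be small exactly in the regime where $|A_{n,\mu}|$, and hence $|A_{n,j_0}|$, is small; this is why the ordering hypothesis is used, and why a uniform $c_2'$ then also uses the bound $\sup_n r(n)<\infty$.
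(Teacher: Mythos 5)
Your proof is correct and rests on the same key mechanism as the paper's: a dichotomy on the size of the smallest surviving atom, combining a lower bound inherited from the parent atom $A_n$ (using $\|f\|_{A_n}\ge\|f\|_A$ and subtracting the measure of the discarded atoms) with a lower bound from the constituent atom $A_{n,j_0}$ on which $\|f\|_A$ is attained. The only difference is that you treat each tail $A_{n,\mu}\cup\cdots\cup A_{n,r}$ directly, whereas the paper peels off one atom at a time and iterates the argument with a degrading constant $d_{j+1}=d_j^2/(1+d_j)$; your direct version incidentally yields the cleaner and quantitatively better constant $c_2'=c_2^2/(2(\sup_n r(n)-1))$ in place of $d_{r-1}$.
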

\begin{proof}
Fix $n$ and assume \eqref{eq:L1Linftynew} for $A\in \{A_n,A_{n,1},\ldots,A_{n,r}\}$.
Let $d_1 = c_2$ and $d_2 = d_1^2/(1+d_1)=(1-\alpha) d_1$ with  $\alpha=(1+d_1)^{-1}<1$.
We show that the set  $A' := A_{n,2}\cup\cdots\cup A_{n,r}$ satisfies \eqref{eq:L1Linfty_otherparameters}
with the constant $c_2' = d_2$.
To this end, we distinguish two cases:

	\textsc{Case 1: } $|A_{n,1}|\leq  \frac{d_1}{1+d_1} | A_n| = \alpha d_1| A_n|$:
			here we estimate for $f\in S$
			\begin{align*}
				|\{ \omega\in A' :
				|f(\omega)|\geq c_1 
				\| f\|_{A' } \}| 
				 &\geq 
					|\{ \omega\in A' : |f(\omega)|\geq c_1 
				\| f\|_{ A_n } \}|  \\
				&\geq (1-\alpha)d_1  | A_n| \geq (1-\alpha)d_1|A'| = d_2|A'|,
			\end{align*}
			where we used \eqref{eq:L1Linftynew} for $A =  A_n$.

		\textsc{Case 2: } $|A_{n,1}| \geq \alpha d_1|A_n|$:
		let $\mu\in \{2,\ldots,r\}$ be such that $\| f\|_{A'} = \|f\|_{A_{n,\mu}}$. 
		Then we estimate
		\begin{align*}
			|\{ \omega\in A' :
			|f(\omega)|&\geq c_1 
				\| f\|_{A' } \}| 
				 \geq |\{ \omega\in A_{n,\mu} : |f(\omega)|\geq c_1\|f\|_{A_{n,\mu}} \}| 
				 \geq d_1|A_{n,\mu}|,
		\end{align*}
		where the last inequality follows from \eqref{eq:L1Linftynew} with $A=A_{n,\mu}$. Continue 
		with $d_1 |A_{n,\mu}| \geq d_1 |A_{n,1}| \geq \alpha d_1^2 | A_{n}| \geq  \alpha d_1^2 |A'| = d_2|A'|$.

Summarizing the two cases, we get inequality \eqref{eq:L1Linfty_otherparameters} for $A = A'$ with the 
parameter $c_2'=d_2$.
Defining the numbers $d_{j+1} = d_{j}^2/(1+d_j)$ recursively, we perform this argument inductively to 
obtain that all atoms $A'\subseteq A_n$ in $\mathscr A'$
satisfy inequality \eqref{eq:L1Linfty_otherparameters} with the parameter $c_2'=d_{r-1}\in (0,1]$.
\end{proof}
\begin{example}
	Consider the space $\Omega= [0,1]$ and $S = \lin \{ p\}$ with 
	$p(t) = t$ for $t\in [0,1]$. Divide $[0,1]$ into the partition of the 
	intervals $[0,\delta], (\delta, 1-\varepsilon), [1-\varepsilon,1]$.
	Then, Section~\ref{sec:examples} shows that \eqref{eq:L1Linfty} is satisfied 
	in this case for the sets $\{ [0,1], [0,\delta], (\delta, 1-\varepsilon), [1-\varepsilon,1]\}$
	with some uniform constants $c_1,c_2\in(0,1]$ in $\delta,\varepsilon$.

	On the other hand, it is also clear that for each choice of constants
	 $c_1',c_2'\in(0,1]$ there are parameters $\delta,\varepsilon>0$ such that the converse 
	 of \eqref{eq:L1Linfty} is satisfied for the set $K = [0,\delta]\cup [1-\varepsilon,1]$ and 
	 the function  $f= p_1$, i.e., we have 
	 \[
		| \{ t\in K : p_1(t) \geq c_1' \}| < c_2' | K | = c_2' (\delta + \varepsilon).
	 \]
	 Thus, if the splitting is not chosen in the way described above, for the intermediate 
	 atoms we  do not necessarily have \eqref{eq:L1Linfty} uniformly for all 
	 partitions.
\end{example}

Let $S\subseteq L^\infty$ be a finite-dimensional vector space that satisfies \eqref{eq:L1Linftynew}
for all $A\in\mathscr A$. The above proposition shows that \eqref{eq:L1Linfty_otherparameters}
is satisfied for all $A\in\mathscr A'$ and some different constant $c_2'\in (0,1]$.
Since $(\mathscr F_n')$ is a binary filtration, we get from our previous results that the 
corresponding orthonormal system  $(\psi_{n,j}')$ (defined in 
the beginning of Section~\ref{sec:democracy}) is unconditional in $L^p$ and 
its $p$-renormalization is democratic in $L^p$ for $1<p<\infty$.
Therefore, for $1<p<\infty$ and the orthonormal system $(\psi_{n,j}')$, 
we have the estimate
\begin{equation}\label{eq:uncond_plus_est}
\Big\| \sum_{n,j}\pm \langle f,\psi_{n,j}' \rangle \psi_{n,j}' \Big\|_p \leq C\|f\|_p,\qquad f\in L^p
\end{equation}
for some positive  constant $C$ independent of $f$ and independent of the choice of signs.
For each $n$ and $\mu = 1,\ldots,r-1$, let $(\psi_{n,\mu,\ell})_\ell$ consist of those finitely many 
functions $\psi_{n,j}'$ satisfying $\psi_{n,j}' \in S(\mathscr F_{n,\mu+1})$ and 
$\psi_{n,j}' \perp S(\mathscr F_{n,\mu})$.
Inequality \eqref{eq:est_psi_2} implies, for each $\mu=1,\ldots, r-1$ and each  $\ell$,
\begin{equation}\label{eq:est_psi_more_than_two}
		|\psi_{n,\mu,\ell}| \lesssim \begin{cases}
			0,& \text{on } (A_{n,\mu}\cup \cdots \cup A_{n,r})^c, \\
			|A_{n,\mu}|^{-1/2},& \text{on } A_{n,\mu}, \\
			|A_{n,\mu}|^{1/2}/|A_n|,& \text{on } A_{n,\mu+1}\cup \cdots \cup A_{n,r}.
		\end{cases}
	\end{equation}

On the other hand, we choose, for each $n$, an $L^2$-normalized function $\psi_{A_n}$ that 
is contained in $S(\mathscr F_n)$ and orthogonal to $S(\mathscr F_{n-1})$, meaning that 
\begin{equation}\label{eq:form_psi}
	\psi_{A_n} = \sum_{\mu=1}^{r-1} \sum_\ell  a_{n,\mu,\ell} \psi_{n,\mu,\ell}
\end{equation}
for some scalars $(a_{n,\mu,\ell})$ with $\sum_{\mu=1}^{r-1}\sum_\ell |a_{n,\mu,\ell}|^2=1$.
The number of terms in the inner sum over $\ell$ is bounded from above 
by the dimension of the space~$S$.

Having set up the notation, we now investigate the question whether the functions 
$(\psi_{A_n})_n$ can also be unconditional/democratic in $L^p$, $1<p<\infty$.
As we will see below, this is not possible in general and this fact extends 
Example~\ref{ex:three}, where we showed that certain pointwise inequalities 
for $\psi_{A_n}$ are not possible in general.

Since $(\psi_{n,j}')$ is unconditional in $L^p$ for $1<p<\infty$, Lemma~\ref{lem:diff_square}
implies
\begin{equation}
	\label{eq:psiequiv}
\begin{aligned}
	\|\psi_{A_n}\|_p &\simeq 	 \Big\| \Big( \sum_{\mu=1}^{r-1} 
	\sum_\ell |a_{n,\mu,\ell}|^2 \frac{\charfun_{A_{n,\mu}}}{|A_{n,\mu}|} \Big)^{1/2} \Big\|_p
	= \Big( \sum_{\mu=1}^{r-1} |A_{n,\mu}|^{1 - p/2} \big(\sum_\ell |a_{n,\mu,\ell}|^2 \big)^{p/2} \Big)^{1/p}\\
	&\simeq \sum_{\mu=1}^{r-1} |A_{n,\mu}|^{1/p - 1/2} \sum_\ell |a_{n,\mu,\ell}|.
\end{aligned}
\end{equation}
In the analysis of the functions $(\psi_{A_n})$, the latter expressions are important, so 
we make the following definition.
\begin{defin}
	Let $1\leq p\leq \infty$. We say that the system $(\psi_{A_n})$ satisfies condition 
	$(O_p)$, if there exists a positive  constant $C$ such that for all $n$ and all $\mu,\nu=1,\ldots,r-1$,
	\[
			\sum_{\ell} |a_{n,\mu,\ell}|	\sum_{m} |a_{n,\nu,m}| |A_{n,\mu}|^{1/p-1/2} 
			|A_{n,\nu}|^{1/2-1/p} \leq C.
	\]	
\end{defin}

We note that condition $(O_2)$ is satisfied for any choice of functions $(\psi_{A_n})$.
 Moreover, $(O_p)$ is satisfied if and only if $(O_{p'})$ is satisfied with 
$p' = p/(p-1)$. Additionally, if $(O_p)$ is satisfied for some $p\in[1,2)$,
then we also have $(O_q)$ for all $q \in (p,2]$.

Note that condition $(O_p)$ is satisfied in particular if  the measures of the atoms $A_{n,1},\ldots,A_{n,r-1}$
are comparable with a constant independent of $n$. Moreover, if $r=r(n)=2$ for all $n$,
condition $(O_p)$ is also satisfied, since the measure $|A_{n,r}|$ of the largest of 
the atoms $A_{n,1},\ldots,A_{n,r}$ does not appear in $(O_p)$.

\begin{thm}\label{thm:uncond_more_than_two}
Let $1<p<\infty$. Then, the following assertions are true.
\begin{enumerate}
	\item If $(\psi_{A_n})$ satisfies the estimate 
			\begin{equation}\label{eq:uncond_plus}
				\Big\| \sum_n \pm \langle f,\psi_{A_n}\rangle \psi_{A_n} \Big\|_p \leq C \|f\|_p,\qquad f\in L^p,
			\end{equation}
			for some positive  constant $C$ independent of $f$ and the choice of signs, then condition $(O_p)$ is satisfied.
	\item 
	If condition $(O_1)$ is satisfied, 
			then $(\psi_{A_n})$ satisfies estimate \eqref{eq:uncond_plus} for 
			some positive  constant $C$ independent of $f$ and the choice of signs.

	\item If condition $(O_p)$ is satisfied, then the renormalized system $(\psi_{A_n}^p)$ given by 
			$\psi_{A_n}^p = \psi_{A_n}/\|\psi_{A_n}\|_p$ is democratic in $L^p$.
\end{enumerate}
\end{thm}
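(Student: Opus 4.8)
The plan is to deduce all three assertions from the already-established unconditionality and democracy of the binary system $(\psi_{n,j}')$ together with the square-function equivalence \eqref{eq:psiequiv}. The basic idea is that $\psi_{A_n}$ is a fixed normalized linear combination of the $\psi_{n,\mu,\ell}'$ supported on $A_n$, so applying the binary unconditional estimate to the family $(\psi_{n,\mu,\ell}')$ lets us replace norms of sums of $\psi_{A_n}$ by the mixed expression
\[
	\Big\| \Big( \sum_n |c_n|^2 \sum_{\mu=1}^{r-1}\sum_\ell |a_{n,\mu,\ell}|^2 \frac{\charfun_{A_{n,\mu}}}{|A_{n,\mu}|} \Big)^{1/2} \Big\|_p.
\]
The point of condition $(O_p)$ is precisely that, on each atom $A_n$, the inner sum over $\mu$ (which lives on the nested, geometrically shrinking sets $A_{n,\mu}$) is comparable to a single term times a constant: $(O_p)$ says the ratios of the weights $|A_{n,\mu}|^{1/p-1/2}\sum_\ell|a_{n,\mu,\ell}|$ across $\mu$ are bounded, so up to constants the whole inner structure behaves like $\|\psi_{A_n}\|_p^2 |A_n|^{-2/p}\charfun_{A_n}$ after $p$-renormalization.

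For item (3), assuming $(O_p)$: I would start from the identity, valid by unconditionality of $(\psi_{n,j}')$ and Lemma~\ref{lem:diff_square}, that $\big\|\sum_{n\in\Lambda}\psi_{A_n}^p\big\|_p^p$ is comparable to the integral of the $p/2$-th power of $\sum_{n\in\Lambda}\|\psi_{A_n}^p\|_p^{-2}\sum_{\mu,\ell}|a_{n,\mu,\ell}|^2\charfun_{A_{n,\mu}}/|A_{n,\mu}|$; then use $(O_p)$ plus the geometric-series argument on the nested atoms $A_{n,\mu}\subseteq A_n$ to show the inner sum over $\mu$ is pointwise $\simeq \|\psi_{A_n}\|_p^{-2}|A_n|^{-2/p}\charfun_{A_n}$. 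This reduces the computation to exactly the one in the proof of Theorem~\ref{thm:temlyakov-property}: the resulting expression is $\sum_{n\in\Lambda}|A_n|^{-2/p}\charfun_{A_n}$ raised to the $p/2$ power, whose integral is $\simeq\card\Lambda$ because distinct $A_n$ are either disjoint or geometrically nested. For item (2), assuming $(O_1)$ (which by the self-improvement remark gives $(O_p)$ for all $1<p<2$, hence by duality for all $1<p<\infty$), I would run the same square-function comparison but keeping arbitrary coefficients $c_n=\langle f,\psi_{A_n}\rangle$: unconditionality of $(\psi_{n,j}')$ gives $\big\|\sum\pm c_n\psi_{A_n}\big\|_p\simeq$ the square function above, and $(O_p)$ lets the inner $\mu$-sum collapse to one term, after which one recognizes the right-hand side as the (sign-independent) square function for $f$ in the system $(\psi_{A_n})$, which is dominated by $\|f\|_p$ because $\sum_n|\langle f,\psi_{A_n}\rangle\psi_{A_n}|^2 \lesssim \sum_{n,\mu,\ell}|\langle f,\psi_{n,\mu,\ell}'\rangle\psi_{n,\mu,\ell}'|^2$ and the latter square function is equivalent to $\|f\|_p^p$ by the binary unconditionality.

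Item (1) is the converse and is where I expect the real work: one must show that if the unconditional estimate \eqref{eq:uncond_plus} holds then $(O_p)$ is forced. The strategy is to test \eqref{eq:uncond_plus} on functions $f$ adapted to a single atom $A_n$ — essentially $f=\psi_{A_n}$ or small modifications — and compare the norm of $\psi_{A_n}$ computed via \eqref{eq:psiequiv} with the norm of a sign-changed version. Concretely, choosing signs $\varepsilon$ so as to produce, on $A_n$, a function whose square function concentrates on a chosen $A_{n,\mu}$ versus one concentrating on a different $A_{n,\nu}$, the two-sided estimate \eqref{eq:uncond_plus} forces the corresponding weights $|A_{n,\mu}|^{1/p-1/2}\sum_\ell|a_{n,\mu,\ell}|$ to be mutually comparable, which is exactly $(O_p)$. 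Making this rigorous requires care because $\psi_{A_n}$ is a single fixed function and one cannot freely re-sign its internal components; the fix is to build a test function out of the $\psi_{n,\mu,\ell}'$ themselves in two atoms at the same level $n$ (or use the local-on-$B$ reduction as in Section~\ref{sec:infty}), apply \eqref{eq:uncond_plus}, and extract the bound on the ratio from the square-function equivalence \eqref{eq:psiequiv}. This localization-plus-testing step is the main obstacle; once it is in place, items (1)--(3) assemble quickly from the square-function machinery already developed.
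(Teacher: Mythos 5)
There are genuine gaps, and the two substantive ones sit exactly where you have underestimated the difficulty.

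\textbf{Item (2).} Your reduction rests on the claimed pointwise bound $\sum_n|\langle f,\psi_{A_n}\rangle\psi_{A_n}|^2\lesssim\sum_{n,\mu,\ell}|\langle f,\psi_{n,\mu,\ell}'\rangle\psi_{n,\mu,\ell}'|^2$. This is false in general. Expanding $\langle f,\psi_{A_n}\rangle\psi_{A_n}=\sum_{\mu,\nu,\ell,m}\overline{a_{n,\mu,\ell}}a_{n,\nu,m}\langle f,\psi_{n,\mu,\ell}'\rangle\psi_{n,\nu,m}'$ produces off-diagonal terms in which the coefficient $\langle f,\psi_{n,\mu,\ell}'\rangle$ (which ``lives'' on $A_{n,\mu}$) is attached to the profile $\charfun_{A_{n,\nu}}/|A_{n,\nu}|$ on the \emph{disjoint} set $A_{n,\nu}$; already for $r=3$ and $k=1$ one checks with \eqref{eq:est_psi_more_than_two} and \eqref{eq:lower_estimate} that these cross terms are not pointwise dominated by the diagonal square function, even under $(O_1)$. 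Controlling them is the entire content of item (2): one must bound $|a_{n,\mu,\ell}a_{n,\nu,m}|^2\charfun_{A_{n,\nu}}/|A_{n,\nu}|$ by a constant times $\big(\mathbb E_{\mathscr G}(\charfun_{A_{n,\mu}}/|A_{n,\mu}|^{1/2})\big)^2$ for the two-point $\sigma$-algebra $\mathscr G$ on $A_{n,\mu}\cup A_{n,\nu}$ --- this is where $(O_1)$ enters, via $|a_{n,\mu,\ell}a_{n,\nu,m}|\,(|A_{n,\mu}|+|A_{n,\nu}|)/(|A_{n,\mu}||A_{n,\nu}|)^{1/2}\leq 2C$ --- and then apply Stein's martingale inequality \eqref{eq:steins_inequality} to transport mass from $A_{n,\nu}$ back to $A_{n,\mu}$ inside the $L^p$ norm of the square function. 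Without this step your argument does not close.

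\textbf{Item (1).} You have this backwards: it is not where the real work lies, and the sign-testing/localization scheme you outline (and leave unexecuted) is unnecessary. From \eqref{eq:uncond_plus}, averaging over two sign choices that differ only in the $n$-th slot gives $|\langle f,\psi_{A_n}\rangle|\,\|\psi_{A_n}\|_p\leq C\|f\|_p$ for every $f$, i.e.\ $\|\psi_{A_n}\|_{p'}\|\psi_{A_n}\|_p\leq C$; substituting the two-sided formula \eqref{eq:psiequiv} for $\|\cdot\|_p$ and $\|\cdot\|_{p'}$ turns this product of sums into exactly the bound $(O_p)$ term by term. No re-signing of the internal components of $\psi_{A_n}$ is needed. \textbf{Item (3)} is closest to workable, but your pointwise collapse is misstated: the sets $A_{n,1},\dots,A_{n,r-1}$ within one $A_n$ are \emph{disjoint}, not nested, so the inner $\mu$-sum does not reduce pointwise to $\|\psi_{A_n}\|_p^2|A_n|^{-2/p}\charfun_{A_n}$; what $(O_p)$ gives (taking $\nu=\mu_0$ with $\sum_\ell|a_{n,\mu_0,\ell}|\simeq1$) is the weight domination $\sum_\ell|a_{n,\mu,\ell}|\,|A_{n,\mu}|^{1/p-1/2}\lesssim|A_{n,\mu_0}|^{1/p-1/2}$, whence $\|\psi_{A_n}\|_p\simeq|A_{n,\mu_0}|^{1/p-1/2}$, the renormalized coefficients satisfy $\sum_\ell|b_{n,\mu,\ell}|\lesssim1$ with $\simeq1$ at $\mu_0$, and democracy follows by quoting the unconditionality and democracy of the binary system $(\psi_{n,\mu,\ell}^p)$ (upper bound by the triangle inequality over the boundedly many $\mu$, lower bound by restricting to $\mu_0$) rather than by redoing the nested-atoms integral; note also that the full atoms $A_n$ need not decay geometrically along a nested chain, so your final integral estimate would fail as written.
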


Statement (2) in particular gives that $(O_1)$ implies unconditionality of $(\psi_{A_{n}})$ in $L^p$.

\begin{proof}
	First consider statement (1). If $(\psi_{A_n})$ satisfies \eqref{eq:uncond_plus}, we 
	have in particular that there exists a positive  constant $C$ such that for all $n$ and all $f\in L^p$
	\[
		|\langle f,\psi_{A_n}\rangle|\cdot \|\psi_{A_n}\|_p \leq C\|f\|_p.
	\]
	This condition is equivalent to the fact that for all $n$
	\[
		\|\psi_{A_n}\|_{p'}	\cdot\|\psi_{A_n}\|_p \leq C,
	\]
	with $p' = p/(p-1)$ and the same positive  constant $C$.
	Inserting \eqref{eq:psiequiv} in this inequality implies condition $(O_p)$. 

	Next we prove statement (2). 
	Observe that for fixed $n$ and $f\in L^p$ we have that 
	\[
		\langle f,\psi_{A_n}\rangle \psi_{A_n} 	= \sum_{\mu,\nu=1}^{r-1} \sum_{\ell,m} \overline{a_{n,\mu,\ell}} a_{n,\nu,m}  \langle f,\psi_{n,\mu,\ell}\rangle \psi_{n,\nu,m}.
	\]
	Using the square functions, the fact that $(\psi_{n,j}') = (\psi_{n,\mu,\ell})$
	satisfies estimate \eqref{eq:uncond_plus_est}  and Lemma~\ref{lem:diff_square}, it is enough to show
	 the following estimate for establishing \eqref{eq:uncond_plus}: 
	\begin{equation}\label{eq:uncond_more_than_two}
	\begin{aligned}
		\bigg\| \bigg( \sum_n\sum_{\nu=1}^{r-1}\sum_m  \Big| \sum_{\mu=1}^{r-1} &\sum_{\ell}\pm \overline{a_{n,\mu,\ell}} a_{n,\nu,m}\langle f,\psi_{n,\mu,\ell}\rangle \Big|^2 \frac{\charfun_{A_{n,\nu}}}{|A_{n,\nu}|}	\bigg)^{1/2} \bigg\|_p  \\
		&\qquad \lesssim \bigg\|\bigg(\sum_n \sum_{\mu=1}^{r-1}  \sum_\ell |\langle f,\psi_{n,\mu,\ell}\rangle|^2
		\frac{\charfun_{A_{n,\mu}}}{|A_{n,\mu}|} \bigg)^{1/2} \bigg\|_p.
	\end{aligned}
	\end{equation}
	To this end, we investigate, for fixed $n$, fixed $\mu,\nu = 1,\ldots, r-1$ and fixed $\ell,m$ the 
	expression 
	\[
		|a_{n,\mu,\ell} a_{n,\nu,m}|^2 	\frac{\charfun_{A_{n,\nu}}}{|A_{n,\nu}|}
	\]
	and consider the trivial $\sigma$-algebra $\mathscr G =\mathscr G_{n,\mu,\nu}= \{\emptyset, A_{n,\mu}\cup A_{n,\nu}\}$ on $ A_{n,\mu}\cup A_{n,\nu}$.
	We observe 
	\[
		\mathbb E_{\mathscr G} \charfun_{A_{n,\mu}} = \frac{|A_{n,\mu}|}{|A_{n,\mu}| + |A_{n,\nu}|} \charfun_{A_{n,\mu}\cup A_{n,\nu}}
	\]
	and thus we write 
	\[
		\charfun_{A_{n,\mu}\cup A_{n,\nu}}	 = \charfun_{A_{n,\mu}\cup A_{n,\nu}}^2 = 
		\frac{ (|A_{n,\mu}| + |A_{n,\nu}|)^2}{|A_{n,\mu}|} \Big( \mathbb E_{\mathscr G} \frac{\charfun_{A_{n,\mu}}}{|A_{n,\mu}|^{1/2}}\Big)^2.
	\]
	Then we estimate 
	\begin{align*}
		|a_{n,\mu,\ell} a_{n,\nu,m}|^2 	\frac{\charfun_{A_{n,\nu}}}{|A_{n,\nu}|}  &= 
			|a_{n,\mu,\ell} a_{n,\nu,m}|^2 	\Big( \frac{\charfun_{A_{n,\nu}}}{|A_{n,\nu}|^{1/2}}\Big)^2
			\leq |a_{n,\mu,\ell} a_{n,\nu,m}|^2 	\Big( \frac{\charfun_{A_{n,\mu}\cup A_{n,\nu}}}{|A_{n,\nu}|^{1/2}}\Big)^2 \\
			& = 
			|a_{n,\mu,\ell} a_{n,\nu,m}|^2 	\frac{ (|A_{n,\mu}| + |A_{n,\nu}|)^2}{|A_{n,\mu}| |A_{n,\nu}|} 
			\Big(\mathbb E_{\mathscr G} \frac{\charfun_{A_{n,\mu}}}{|A_{n,\mu}|^{1/2}} \Big)^2  \\
			&\leq 4C^2 	\Big(\mathbb E_{\mathscr G} \frac{\charfun_{A_{n,\mu}}}{|A_{n,\mu}|^{1/2}} \Big)^2,
	\end{align*}
	where the last inequality follows from $(O_1)$.
	Inserting this inequality in the left-hand side of inequality \eqref{eq:uncond_more_than_two}
	and using Stein's martingale inequality \eqref{eq:steins_inequality} for the $\sigma$-algebras 
	$\mathscr G_{n,\mu,\nu}$ gives the right-hand side of \eqref{eq:uncond_more_than_two}.
	This finishes the proof of statement (2).

	In order to prove statement (3), we first show that under condition $(O_p)$, we have the inequality 
	\begin{equation}\label{eq:demo_more_than_two}
		\sum_\ell |a_{n,\mu,\ell}|  |A_{n,\mu}|^{1/p - 1/2} \lesssim  	 |A_{n,\mu_0}|^{1/p - 1/2}
	\end{equation}
	for all $\mu = 1,\ldots, r-1$,
		where $\mu_0=\mu_0(n)\in \{1,\ldots,r-1\}$ is chosen so that 
		$ \sum_\ell |a_{n,\mu_0,\ell}| = \max_{\mu=1,\ldots,r-1}\sum_\ell |a_{n,\mu,\ell}|$.
		Without restriction, assume that $p\leq 2$ so that $1/p - 1/2 \geq 0$.
	Observe that due to the assumption $\sum_{\mu=1}^{r-1}	\sum_\ell |a_{n,\mu,\ell}|^2=1$ 
	we have $\sum_\ell |a_{n,\mu_0,\ell}| \simeq 1$.
	We distinguish two cases for the value of $\mu$.

	\textsc{Case 1:  } $\mu\leq \mu_0$. In this case we have $|A_{n,\mu}| \leq |A_{n,\mu_0}|$ and 
	condition $(O_p)$ implies
	\[
		\sum_{\ell} |a_{n,\mu,\ell}| |A_{n,\mu}|^{1/p - 1/2} \lesssim  
		\frac{|A_{n,\mu}|^{2(1/p - 1/2)}}{|A_{n,\mu_0}|^{1/p - 1/2}} \leq |A_{n,\mu}|^{1/p - 1/2}\leq |A_{n,\mu_0}|^{1/p - 1/2}.
	\]
	proving \eqref{eq:demo_more_than_two} for $\mu\leq \mu_0$. 

	\textsc{Case 2: } $\mu > \mu_0$. In this case we have $|A_{n,\mu_0}| \leq |A_{n,\mu}|$
	and condition $(O_p)$ implies 
	\begin{align*}
		\sum_\ell |a_{n,\mu,\ell}| |A_{n,\mu}|^{1/p - 1/2} 
		\lesssim 
		|A_{n,\mu_0}|^{1/p-1/2},
	\end{align*}
	proving \eqref{eq:demo_more_than_two} also for $\mu>\mu_0$.

	Therefore, by \eqref{eq:psiequiv} and \eqref{eq:demo_more_than_two},
	\begin{equation}\label{eq:psi_A_n}
		 \| \psi_{A_n} \|_p \simeq \sum_{\mu=1}^{r-1} |A_{n,\mu}|^{1/p - 1/2} \sum_\ell |a_{n,\mu,\ell}| \simeq |A_{n,\mu_0}|^{1/p - 1/2}.
	\end{equation}
	Now, we write 
	\[
		\psi_{A_n}^p = \sum_{\mu=1}^{r-1} \sum_\ell  b_{n,\mu,\ell} \psi_{n,\mu,\ell}^p 		
	\]
	with 
	\[
		b_{n,\mu,\ell} = a_{n,\mu,\ell} \frac{\|\psi_{n,\mu,\ell}\|_p }{\|\psi_{A_n}\|_p}		
		\qquad \text{and} \qquad \psi_{n,\mu,\ell}^p = \frac{\psi_{n,\mu,\ell}}{\|\psi_{n,\mu,\ell}\|_p}.
	\]
	Note that for all $\mu = 1,\ldots,r-1$, 
	\[
		\sum_\ell |b_{n,\mu,\ell}| 	= \sum_\ell |a_{n,\mu,\ell}| \frac{\|\psi_{n,\mu,\ell}\|_p}{\|\psi_{A_n}\|_p}	
		\simeq \sum_\ell |a_{n,\mu,\ell}| \frac{|A_{n,\mu}|^{1/p - 1/2}}{|A_{n,\mu_0}|^{1/p - 1/2}}
	\]
	by \eqref{eq:psi_A_n}.
	Therefore, \eqref{eq:demo_more_than_two} implies 
		$\sum_\ell |b_{n,\mu,\ell}| \lesssim 1$
	for all $\mu=1,\ldots,r-1$ and $\sum_\ell |b_{n,\mu_0,\ell}|\simeq 1$.
	Let now $\Lambda$ be a finite set of positive integers. Then, this bound on the 
	numbers $(b_{n,\mu,\ell})$ and the unconditionality and democracy of the functions 
	$(\psi_{n,\mu,\ell}^p)$ gives us 
	\[
		\big\| \sum_{n\in\Lambda} \psi_{A_n}^p \big\|_p = 
		\Big\| \sum_{n\in\Lambda} \sum_{\mu=1}^{r-1} \sum_\ell b_{n,\mu,\ell} \psi_{n,\mu,\ell}^p \Big\|_p
		\gtrsim \Big\| \sum_{n\in\Lambda}  \sum_\ell |b_{n,\mu_0,\ell}| \psi_{n,\mu_0,\ell}^p \Big\|_p
		\gtrsim (\card \Lambda)^{1/p}.
	\] 
	Similarly, the converse inequality $\| \sum_{n\in\Lambda} \psi_{A_n}^p\|_p \lesssim (\card \Lambda)^{1/p}$
	is a consequence of $\sum_\ell |b_{n,\mu,\ell}| \lesssim 1$, the unconditionality 
	and democracy of the functions $(\psi_{n,\mu,\ell}^p)$, where we also use that 
	the number $r=r(n)$ is uniformly bounded in $n$ and the number of terms 
	in the sum over $\ell$ is bounded by the dimension of the space $S$.
\end{proof}

\begin{prop}\label{prop:suff_cond_orth}
	Suppose that  the functions $\psi_{A_n}=\sum_{\mu=1}^{r-1} \sum_\ell  a_{n,\mu,\ell} \psi_{n,\mu,\ell}$
satisfy the pointwise inequality 
	\[
		|\psi_{A_n}| \lesssim \frac{ |A_{n,1}|^{1/2}}{|A_{n,\kappa}|} \qquad\text{on } A_{n,\kappa}
	\]
	for each $\kappa=1,\ldots, r$.

	Then, for each $1<p<\infty$, the system $(\psi_{A_n})$ satisfies \eqref{eq:uncond_plus} and 
	the renormalized system $(\psi_{A_n}^p)$ is democratic in $L^p$.
\end{prop}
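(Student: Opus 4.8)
The plan is to obtain both assertions directly from Theorem~\ref{thm:uncond_more_than_two} by checking that the system $(\psi_{A_n})$ satisfies condition $(O_1)$; the pointwise hypothesis has been formulated precisely so that this holds. Granting $(O_1)$, statement (2) of Theorem~\ref{thm:uncond_more_than_two} immediately yields estimate \eqref{eq:uncond_plus} for every $1<p<\infty$, which is the first assertion. For the second one, recall the remarks following the definition of $(O_p)$: since $1\in[1,2)$, condition $(O_1)$ propagates to $(O_q)$ for every $q\in(1,2]$, and by the equivalence $(O_q)\Leftrightarrow(O_{q'})$ with $q'=q/(q-1)$ then to every $q\in[1,\infty]$; in particular $(O_p)$ holds for the given $p$, and statement (3) of Theorem~\ref{thm:uncond_more_than_two} gives that the renormalized system $(\psi_{A_n}^p)$ is democratic in $L^p$. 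So everything reduces to verifying $(O_1)$.

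To check $(O_1)$, I would first extract two consequences of the hypothesis. Since $\psi_{A_n}$ is supported in $A_n=A_{n,1}\cup\cdots\cup A_{n,r}$, summing the bound $|\psi_{A_n}|\lesssim|A_{n,1}|^{1/2}/|A_{n,\kappa}|$ on $A_{n,\kappa}$ over $\kappa$, using $|A_{n,\kappa}|\ge|A_{n,1}|$ and that $r=r(n)$ is uniformly bounded, gives
\[
\|\psi_{A_n}\|_1=\sum_{\kappa=1}^r\int_{A_{n,\kappa}}|\psi_{A_n}|\dif\mathbb P\lesssim\sum_{\kappa=1}^r|A_{n,\kappa}|\cdot\frac{|A_{n,1}|^{1/2}}{|A_{n,\kappa}|}\lesssim|A_{n,1}|^{1/2}.
\]
Next, since $(\psi_{n,\mu,\ell})$ is an orthonormal system, $a_{n,\mu,\ell}=\langle\psi_{A_n},\psi_{n,\mu,\ell}\rangle$, and by \eqref{eq:est_psi_more_than_two} together with the inclusion $A_{n,\mu}\subseteq A_n$ (which turns the estimate $|A_{n,\mu}|^{1/2}/|A_n|$ on $A_{n,\mu+1}\cup\cdots\cup A_{n,r}$ into $|A_{n,\mu}|^{-1/2}$), we have $\|\psi_{n,\mu,\ell}\|_\infty\lesssim|A_{n,\mu}|^{-1/2}$ for $\mu=1,\ldots,r-1$. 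Hölder's inequality therefore gives
\[
|a_{n,\mu,\ell}|\le\|\psi_{A_n}\|_1\,\|\psi_{n,\mu,\ell}\|_\infty\lesssim\Big(\frac{|A_{n,1}|}{|A_{n,\mu}|}\Big)^{1/2},
\]
and, since for each $\mu$ there are at most $\dim S$ indices $\ell$, also $\sum_\ell|a_{n,\mu,\ell}|\lesssim(|A_{n,1}|/|A_{n,\mu}|)^{1/2}$. Inserting this into the left-hand side of $(O_1)$ yields, for all $\mu,\nu\in\{1,\ldots,r-1\}$,
\[
\sum_\ell|a_{n,\mu,\ell}|\sum_m|a_{n,\nu,m}|\,|A_{n,\mu}|^{1/2}|A_{n,\nu}|^{-1/2}\lesssim\frac{|A_{n,1}|^{1/2}}{|A_{n,\mu}|^{1/2}}\cdot\frac{|A_{n,1}|^{1/2}}{|A_{n,\nu}|^{1/2}}\cdot\frac{|A_{n,\mu}|^{1/2}}{|A_{n,\nu}|^{1/2}}=\frac{|A_{n,1}|}{|A_{n,\nu}|}\le1,
\]
which is exactly condition $(O_1)$ and thus completes the argument.

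I do not anticipate a genuine obstacle: the content lies entirely in Theorem~\ref{thm:uncond_more_than_two}, and the only new ingredient, condition $(O_1)$, follows from the short coefficient estimate above. The two points needing a little care are absorbing the uniformly bounded number $r(n)$ of atoms into the implied constants, and using $A_{n,\mu}\subseteq A_n$ to replace the mixed bound $|A_{n,\mu}|^{1/2}/|A_n|$ coming from \eqref{eq:est_psi_more_than_two} by the clean uniform bound $|A_{n,\mu}|^{-1/2}$ on $\|\psi_{n,\mu,\ell}\|_\infty$; both are straightforward.
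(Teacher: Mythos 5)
Your proposal is correct and follows essentially the same route as the paper: both verify condition $(O_1)$ via the key coefficient bound $|a_{n,\mu,\ell}|\lesssim(|A_{n,1}|/|A_{n,\mu}|)^{1/2}$ and then invoke Theorem~\ref{thm:uncond_more_than_two} (together with the propagation of $(O_1)$ to $(O_p)$ for all $p$, which you make explicit and the paper leaves implicit). The only cosmetic difference is that you package the coefficient estimate as a H\"older pairing $\|\psi_{A_n}\|_1\,\|\psi_{n,\mu,\ell}\|_\infty$, whereas the paper integrates $|\psi_{A_n}|\,|\psi_{n,\mu,\ell}|$ atom by atom; the underlying pointwise bounds and the resulting estimate are identical.
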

\begin{proof}
We show condition $(O_1)$, i.e., we show 
	for each $\mu,\nu=1,\ldots,r-1$ the inequality
	\begin{equation}\label{eq:to_prove}
		\sum_{\ell} |a_{n,\mu,\ell}|	\sum_{m} |a_{n,\nu,m}| |A_{n,\mu}|^{1/2} 
		|A_{n,\nu}|^{-1/2} \leq C,
	\end{equation}
	for some positive  constant $C$ independent of $n$.
	For each $\mu=1,\ldots,r-1$ and each $\ell$, we observe that 
	the pointwise estimates for $\psi_{A_n}$ in the assumptions and the pointwise estimates 
	\eqref{eq:est_psi_more_than_two} for $\psi_{n,\mu,\ell}$ imply 
	\begin{align*}
		|a_{n,\mu,\ell}| &= |\langle \psi_{A_n}, \psi_{n,\mu,\ell}\rangle|	
		\leq \sum_{\kappa = \mu}^r  \int_{A_{n,\kappa}} |\psi_{A_n}|\cdot |\psi_{n,\mu,\ell}| \\
		& \lesssim |A_{n,\mu}| \cdot \frac{|A_{n,1}|^{1/2}}{|A_{n,\mu}|} \cdot |A_{n,\mu}|^{-1/2} + 
		\sum_{\kappa=\mu+1}^{r} |A_{n,\kappa}| \cdot \frac{|A_{n,1}|^{1/2}}{|A_{n,\kappa}|}\cdot \frac{|A_{n,\mu}|^{1/2}}{|A_n|} \\
		& =  \frac{|A_{n,1}|^{1/2}}{|A_{n,\mu}|^{1/2}} +  \frac{(r-\mu)\cdot |A_{n,1}|^{1/2} |A_{n,\mu}|^{1/2}}{|A_n|} \lesssim 
		\frac{|A_{n,1}|^{1/2}}{|A_{n,\mu}|^{1/2}}.
	\end{align*}
	Inserting this inequality in the left-hand side of \eqref{eq:to_prove}, we obtain \eqref{eq:to_prove}
	since $|A_{n,1}|\leq |A_{n,\mu}|$ for all $\mu=1,\ldots, r-1$.
	Thus, Theorem~\ref{thm:uncond_more_than_two}
	now implies that \eqref{eq:uncond_plus} is true for $(\psi_{A_n})$ and $(\psi_{A_n}^p)$ is democratic in $L^p$.
\end{proof}

We now use this result to give an explicit example of an orthonormal system of functions that 
does not fall in the category of functions of the form $(\psi_{n,j}')$ as considered 
above, but is still unconditional and democratic in $L^p$ for $1<p<\infty$.
The example we want to consider consists of local tensor products of functions.
Assume that we are given probability spaces $(\Omega_\delta,\mathscr F^\delta,\mathbb P^\delta)$
for $\delta = 1,\ldots, d$. Assume that $(\mathscr F_n)$ is a filtration on $\Omega_1\times \cdots\times \Omega_d$
with 
$\mathscr F_0 = \{\emptyset,\Omega_1\times\cdots\times \Omega_d\}$ and $\mathscr F_{n}$ is 
generated by $\mathscr F_{n-1}$ and the partition of one atom $A_n=A_{n}^1\times\cdots\times A_n^d$ of $\mathscr F_{n-1}$
into the atoms $A_{n,m_1}^1\times\cdots\times  A_{n,m_d}^d$
with $m_\delta\in \{1,2\}$ for each $\delta = 1,\ldots,d$ and the same ordering 
\[
	|A^\delta_{n,1}| \leq |A^\delta_{n,2}|,\qquad \delta = 1,\ldots,d
\]
as above in each direction $\delta$.
Denote by $\mathscr A_n$ the atoms of $\mathscr F_n$ and  $\mathscr A = \cup_n \mathscr A_n$.
Let $p_\delta : \Omega_1\times \cdots\times \Omega_d \to \Omega_\delta$ be the projection onto 
the $\delta$-th coordinate. Then, let $\mathscr G^\delta := p_\delta(\mathscr A)$ be the collection of 
subsets of $\Omega_\delta$ that appear as the $\delta$-th coordinate projection of some atom $A\in\mathscr A$.
Moreover, assume that we are given finite-dimensional 
subspaces  $S^\delta \subset L^\infty(\Omega_\delta)$ that satisfy \eqref{eq:L1Linfty} for each $G_\delta\in\mathscr G^\delta$, i.e.,
\begin{equation}\label{eq:L1Linfty_delta}
			|\{ \omega_\delta\in G_\delta : |f(\omega_\delta)| \geq c_1\|f\|_{G_\delta}
		\}| \geq c_2|G_\delta|,\qquad f\in S^\delta.
\end{equation}
Then we consider the tensor product space $S := S^1 \otimes \cdots \otimes S^d$ on $\Omega_1\times \cdots \times \Omega_d$.
We now show that $S$ satisfies \eqref{eq:L1Linfty} for some constants $c_1',c_2'\in(0,1]$ and 
for every $G = G_1\times \cdots \times G_d$ with $G_\delta\in\mathscr G^\delta$, i.e.,
\begin{equation}\label{eq:L1Linfty_tensor}
			|\{ \omega\in G : |f(\omega)| \geq c_1'\|f\|_{G}
		\}| \geq c_2'|G|,\qquad f\in S.
\end{equation}
It is enough to show that for each $G$ of this form and each $f\in S$, the mean value of $|f|$ on $G$
is comparable to the $L^\infty$-norm of $f$ on $G$. Let $d\geq 2$ and $f \in S$. Let $x\in G_1\times \cdots\times G_{d-1}$ 
and $y\in G_d$ and let $f_x(y) = f_y(x) = f(x,y)$. The latter functions are measurable in the 
respective measure spaces. Then, using the notation $G' = G_1\times \cdots \times G_{d-1}$,  we calculate by Fubini's theorem
and the fact that on the coordinate projection $G_d$, mean values of positive functions in $S^d$ are 
comparable to their $L^\infty$-norms by \eqref{eq:L1Linfty_delta},
\begin{align*}
\frac{1}{|G|} \int_G |f(x,y)| \dif (x,y) &= \frac{1}{|G'|}\int_{G'} \frac{1}{|G_d|} \int_{G_d} |f(x,y)|\dif y \dif x \\
&\gtrsim  \frac{1}{|G'|}\int_{G'} \|f_x\|_{G_d} \dif x \geq \Big\| \frac{1}{|G'|}\int_{G'} |f_y|\dif x \Big\|_{G_d}.
\end{align*}
By induction, this implies that the mean of $|f|$ on $G$ is comparable to the $L^\infty$-norm of $f$ on $G$.
Thus, we have established \eqref{eq:L1Linfty_tensor} for some constants $c_1',c_2'\in(0,1]$.

Then we get, for each $n$ and in each direction $\delta=1,\ldots,d$  
an orthonormal basis 
$(\psi_{n,0,j_\delta}^\delta)_{j_\delta}$ of $S^\delta(A_n^\delta)$
and an orthonormal basis  
$(\psi_{n,1,j_\delta}^\delta)_{j_\delta}$ of the orthogonal complement of $S^\delta(A_n^\delta)$
in $S^\delta(A_{n,1}^\delta)\oplus S^\delta(A_{n,2}^\delta)$.
Then we consider the orthonormal system $(\psi_{n,\mu,j})$ for indices $\mu=(\mu_1,\ldots,\mu_d)$ and 
$j=(j_1,\ldots,j_d)$ with $\mu\neq 0$ consisting of the functions 
\begin{equation}\label{eq:psi_tensor}
	\psi_{n,\mu,j} = \psi_{n,\mu_1,j_1}^1 \otimes\cdots\otimes \psi_{n,\mu_d,j_d}^d.	
\end{equation}
For fixed $n$, those functions are orthogonal to the space $S^1(A_n^1)\otimes\cdots\otimes S^d(A_n^d)$.

We use Lemma~\ref{lem:ess_estimate}
to obtain the following estimate for every $\delta=1,\ldots,d$ 
and every $j_\delta$,
\begin{equation}
		|\psi_{n,1,j_\delta}^\delta| \lesssim \begin{cases}
			0,& \text{on } (A_{n}^\delta)^c, \\
			|A_{n,1}^\delta|^{-1/2},& \text{on } A_{n,1}^\delta, \\
			|A_{n,1}^\delta|^{1/2}/|A_{n,2}^\delta|,& \text{on } A_{n,2}^\delta.
		\end{cases}
	\end{equation}
	In the case $\mu_\delta=0$ we have the estimate 
	\[
		|\psi_{n,0,j_\delta}^\delta| \lesssim |A_n^\delta|^{-1/2}\qquad\text{on } A_n^\delta,
	\]
	which is a consequence of \eqref{eq:L1Linfty_delta}.
	Assume $\mu_1 = \cdots = \mu_s = 0$ and $\mu_{s+1} = \cdots = \mu_d = 1$ and $m_{s+1} = \cdots m_t = 1$, $m_{t+1} = \cdots = m_d =2$.
	Then, we estimate on the set  $A_n^1 \times \cdots A_n^s\times A_{n,m_{s+1}}^{s+1} \times \cdots \times A_{n,m_{d}}^d$
	as follows: 
	\begin{equation}\label{eq:pw_tensor}
	\begin{aligned}
		|\psi_{n,\mu,j}| &\lesssim (|A_n^1|\cdots|A_n^s| |A_{n,1}^{s+1}| \cdots |A_{n,1}^t| )^{-1/2} 
		\Big( \frac{|A_{n,1}^{t+1}|^{1/2}}{|A_{n,2}^{t+1}|} \cdots \frac{|A_{n,1}^{d}|^{1/2}}{|A_{n,2}^{d}|} \Big) \\
		& = \frac{|A_n^1 \times \cdots \times A_n^s \times A_{n,1}^{s+1} \times \cdots \times A_{n,1}^d|^{1/2} }
				{|A_{n}^1\times \cdots\times A_n^s \times A_{n,1}^{s+1}\times \cdots \times A_{n,1}^t \times A_{n,2}^{t+1}\times \cdots \times A_{n,2}^d|}.
	\end{aligned}
\end{equation}
For each fixed value of $\mu \in \{0,1\}^d\setminus\{0\}$ and $j$ we now define a filtration on $\Omega_1\times \cdots \times \Omega_d$
so that we can use the pointwise estimate \eqref{eq:pw_tensor} to show the assumption of Proposition~\ref{prop:suff_cond_orth}
for the sequence of functions $(\psi_{n,\mu,j})_n$ defined in \eqref{eq:psi_tensor}. 
Up to permutation of variables, we assume without restriction that $\mu_1 = \cdots = \mu_s = 0$ and $\mu_{s+1} = \cdots = \mu_d = 1$
for some $s\in\{1,\ldots,d\}$.
Then we introduce the intermediate $\sigma$-algebra $\mathscr F_n^\mu$ satisfying $\mathscr F_{n-1} \subseteq \mathscr F_n^\mu \subseteq \mathscr F_n$ 
by defining the atoms of $\mathscr F_{n}^\mu$ to be the sets 
\[
	A_n^1\times \cdots \times A_n^s	\times A_{n,m_{s+1}}^{s+1} \times \cdots \times A_{n,m_d}^{d},\qquad (m_{s+1},\ldots,m_d)\in \{1,2\}^{d-s}.
\]
Then, the function $\psi_{n,\mu,j}$ is contained in the orthogonal complement of $S(\mathscr F_{n-1})$
in $S(\mathscr F_{n}^\mu)$. Consider the filtration $(\widetilde{\mathscr F}_n)$ consisting of the $\sigma$-algebras $(\mathscr F_n)_n$ and $(\mathscr F_n^\mu)_n$
and let $\widetilde{\mathscr A}$ be the collection of atoms of $(\widetilde{\mathscr F}_n)$.
We showed above that \eqref{eq:L1Linfty} is satisfied for every atom $A\in \widetilde{\mathscr A}$ and every function $f\in S$.
Observe also that estimate \eqref{eq:pw_tensor} implies the assumption of Proposition~\ref{prop:suff_cond_orth} 
for the filtration $(\widetilde{\mathscr F}_n)$.
Therefore, we can use our theory, in particular Proposition~\ref{prop:suff_cond_orth}, to deduce that 
for every $f\in L^p$, we have the estimate 
\[
\Big\| \sum_n \pm\langle f,\psi_{n,\mu,j}\rangle \psi_{n,\mu,j}\Big\|_p	\leq C\| f\|_p,
\]
where the positive  constant $C$ depends on $p$ and on $\mu,j$, but not on $f$. Since there are only 
uniformly finitely many values of $\mu,j$, we deduce that the orthonormal system $(\psi_{n,\mu,j})_{n,\mu,j}$ 
is unconditional in $L^p$.

Additionally, Proposition~\ref{prop:suff_cond_orth} implies that for each $\mu,j$, the renormalized system 
$(\psi_{n,\mu,j}^p)_n$ is democratic in $L^p$. Choose a finite set $\Lambda$ of values $(n,\mu,j)$.
Since $(\psi_{n,\mu,j}^p)_{n,\mu,j}$ is unconditional in $L^p$, we get 
\begin{align*}
	\Big\| \sum_{(n,\mu,j)\in\Lambda} \psi_{n,\mu,j}^p\Big\|_p \simeq \Big\| \Big(\sum_{(n,\mu,j)\in \Lambda} (\psi_{n,\mu,j}^p)^2\Big)^{1/2} \Big\|_p
	\simeq \Big\| \sum_{\mu,j}\Big(\sum_{n: (n,\mu,j)\in\Lambda} (\psi_{n,\mu,j}^p)^2\Big)^{1/2} \Big\|_p,
\end{align*}
where the latter equivalence is a consequence of the fact that the number of indices $(\mu,j)$ is 
uniformly bounded in $n$. Thus, there exist values $(\mu_0,j_0)$ such that we have 
\[
	\Big\| \sum_{(n,\mu,j)\in\Lambda} \psi_{n,\mu,j}^p\Big\|_p \simeq 
\Big\| \Big(\sum_{n: (n,\mu_0,j_0)\in\Lambda} (\psi_{n,\mu_0,j_0}^p)^2\Big)^{1/2} \Big\|_p\simeq (\card \Lambda)^{1/p},
\]
implying the democracy of the system $(\psi_{n,\mu,j}^p)_{n,\mu,j}$.

\subsection*{Acknowledgments}
	M. Passenbrunner is supported by the Austrian Science Fund FWF, project P32342.
The authors would like to thank the anonymous referees for carefully reading the paper and for their remarks.

\bibliographystyle{plain}
\bibliography{local}

\begin{thebibliography}{10}

\bibitem{Bojanov1993}
B.~Bojanov.
\newblock Elementary proof of the {R}emez inequality.
\newblock {\em Amer. Math. Monthly}, 100(5):483--485, 1993.

\bibitem{BrudnyiGanzburg1973}
Ju.~A. Brudny{\u\i} and M.~I. Ganzburg.
\newblock A certain extremal problem for polynomials in {$n$} variables.
\newblock {\em Izv. Akad. Nauk SSSR Ser. Mat.}, 37:344--355, 1973.

\bibitem{Fontes-Merz2006}
N.~Fontes-Merz.
\newblock A multidimensional version of {T}ur\'{a}n's lemma.
\newblock {\em J. Approx. Theory}, 140(1):27--30, 2006.

\bibitem{Ga2001}
M.~I. Ganzburg.
\newblock Polynomial inequalities on measurable sets and their applications.
\newblock {\em Constr. Approx.}, 17(2):275, 2001.

\bibitem{Garsia1970}
A.~M. Garsia.
\newblock {\em Topics in almost everywhere convergence}, volume No. 4 of {\em
  Lectures in Advanced Mathematics}.
\newblock Markham Publishing Co., Chicago, IL, 1970.

\bibitem{GevorkyanKamont2004}
G.~G. Gevorkyan and A.~Kamont.
\newblock Unconditionality of general {F}ranklin systems in {$L^p[0,1],
  1<p<\infty$}.
\newblock {\em Studia Math.}, 164(2):161--204, 2004.

\bibitem{part2}
J.~Gulgowski, A.~Kamont, and M.~Passenbrunner.
\newblock Properties of local orthonormal systems, {P}art {II}: {G}eometric
  characterization of {B}ernstein inequalities.
\newblock {\em arXiv:2304.05647}, 2023.

\bibitem{ak.2001}
A.~Kamont.
\newblock General {H}aar systems and greedy approximation.
\newblock {\em Studia Math.}, 145(2):165--184, 2001.

\bibitem{KonyaginTemlyakov1999}
S.~V. Konyagin and V.~N. Temlyakov.
\newblock A remark on greedy approximation in {B}anach spaces.
\newblock {\em East J. Approx.}, 5(3):365--379, 1999.

\bibitem{MP2020}
P.~F.~X. M\"{u}ller and M.~Passenbrunner.
\newblock Almost everywhere convergence of spline sequences.
\newblock {\em Israel J. Math.}, 240(1):149--177, 2020.

\bibitem{Nazarov1993}
F.~L. Nazarov.
\newblock Local estimates for exponential polynomials and their applications to
  inequalities of the uncertainty principle type.
\newblock {\em Algebra i Analiz}, 5(4):3--66, 1993.

\bibitem{Passenbrunner2014}
M.~Passenbrunner.
\newblock Unconditionality of orthogonal spline systems in {$L^p$}.
\newblock {\em Studia Math.}, 222(1):51--86, 2014.

\bibitem{pp.2003.a}
P.~Petrushev.
\newblock Multivariate {$n$}-term rational and piecewise polynomial
  approximation.
\newblock {\em J. Approx. Theory}, 121(1):158--197, 2003.

\bibitem{Pisier2016}
G.~Pisier.
\newblock {\em Martingales in {B}anach spaces}, volume 155 of {\em Cambridge
  Studies in Advanced Mathematics}.
\newblock Cambridge University Press, Cambridge, 2016.

\bibitem{Stein1970a}
E.~M. Stein.
\newblock {\em Topics in harmonic analysis related to the {L}ittlewood-{P}aley
  theory}.
\newblock Annals of Mathematics Studies, No. 63. Princeton University Press,
  Princeton, N.J.; University of Tokyo Press, Tokyo, 1970.

\bibitem{Temlyakov2011}
V.~Temlyakov.
\newblock {\em Greedy approximation}, volume~20 of {\em Cambridge Monographs on
  Applied and Computational Mathematics}.
\newblock Cambridge University Press, Cambridge, 2011.

\end{thebibliography}

\end{document}